\theoremstyle{plain}
\newtheorem*{thm*}{Theorem}
\newtheorem{thm}{Theorem}
\newtheorem{lemma}{Lemma}[section]
\newtheorem{prop}[lemma]{Proposition}
\newtheorem{claim}[lemma]{Claim}
\newtheorem{cor}[lemma]{Corollary}
\theoremstyle{definition}
\newtheorem{defn}[lemma]{Definition}
\newtheorem{rem}[lemma]{Remark}
\newcommand{\Qed}{\ \hfill \qedsymbol \bigskip}
\title{\textbf{New steps in $C^0$ symplectic and contact geometry of smooth submanifolds}}
\author{\textbf{Maksim Stoki\'c}}
\begin{document}

\maketitle

\begin{abstract}
    We provide a $C^0$ counterexample to the Lagrangian Arnold conjecture in the cotangent bundle of a closed manifold. Additionally, we prove a quantitative $h$-principle for subcritical isotropic embeddings in contact manifolds, and provide an explicit construction of a contact homeomorphism which takes a subcritical isotropic curve to a transverse one. On the rigid side, we give another proof of the Dimitroglou Rizell and Sullivan theorem \cite{RS22} which states that Legendrian knots are preserved by contact homeomorphisms, provided their image is smooth. Moreover, our method gives related examples of rigidity in higher dimensions as well.
\end{abstract}

\tableofcontents

\section{Introduction}

The celebrated Eliashberg-Gromov rigidity theorem states that a diffeomorphism which is a $C^0$-limit of symplectomorphisms is itself symplectic. Motivated by this, symplectic homeomorphisms are defined as $C^0$-limits of symplectomorphisms. The contact version of Eliashberg-Gromov theorem holds as well (see \cite{MuSp14},\cite{Mu19}), thus we can analogously define contact homeomorphisms as $C^0$-limits of contactomorphisms.\\

A powerful tool for proving flexibility statements in $C^0$ symplectic geometry is the \textit{quantitative $h$-principle} introduced first for symplectic 2-discs in \cite{BO16}, and more recently for subcritical isotropic discs in \cite{BO21}. However, contact quantitative $h$-principle has not been established previously. In this paper we show that a contact version of a quantitative $h$-principle for subcritical isotropic embeddings holds as well. A submanifold $\Lambda\subset Y$ of a contact manifold $(Y,\xi)$ is called \textit{isotropic} if $T\Lambda\subset\xi|_{\Lambda}$. Isotropic submanifolds are important objects in contact geometry. The dimension of an isotropic submanifold of an $(2n+1)$-dimensional contact manifold is always not greater than $n$. If the dimension is strictly less than $n$, the isotropic submanifold is called \textit{subctitical}, and otherwise it is called \textit{Legendrian}.

\begin{thm}[Quantitative $h$-principle for subcritical isotropic embeddings]\label{quant-h-princ-discs}
Let $V\subset(\mathbb{R}^{2n+1},\mathrm{ker}\,dz-\sum_{i=1}^ny_idx_i)$ be an open contact submanifold, and let $k<n$.
\begin{enumerate}[label=(\alph*)]\label{Theorem1}
    \item Let $u_0,u_1:D^k\rightarrow V$ be isotropic embeddings of the closed disc. Assume there exists a homotopy $F:D^k\times [0,1]\rightarrow V$ between $u_0$ and $u_1$ of size less than $\varepsilon$ ($\mathrm{diam}\,F(\{z\}\times[0,1])<\varepsilon$ for all $z\in D^k$). Then there exists a contact isotopy $(\Psi^t)_{t\in[0,1]}$ such that $\max_{t\in[0,1]}d_{C^0}(\Psi^t,\mathrm{Id})<\varepsilon$ and $\Psi^1\circ u_0=u_1$.
    \item Let $n\geq 2$, and let $\gamma_0,\gamma_1:S^1\rightarrow V$ be isotropic embeddings. We assume there exists a homotopy $\gamma_t:S^1\rightarrow V$ between $\gamma_0$ and $\gamma_1$ of size less than $\varepsilon$. Then there exists a contact isotopy $(\Psi^t)_{t\in[0,1]}$ such that $\max_{t\in[0,1]}d_{C^0}(\Psi^t,\mathrm{Id})<\varepsilon$ and $\Psi^1\circ \gamma_0=\gamma_1$.
\end{enumerate}
\end{thm}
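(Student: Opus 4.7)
The plan is to adapt the strategy of Buhovsky--Opshtein \cite{BO21} from the symplectic setting to the contact one, the key new ingredients being the contact Weinstein neighborhood theorem and a careful handling of the Reeb direction.

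First, I would discretize the homotopy $F$ into finitely many small steps. Because $F$ has size less than $\varepsilon$, I can choose $0=t_0<t_1<\cdots<t_N=1$ so that each intermediate slice $F(\cdot,t_i)$, after a tiny $C^0$-small perturbation (absorbed into the error budget), is a smooth isotropic embedding, and consecutive slices both sit inside a single contact Darboux chart together with a very thin tube joining them. It then suffices to construct, for each consecutive pair $(u_i,u_{i+1})$, a contact isotopy $\Psi_i^t$ supported in this tube with $d_{C^0}(\Psi_i^t,\mathrm{Id})<\varepsilon_i$, where $\sum\varepsilon_i<\varepsilon$; composition gives the desired $\Psi^t$.

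Second, in the local model I would invoke the contact Weinstein neighborhood theorem: a neighborhood of a subcritical isotropic disc $u_i(D^k)$ is contactomorphic to a neighborhood of the zero section in $J^1(D^k)\times\mathbb{R}^{2(n-k)}$ with the standard contact form. The target disc $u_{i+1}$, being $C^0$-close, can be written in this chart as a small perturbation of $u_i$ in the normal directions, which split as cotangent directions, conformal-symplectic-normal directions $\mathbb{R}^{2(n-k)}$, and the Reeb direction. The subcriticality $k<n$ furnishes at least two extra conformal-symplectic-normal directions; these provide the room to construct an explicit contact Hamiltonian (built from bump functions cut off to the tube) whose time-one map carries $u_i$ to $u_{i+1}$, analogous to the ``rotational'' isotopies used in the symplectic case.

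The principal obstacle, which I expect to consume most of the effort, is the $C^0$-control on the isotopy. Unlike the symplectic case, a contact Hamiltonian generates a flow that couples the $z$-coordinate nontrivially to the horizontal motion, so merely localizing the Hamiltonian in a thin tube is not enough to guarantee $d_{C^0}(\Psi^t,\mathrm{Id})<\varepsilon_i$; I would exploit the contact scaling symmetry of $dz-\sum y_idx_i$ to rescale the Hamiltonian so that both its support and the pointwise displacement it produces are governed by the tube diameter. For part (b), the hypothesis $n\geq 2$ ensures at least two conformal-symplectic-normal directions, hence a nonzero homotopy group of isotropic framings along $S^1$: this is precisely what is needed to close up the local moves along the circle into a global contact isotopy matching $\gamma_0$ to $\gamma_1$, whereas for $n=1$ the normal bundle is too narrow to absorb the monodromy.
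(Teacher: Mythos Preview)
Your proposal has a genuine gap in the $C^0$-control, and the fix you suggest does not address it.

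You discretize in the \emph{time} direction: split the homotopy into $N$ steps and, for each consecutive pair $(u_i,u_{i+1})$, build a contact isotopy supported in a Weinstein tube around $u_i(D^k)$. But an isotopy supported in a set $W$ can move points by as much as $\mathrm{diam}(W)$, and the tube around $u_i(D^k)$ has diameter at least $\mathrm{diam}\,u_i(D^k)$, which is in general much larger than $\varepsilon$ (let alone $\varepsilon/N$). The thinness of the tube in the normal directions is irrelevant; points are free to travel along the disc. The contact scaling you propose rescales the normal directions of the model, which again does nothing about tangential displacement. So neither localizing the Hamiltonian nor scaling it yields $d_{C^0}(\Psi_i^t,\mathrm{Id})<\varepsilon_i$.

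The paper's proof discretizes in the \emph{domain} instead. One takes a fine cubical grid on $D^k$ with step $\nu\ll\varepsilon$ and works skeleton by skeleton: first a contact isotopy $\Psi^t_0$ making $v_0:=\Psi^1_0\circ u_0$ agree with $u_1$ on a neighborhood of the $0$-skeleton, then $\Psi^t_1$ achieving agreement on the $1$-skeleton relative to what was already matched, and so on up to the $k$-skeleton. At each stage the isotopy is a composition of pieces supported in \emph{pairwise disjoint} sets $W_{x}$ (images under the thickened homotopy $\widetilde F$ of small cells), each of diameter $O(\nu)$; this is what forces $d_{C^0}(\Psi^t,\mathrm{Id})<\varepsilon$. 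The local matching on each cell is produced by the classical (relative, $C^0$-dense) $h$-principle for subcritical isotropic embeddings followed by the isotropic isotopy extension theorem, together with a transversality lemma to separate images so that the cells can be treated independently. For part~(b) the same grid argument runs on $S^1$; the role of $n\geq 2$ is that $G^{\mathrm{iso}}(1,2n+1)\cong\mathbb{R}^{2n}\setminus\{0\}$ is simply connected, so $d\gamma_0$ and $d\gamma_1$ are homotopic and the relative $h$-principle applies---this is a statement about $\pi_1$ being trivial, not about having ``extra normal directions to absorb monodromy.''
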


\noindent Our arguments rely on the classical $h$-principle for subcritical isotropic embeddings in contact manifolds. In appendix we provide a proof of it based on the book \cite{EM02}. As an application of this version of quantitative $h$-principle, we show that isotropic curves are flexible. More precisely, we prove that every contact manifold of dimension at least 5 admits a contact homeomorphism which maps a closed isotropic curve to a transverse one.

\begin{thm}\label{Theorem2}
    Let $(V,\xi)$ be a contact manifold of dimension at least $5$. There exists an isotropic embedding $\gamma:S^1\rightarrow V$ and a contact homeomorphism $h:V\rightarrow V$ which takes $\gamma$ to the smooth transverse knot $h\circ\gamma$.
\end{thm}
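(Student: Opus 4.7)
The plan is to realize $h$ as the $C^0$-limit of a sequence of contactomorphisms produced iteratively by Theorem \ref{Theorem1}(b). All the construction takes place in a single Darboux chart $U \subset V$, which we identify with a pre-compact open subset of $(\mathbb{R}^{2n+1}, \ker(dz - \sum_{i=1}^{n} y_i dx_i))$ with $n \geq 2$; the contact isotopies below will be compactly supported in $U$ (cutting off if needed) and extended to all of $V$ by the identity.

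First fix a smooth transverse embedded loop $\gamma_\infty : S^1 \to U$. Such loops are easy to produce locally when $n \geq 2$: take a transverse arc in the first Darboux pair $(x_1, y_1, z)$ and close it up by a small detour using the $(x_2, y_2)$-coordinates, exploiting the extra freedom to keep $\dot z - \sum y_i \dot x_i > 0$ throughout. We then approximate $\gamma_\infty$ by smooth isotropic embeddings $\gamma_k : S^1 \to U$ via the classical zigzag procedure: subdivide $\gamma_\infty$ and replace each short arc by a rectangular ``staircase'' arc that oscillates in the $(x_i, y_i)$-directions so that the contact form vanishes on its tangent vector pointwise, while the arc remains $C^0$-close to the segment it replaces. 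Scaling the amplitude and frequency of these oscillations appropriately, we arrange $d_{C^0}(\gamma_k, \gamma_\infty) < 2^{-k-1}$, whence $d_{C^0}(\gamma_k, \gamma_{k+1}) < 2^{-k}$, while keeping all $\gamma_k$ embedded. The straight-line homotopy in the Darboux chart between $\gamma_k$ and $\gamma_{k+1}$ then has size at most $2^{-k}$.

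Set $\gamma := \gamma_0$. Theorem \ref{Theorem1}(b) applied to each pair $(\gamma_k, \gamma_{k+1})$ yields a contact isotopy $(\Psi_k^t)_{t \in [0,1]}$ of $U$ with $\max_t d_{C^0}(\Psi_k^t, \mathrm{Id}) < 2^{-k}$ and $\Psi_k^1 \circ \gamma_k = \gamma_{k+1}$. Extending each $\Psi_k^t$ by the identity to $V$, define $h_N := \Psi_N^1 \circ \cdots \circ \Psi_0^1$. Then $h_N \circ \gamma = \gamma_{N+1}$ by induction, and
\[
d_{C^0}(h_{N+1}, h_N) = \sup_x d\bigl(\Psi_{N+1}^1(h_N(x)), h_N(x)\bigr) \leq d_{C^0}(\Psi_{N+1}^1, \mathrm{Id}) < 2^{-(N+1)},
\]
so $(h_N)$ is $C^0$-Cauchy. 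Using the general identity $d_{C^0}(\Psi^{-1}, \mathrm{Id}) = d_{C^0}(\Psi, \mathrm{Id})$ valid for any bijection, $(h_N^{-1})$ is also $C^0$-Cauchy; the uniform limits $h$ and $h^{-1}$ are then mutually inverse continuous maps, so $h$ is a homeomorphism of $V$, and being a $C^0$-limit of contactomorphisms, a contact homeomorphism. Passing to the limit gives $h \circ \gamma = \lim_N \gamma_{N+1} = \gamma_\infty$, a smooth transverse knot.

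The technical heart of the proof lies in the approximation step: producing a sequence of smooth embedded isotropic loops $C^0$-converging to a prescribed smooth transverse loop with summable consecutive distances. The qualitative statement that isotropic paths $C^0$-approximate arbitrary paths in a contact manifold is standard folklore, but for the telescoping construction above to close up, one needs the quantitative control provided by an explicit zigzag construction, together with verification that the approximating loops remain embedded and that their $C^0$-distances to $\gamma_\infty$ decay summably; this is where the geometric care should be concentrated. Once this sequence is in hand, Theorem \ref{Theorem1}(b) does the rest essentially for free.
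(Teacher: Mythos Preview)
There is a genuine gap in the step where you conclude that $(h_N^{-1})$ is $C^0$-Cauchy. You correctly observe that $d_{C^0}(\Psi^{-1},\mathrm{Id})=d_{C^0}(\Psi,\mathrm{Id})$, but this does \emph{not} imply that $(h_N^{-1})$ is Cauchy. Indeed $h_{N+1}^{-1}=h_N^{-1}\circ(\Psi_{N+1}^1)^{-1}$, so
\[
d_{C^0}(h_{N+1}^{-1},h_N^{-1})=\sup_x d\bigl(h_N^{-1}((\Psi_{N+1}^1)^{-1}(x)),\,h_N^{-1}(x)\bigr),
\]
and the right-hand side is only bounded by $d_{C^0}((\Psi_{N+1}^1)^{-1},\mathrm{Id})$ if the family $\{h_N^{-1}\}$ is uniformly equicontinuous, which you have no control over. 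In fact it is well known (and the reason for much of the care in $C^0$ symplectic and contact topology) that a uniform limit of diffeomorphisms need not be injective; your telescope produces a continuous map $h$ with $h\circ\gamma=\gamma_\infty$, but nothing prevents $h$ from collapsing, say, an open set onto $\gamma_\infty(S^1)$.

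The paper avoids this problem precisely by \emph{not} relying solely on the $C^0$-smallness coming from Theorem~\ref{Theorem1}(b). Instead it works in a tubular neighbourhood $S^1\times\mathbb{R}^{2n}$ of a fixed transverse knot $\eta$, uses explicit isotropic approximants $\gamma_m$, and at each step first applies a ``stretching'' contactomorphism (Lemma~\ref{StrechingClosedCase}) which forces the image of a given small neighbourhood of the current curve to contain $\eta(S^1)$. This allows the next $\psi_i$ to be chosen with support inside $\varphi_{i-1}(U_i)\cap W_i$, where $\{U_i\}$ shrinks to $\gamma_{k_1}(S^1)$ and $\{W_i\}$ shrinks to $\eta(S^1)$. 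Injectivity of the limit is then deduced from Lemma~\ref{homeomorphismLemma}, whose hypotheses~(1)--(3) encode exactly this support control: any point off the initial curve is eventually fixed by all later $\psi_i$, which is what makes the limit a homeomorphism. Your scheme can be repaired, but only by adding such a support-nesting step; the quantitative $h$-principle alone does not give it.
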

\noindent Our proof relies on the quantitative $h$-principle. On the other hand, for non-closed embedded curves we provide an explicit construction in Section \ref{sect4.2} where the construction does not rely on the quantitative $h$-principle.\\

For the next result, we need to define Hamiltonian homeomorphisms of symplectic manifolds. We use the definition given by Muller and Oh in \cite{OM}.
\begin{defn}[Hamiltonian homeomorphisms]
    Let $(\phi^t)_{t\in[0,1]}$ be compactly supported isotopy of a symplectic manifold $(M,\omega)$. We say that $\phi^t$ is a \textbf{hameotopy}, or a continuous Hamiltonian flow, if there exists a compact set $K\subset M$ and a sequence of smooth Hamiltonians $H_i:[0,1]\times M\rightarrow\mathbb{R}$ supported in $K$ such that:
    \begin{enumerate}
        \item The sequence $\phi^t_{H_i}$ $C^0$-converges to $\phi^t$, uniformly in $t$, i.e. $\max_{t\in[0,1]}d_{C^0}(\phi^t_{H_i},\phi^t)\rightarrow 0$ as $i\rightarrow\infty$.
        \item The sequence of Hamiltonians $H_i$, converges uniformly to a continuous function $H:[0,1]\times M\rightarrow\mathbb{R}$, i.e. $||H_i-H||_{\infty}\rightarrow 0$ as $i\rightarrow\infty$.
    \end{enumerate}
    We say that $H$ generates $\phi^t$, denote $\phi^t=\phi^t_H$, and call $H$ a continuous Hamiltonian. A homeomorphism is called a \textbf{Hamiltonian homeomorphism} if it is the time-1 map of a continuous Hamiltonian flow. We will denote the set of all Hamiltonian homeomorphisms by $\mathrm{Hameo}(M,\omega)$.
\end{defn}

One of applications of the symplectic quantitative h-principle for curves is a $C^0$ counterexample to the Arnold conjecture for symplectic manifolds of dimension at least 4, namely, an existence of $\phi \in\mathrm{Hameo}(M,\omega)$ having only one fixed point, on every closed symplectic manifold $(M,\omega)$ of dimension at least 4 (see \cite{BHS}). The Arnold conjecture admits a version for Lagrangian intersections, and it is natural ask if the Lagrangian Arnold conjecture survives when we pass to a Hamiltonian homeomorphisms. We construct a counterexample using quantitative $h$-principle for curves in the cotangent bundle.

\begin{thm}
    Let $L$ be a closed manifold of dimension at least $2$, and $(T^*L,\omega_{\mathrm{std}})$ cotangent bundle with its canonical symplectic form. Then there exists a Hamiltonian homeomorphism $f\in\mathrm{Hameo}(T^*L)$ such that the image of the zero-section $f(L_0)$ intersects the zero-section $L_0$ at a single point.
\end{thm}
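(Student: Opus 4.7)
The plan is to begin with a smooth Hamiltonian diffeomorphism whose image of $L_0$ meets $L_0$ transversally at finitely many points, and then cancel the unwanted intersections one at a time via Hameos built from a symplectic (Lagrangian) quantitative $h$-principle for curves in $T^*L$, which is the symplectic counterpart of Theorem~\ref{Theorem1}(b). Concretely, I would fix a Morse function $F\colon L\to\mathbb{R}$, scale $dF$ to lie in a Weinstein neighborhood of $L_0$, and take $\psi\in\mathrm{Ham}(T^*L)$ with $\psi(L_0)=\mathrm{graph}(\delta\,dF)$ for small $\delta>0$; then $\psi(L_0)\cap L_0$ is exactly the critical set $\{p_1,\ldots,p_N\}$ of $F$ and the intersections are transverse. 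The desired Hameo will have the form $f=h\circ\psi$, where $h\in\mathrm{Hameo}(T^*L)$ is chosen to erase all $p_i$ with $i\geq 2$.

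The core technical statement is a \emph{cancellation lemma}: given $g\in\mathrm{Hameo}(T^*L)$, two points $p\neq q$ of $g(L_0)\cap L_0$ joined by an embedded arc $\gamma\subset g(L_0)$ with $\gamma\cap L_0=\{p,q\}$, and any open neighborhood $U$ of $\gamma$, there exists $h\in\mathrm{Hameo}(T^*L)$ supported in $U$ with
\[
(h\circ g)(L_0)\cap L_0 \;=\; \bigl(g(L_0)\cap L_0\bigr)\setminus\{q\}.
\]
To deduce the theorem, I would choose pairwise disjoint embedded arcs $\gamma_2,\ldots,\gamma_N\subset\psi(L_0)$ with $\gamma_i$ joining $p_1$ to $p_i$ (possible since $\psi(L_0)\cong L$ is path-connected), take disjoint tubular neighborhoods $U_i$ of $\gamma_i$, and apply the lemma successively inside each $U_i$, composing the resulting Hameos.

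The construction of $h$ in the lemma is where the quantitative $h$-principle enters. I would build $h$ as a $C^0$-limit of Hamiltonian diffeomorphisms $h_n=\varphi^1_{H_n}$ whose generating Hamiltonians $H_n$ converge uniformly, as demanded by the Müller--Oh definition. The construction is iterative: at stage $n$, pick a shorter sub-arc $\gamma_n\subset\gamma$ terminating just short of $q$ and a short homotopy, inside a shrinking sub-tube $U_n\subset U$, that pulls the endpoint of $\gamma_n$ slightly off $L_0$; the quantitative $h$-principle for Lagrangian curves then provides a Hamiltonian isotopy of $C^0$-norm less than $\varepsilon_n$ realizing this homotopy. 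Choosing $\varepsilon_n$ summable and letting $U_n$ shrink exponentially toward $q$, the compositions $h_n$ converge in $C^0$, while a careful cutoff of the generating Hamiltonians guarantees their uniform convergence.

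The main obstacle will be running this limiting process so that three conditions hold simultaneously: $C^0$-convergence of the diffeomorphisms, uniform convergence of the generating Hamiltonians, and no creation of new intersections with $L_0$ during the iteration. The last point is especially subtle since the Hamiltonian isotopies inevitably perturb portions of $L_0$ itself near $p$ and $q$; here the quantitative $C^0$-smallness in Theorem~\ref{Theorem1} is indispensable, as it keeps each deformation confined to $U_n$ and so rules out stray intersections with $L_0$ outside a neighborhood of $\gamma$.
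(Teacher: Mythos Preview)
Your cancellation lemma is false for a topological reason, and this is a genuine gap. Take a small tubular neighbourhood $U$ of the arc $\gamma\subset g(L_0)$; then $U$ is a $2n$-ball, $g(L_0)\cap U$ is a properly embedded $n$-disc $D_g$, and $L_0\cap U$ consists of two properly embedded $n$-discs $D_p,D_q$ meeting $D_g$ transversally in $\{p\}$ and $\{q\}$ respectively. In $H_{n-1}(U\setminus D_q)\cong\mathbb{Z}$ the class $[\partial D_g]$ equals the intersection number $D_g\cdot D_q=\pm 1$, so $\partial D_g$ does not bound in $U\setminus D_q$. Any homeomorphism $h$ supported in $U$ fixes $\partial D_g$, and $h(D_g)$ is a singular $n$-chain with boundary $\partial D_g$; hence $h(D_g)\cap D_q\neq\emptyset$. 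In other words, no compactly supported homeomorphism of $U$, let alone a Hameo, can push $g(L_0)$ off $L_0$ near $q$ without creating a new intersection there. The very issue you flag at the end (``no creation of new intersections'') is not a subtlety to be managed but an outright obstruction.

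The paper's proof proceeds in the opposite direction: instead of trying to \emph{decrease} the intersection set, it first \emph{enlarges} it. One builds a Hameo $\psi$ with $\psi(L_0)\cap L_0$ equal to an embedded tree $T\subset L_0$ whose vertex set is $\mathrm{Crit}(H)$ (this is where the quantitative $h$-principle for curves is used, edge by edge). Then one constructs a sequence $\varphi_i$ of Hamiltonian diffeomorphisms \emph{preserving the zero section} that contract $T$ toward a single point $p$; the limit $\varphi$ is a diffeomorphism $T^*L\setminus T\to T^*L\setminus\{p\}$. Conjugating, $f:=\varphi\circ\psi\circ\varphi^{-1}$ satisfies $f(L_0)\cap L_0=\varphi(\psi(L_0)\cap L_0)=\varphi(T)=\{p\}$, and one checks that $f\in\mathrm{Hameo}(T^*L)$. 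The conjugation by a zero-section-preserving map is exactly what circumvents the linking obstruction above: one never attempts to separate two linked discs, but rather collapses the entire intersection locus to a point.
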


\begin{rem}
    The theorem has been proven in \cite{BHS2} when $\mathrm{dim}\, L=2$, but the proof does not generalize to higher dimensions since it uses the fact that every arc $\gamma\subset L$ which is homeomorphic to the interval $[0,1]$ admits a basis of neighbourhoods homeomorphic to open discs, which is only true in dimension $2$. When $\mathrm{dim}\, L=1$, we have $L=S^1$ and $T^*L\cong S^1\times\mathbb{R}$ is a cylinder, however Hamiltonian homeomorphisms preserve area so we must have at least $2$ intersections between the zero-section and its image.
\end{rem}

Finally, we give a new proof of $C^0$-rigidity of Legendrian knots, and explain a possible direction towards proving $C^0$-rigidity of Legendrian submanifolds in higher dimensions. More precisely, we try to answer the following question: \textit{Let $\psi:(Y,\xi)\rightarrow (Y,\xi)$ be a contact homeomorphism. Let $\Lambda\subset Y$ be Legendrian submanifold, such that $\psi(\Lambda)$ is smooth submanifold. Must $\psi(\Lambda)$ be Legendrian?}\\

Dimitroglou Rizell and Sullivan recently proved \cite{RS22} that Legendrian knot cannot be mapped to smooth a non-Legendrian knot via contact homeomorphism. Their proof relies on methods available only in the the 3-dimensional contact manifolds. On the other hand, the main tool for our proof is recent result of Entov and Polterovich \cite{EP21} which includes the concept of contact interlinking of Legendrians, and it is available in higher dimensions as well. This is the first higher dimensional instance of the above question as stated.

\begin{defn}
    Let $(Y^{2n+1},\xi)$ be a contact manifold, and $N\subset Y$ compact subset. We call $N$ \textit{nearly Reeb invariant} if every open neighbourhood $U\supset N$ contains an open subset $V\subset U$ (where $N\subset V$) such that $V$ is invariant under the Reeb flow of some contact form on $U$ associated with $\xi$.
\end{defn}

\begin{thm}\label{Theorem4}
    Image of a closed Legendrian submanifold $\Lambda\subset (Y,\xi)$ via contact homeomorphism cannot be nearly Reeb invariant.
\end{thm}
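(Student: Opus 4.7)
The plan is to argue by contradiction. Assume $N := \psi(\Lambda)$ is nearly Reeb invariant. The strategy is to use the Entov--Polterovich contact interlinking theorem \cite{EP21} to obtain a rigid pair of Legendrians near $\Lambda$, transport this rigidity through $\psi$, and then exploit near Reeb invariance of $N$ to construct a positive contact isotopy that violates the resulting obstruction.

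First I would produce an interlinked pair. In a small Darboux neighbourhood of a point of $\Lambda$, define a Reeb pushoff $\Lambda'$ of $\Lambda$ with small parameter $s_0$, smoothly interpolated to coincide with $\Lambda$ outside the chart, so that $\Lambda'$ is a Legendrian disjoint from $\Lambda$ and smoothly isotopic to it. By \cite{EP21}, for $s_0$ small enough the pair $(\Lambda, \Lambda')$ is contact-interlinked: there is a quantitative obstruction, in terms of a spectral/Hofer-type invariant, to separating $\Lambda$ and $\Lambda'$ by a long positive contact isotopy supported in a neighbourhood of $\Lambda \cup \Lambda'$.

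Next I would transport the obstruction across $\psi$. Since the interlinking invariant of \cite{EP21} is built from persistence Floer-theoretic data depending only on the underlying closed subsets, one expects it to be $C^0$-stable under convergence of contactomorphisms. Concretely, for any sequence of smooth contactomorphisms $\psi_n$ approximating $\psi$, the pairs $(\psi_n(\Lambda), \psi_n(\Lambda'))$ satisfy a uniform interlinking bound, and passing to the $C^0$-limit yields the analogous bound for $(N, N')$, where $N' := \psi(\Lambda')$. In particular, no long positive contact isotopy can separate $N$ from $N'$ in any prescribed quantitative sense.

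Finally I would exploit near Reeb invariance to obtain a contradiction. Because $\psi$ is a homeomorphism and $\Lambda \cap \Lambda' = \emptyset$, the compact sets $N$ and $N'$ are disjoint; choose disjoint open neighbourhoods $\mathcal{U} \supset N$ and $\mathcal{U}' \supset N'$. By near Reeb invariance there exist a contact form $\alpha$ on $\mathcal{U}$ defining $\xi|_{\mathcal{U}}$ and a Reeb-invariant open subset $V \subset \mathcal{U}$ with $N \subset V$. A cut-off of the Reeb flow of $\alpha$ outside $V$ yields a smooth positive contact isotopy $\Phi^t$ of $Y$, supported in $\mathcal{U}$, that fixes $N'$ and acts by the Reeb flow of $\alpha$ on $V$ for arbitrarily long time $t$. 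Conjugating $\Phi^t$ by sufficiently good smooth contactomorphism approximations of $\psi^{-1}$ produces positive contact isotopies near $\Lambda$ that slide $\Lambda$ by the pull-back Reeb flow while essentially fixing $\Lambda'$; for sufficiently long iteration time this overshoots the quantitative EP interlinking bound transported in the previous step, the desired contradiction.

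The main obstacle I anticipate is the second step: making precise the sense in which the EP interlinking invariant is $C^0$-stable and descends from smooth Legendrians to their images under the contact homeomorphism $\psi$. This likely requires either isolating a capacity-type reformulation of the invariant that depends only on the underlying compact subsets, or establishing a uniform persistence-module estimate together with a diagonal extraction across approximating contactomorphisms, in the spirit of the $C^0$ contact rigidity arguments of M\"uller--Spaeh \cite{MuSp14, Mu19}. A secondary technical point is arranging, during the conjugation in step three, that the iterated flows remain supported in $\mathcal{U}$ and disjoint from $\mathcal{U}'$ throughout the long time interval, so that the comparison with the interlinking bound is valid.
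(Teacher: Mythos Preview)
Your overall strategy---contradiction via the Entov--Polterovich interlinking theorem and a positive contact flow coming from near Reeb invariance---is exactly the right one, and it is what the paper uses. However, your Step~2 contains a genuine gap, and the paper's proof avoids it by a trick you have not identified.

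The problem is that ``transporting interlinking through $\psi$'' asks you to make sense of interlinking for the pair $(N,N')$, where $N=\psi(\Lambda)$ and $N'=\psi(\Lambda')$ need not be smooth submanifolds at all. The EP invariant is defined for pairs of \emph{Legendrian submanifolds}; there is no known reformulation of it as a capacity of arbitrary compact subsets, and no $C^0$-stability statement of the kind you are hoping for. Your anticipated fix (persistence estimates plus diagonal extraction across approximations $\psi_n$) would require, at the very least, uniform control on the interlinking constants of $(\psi_n(\Lambda),\psi_n(\Lambda'))$ that depends only on the $C^0$ data of $\psi_n$; nothing like this is available. You correctly flag this as the main obstacle, but it is not a technicality---it is the heart of the matter, and as stated your proof does not close.

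The paper sidesteps this completely. Instead of pushing the interlinked pair forward through the non-smooth $\psi$, it pulls the Reeb flow \emph{back} through a single smooth approximation. Concretely: by the Legendrian neighbourhood theorem, identify a neighbourhood $\mathcal{U}$ of $\Lambda$ with a neighbourhood of the zero section in $J^1\Lambda$. One first shows (by a short compactness argument using only that $\psi(\Lambda)\cap\psi(Y\setminus\Lambda)=\emptyset$) that $\psi(\Lambda)\subset\varphi_i(\mathcal{U})$ for all large $i$, where $\varphi_i\to\psi$ are contactomorphisms. Pick one such $\varphi_{i_0}$ with $\varphi_{i_0}(\Lambda)$ contained in the Reeb-invariant set $\mathcal{W}$ supplied by near Reeb invariance. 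Now conjugate the Reeb flow of $\alpha$ on $\mathcal{W}$ by the \emph{smooth contactomorphism} $\varphi_{i_0}^{-1}$ (composed with the neighbourhood identification) to obtain a contact Hamiltonian flow on a bounded open set $\widetilde{\mathcal{W}}\subset J^1\Lambda$ containing the zero section, preserving $\widetilde{\mathcal{W}}$, and generated by a Hamiltonian bounded below by some $c>0$ (the conformal factor). Extend the Hamiltonian to all of $J^1\Lambda$ keeping it bounded and $\geq c$. This directly contradicts Theorem~\ref{thmEP21}(i) applied to the zero section $\Lambda_0$ and $\Lambda_1=\{z=C,\ p=0\}$ for $C$ large, since no orbit starting in the bounded invariant set $\widetilde{\mathcal{W}}$ can ever reach $\Lambda_1$.

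Two further remarks. First, your Step~1 is internally inconsistent: a Legendrian $\Lambda'$ that ``coincides with $\Lambda$ outside the chart'' cannot be disjoint from $\Lambda$. The correct interlinked partner in the EP setup is a \emph{global} pushoff (the $1$-jet graph of a positive function on $\Lambda$), and the paper simply uses the constant graph $\{z=C,\ p=0\}$ in $J^1\Lambda$. Second, all the conjugation and support issues you raise in your final paragraph disappear in the paper's approach, because everything happens inside the bounded Reeb-invariant set $\widetilde{\mathcal{W}}$ via a single smooth conjugation; no long-time support control or iterated approximation is needed.
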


\begin{rem}
    The theorem remains true, with the same proof, for a larger class of maps than contact homeomorphisms. Namely, any continuous map $\varphi:Y\rightarrow Y$ which can be written as a $C^0$-limit of a sequence of contactomorphisms and satiesfies $\varphi(\Lambda)\cap\varphi(Y\setminus\Lambda)=\emptyset$ cannot map $\Lambda$ to a nearly Reeb invariant set.
\end{rem}

In contact $3$ manifolds, a neighbourhood of a transverse knot can be contactly embedded into the neighbourhood of the zero-section in $(S^1\times\mathbb{R}^2,d\theta-xdy)$. For any $\varepsilon>0$ the set $S^1\times D(\varepsilon)\subset S^1\times\mathbb{R}^2$ is invariant under the Reeb flow of $d\theta-xdy$, therefore transverse knots are nearly Reeb invariant. But we have even more:

\begin{prop}\label{prop1.3}
    Let $(Y,\xi)$ be a contact 3 manifold and $K\subset Y$ smooth non-Legendrian knot (there exists a point $p\in K$ such that $T_pK\pitchfork\xi_p$). Then $K$ is nearly Reeb invariant.
\end{prop}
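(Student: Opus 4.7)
The plan is to reduce the problem to the transverse case by producing a smooth transverse knot $K'\subset U$ whose standard contact neighborhood contains $K$. Given such a $K'$ together with a contact embedding $\Phi:(S^1\times D^2(r),\ker(d\theta-xdy))\hookrightarrow (U,\xi|_U)$ sending $S^1\times\{0\}$ to $K'$, I set $V:=\Phi(S^1\times D^2(r))$, transfer the standard contact form to a contact form $\alpha_0$ on $V$ defining $\xi|_V$, and extend $\alpha_0$ to a contact form $\alpha$ on $U$ defining $\xi|_U$ (using that any two contact forms for $\xi|_U$ differ by a positive function and can be interpolated off a slightly smaller sub-tube). Since the Reeb vector field of $d\theta-xdy$ is $\partial_\theta$, which preserves every sub-tube $S^1\times D^2(s)$, the set $V$ is automatically Reeb-invariant under $\alpha$, and $K\subset V\subset U$ as required.

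The construction of $K'$ is where the non-Legendrian hypothesis enters. By continuity of transversality, the given transverse point $p\in K$ lies in an open sub-arc $K_0\subset K$ on which $K$ is already transverse to $\xi$. I keep $K'=K$ on a compact sub-arc of $K_0$ and replace $K$ on the complementary (possibly disconnected) arc by a $C^0$-small smooth perturbation that is transverse to $\xi$ and matches smoothly with the transverse endpoint data inherited from $K_0$. Locally, in Darboux coordinates with $\alpha=dz-ydx$, such a perturbation can be produced by coordinated small shifts in the $x$- and $y$-directions: a $y$-shift by $\epsilon\rho(t)$ changes $\alpha(\dot K)=\dot z-y\dot x$ by $-\epsilon\rho(t)\dot x(t)$, and an $x$-shift by $\epsilon\tau(t)$ changes it by $-\epsilon y(t)\dot\tau(t)$, and together these are enough to make $\alpha(\dot K')$ have constant nonzero sign along the perturbed arcs. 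The consistent sign at the boundary is inherited from $K_0$, so there is no global closing-up obstruction; this is the step that fails for a genuinely Legendrian knot, reflecting the fact that Legendrian knots admit no uniformly transverse $C^0$-approximation, and it is consistent with Theorem \ref{Theorem4}.

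The remaining quantitative step is to ensure that the radius $r$ of the standard neighborhood of $K'$ is large enough that $K\subset\Phi(S^1\times D^2(r))\subset U$. Since the $C^0$-size of the perturbation is some $\epsilon>0$, the neighborhood theorem must supply $r\gtrsim\epsilon$. I arrange this by choosing the perturbation so that $K'$ is uniformly transverse, i.e., $|\alpha(\dot K')|$ is bounded below by a constant independent of $\epsilon$; the transverse neighborhood theorem then gives a radius depending only on that uniform lower bound and the local contact geometry of $(Y,\xi)$ near $K'$, and in particular not on $\epsilon$. Shrinking $\epsilon$ if necessary then yields $K\subset\Phi(S^1\times D^2(r))\subset U$. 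The principal obstacle in the argument is precisely this quantitative coupling between the smallness of the $C^0$-perturbation and the size of the standard neighborhood—achievable exactly when $K$ is non-Legendrian.
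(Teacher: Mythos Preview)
The paper's proof takes a different route: it does not construct the transverse knot by hand but invokes Theorem~A of \cite{RS22}, which provides a transverse knot $T\subset U$ together with a contact isotopy $\varphi^t$ of $U$ squeezing $K$ into arbitrarily small neighborhoods of $T$. One then chooses $\tau$ large, takes a Reeb-invariant tube $W$ around $T$ containing $\varphi^\tau(K)$, and sets $V=(\varphi^\tau)^{-1}(W)$, which is invariant under the Reeb flow of $((\varphi^\tau)^{-1})^*\alpha$. All of the quantitative difficulty is absorbed into the cited squeezing result.

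Your direct construction has a genuine gap precisely at the step you yourself flag as the ``principal obstacle''. You assert that the $\epsilon$-small perturbation $K'$ can be made uniformly transverse with $|\alpha(\dot K')|\geq c$ independent of $\epsilon$, and that the standard-neighborhood radius then depends only on $c$ and the ambient geometry. Neither claim is justified. With your own formulas, on a Legendrian sub-arc of $K$ a $y$-shift by $\epsilon\rho$ contributes $-\epsilon\rho\,\dot x$ and an $x$-shift by $\epsilon\tau$ contributes $-\epsilon y\,\dot\tau$; making these of order one forces $\dot\tau\sim 1/\epsilon$, i.e.\ $K'$ must oscillate with frequency $\sim 1/\epsilon$. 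But the radius of a standard transverse tube is governed by the $C^1$-geometry of the embedding, not by the pointwise value of $\alpha(\dot K')$: a knot oscillating at frequency $1/\epsilon$ has embedded tubular radius $O(\epsilon)$, which is exactly the scale you must exceed in order to swallow $K$. Your use of the transverse sub-arc $K_0$ fixes only the \emph{sign} of $\alpha(\dot K')$ (so that a closed transverse perturbation exists at all), not its magnitude or the tube radius; the quantitative part of your argument on the complementary arc never uses $K_0$ and, if it worked as written, would equally show that a closed Legendrian is nearly Reeb invariant, contradicting Theorem~\ref{Theorem4}. Resolving this tension is precisely the content of the squeezing theorem from \cite{RS22} that the paper invokes, and it is not a formality.
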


\begin{cor}[\cite{RS22}]
    Let $\Lambda\subset(Y,\xi)$ be a Legendrian knot and $\varphi:Y\rightarrow Y$ a contact homeomorphism. If the image $\varphi(\Lambda)$ is smooth, then it must be Legendrian.
\end{cor}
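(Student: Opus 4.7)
The proof is essentially a one-line consequence of the two preceding results, so my plan is just to make sure the logical chain fits together and to check the dimension/ambient hypotheses.

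The strategy is proof by contradiction. I would assume that $\varphi(\Lambda)$ is a smooth knot which fails to be Legendrian, i.e.\ there is some point $p \in \varphi(\Lambda)$ at which $T_p\varphi(\Lambda) \pitchfork \xi_p$. Since the statement speaks of a Legendrian \emph{knot} in a contact manifold, the ambient $(Y,\xi)$ is $3$-dimensional, so Proposition \ref{prop1.3} applies to $\varphi(\Lambda)$: a smooth non-Legendrian knot in a contact $3$-manifold is nearly Reeb invariant. On the other hand, Theorem \ref{Theorem4} forbids the image of a closed Legendrian submanifold under a contact homeomorphism from being nearly Reeb invariant. These two facts contradict each other, forcing $\varphi(\Lambda)$ to be Legendrian.

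The only point requiring a moment's care is verifying that the hypotheses of Theorem \ref{Theorem4} and Proposition \ref{prop1.3} are genuinely met: closedness of $\Lambda$ (given, since $\Lambda$ is a knot), compactness of the image (automatic since $\varphi$ is continuous and $\Lambda$ compact), and the non-Legendrian transversality assumption (supplied by the contradiction hypothesis together with smoothness of $\varphi(\Lambda)$). There is no real obstacle here, because the substantial content has been packaged into the two results cited; the corollary is purely a syllogism combining them.
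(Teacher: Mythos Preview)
Your argument is exactly the intended one: the corollary is obtained by combining Theorem~\ref{Theorem4} with Proposition~\ref{prop1.3}, and the paper does not spell out any additional steps beyond this syllogism. The hypothesis checks you mention are the only things to verify, and they go through as you describe.
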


We now give an example of nearly Reeb invariant submanifold in dimension $2n+1>3$. Let $T\subset (M,\mathrm{ker}\,\lambda_M)$ be a transverse knot such that the Reeb vector field $R_{\lambda_M}$ is tangent to $T$. Let $(N,d\lambda_N)$ be an exact $(2n-2)$-dimensional symplectic manifold and $L\subset N$ submanifold of dimension $n-1$. Then $T\times L\subset (M\times N,\pi^*_M\lambda_M+\pi^*_N\lambda_N)$ is nearly Reeb invariant submanifold of dimension $n$.\\

\noindent\textbf{Acknowledgements.} I am very grateful to L. Buhovsky for his guidence, support and helpful discussions. I also thank G. Dimitroglou Rizell, L. Polterovich, S. Seyfaddini and M. Sullivan for the valuable feedback. This work was partially supported by ERC Starting Grant 757585 and ISF Grant 2026/17.

\section{Preliminaries}

\subsection{Symplectic and contact homeomorphisms}

Let $M$ be either a symplectic manifold with symplectic form $\omega$, or a contact manifold with the contact structure $\xi$. We equip $M$ with a Riemannian distance $d$. Given two compactly supported maps $\phi,\psi$ on $M$, we denote
$d_{C^0}(\phi,\psi)=\max_{x\in M}d(\phi(x),\psi(x))$. We say that a sequence of maps $\phi_i:M\rightarrow M$ converges uniformly, or $C^0$-converges to $\phi$, if there exists a compact subset of $M$ which contains the supports of all $\phi_i$'s, and moreover $d_{C^0}(\phi_i,\phi)\rightarrow 0$ as $i\rightarrow\infty$. The notion of $C^0$-convergence does not depend on the choice of a Riemannian metric.

\begin{defn}
    A homeomorphism $\theta:M\rightarrow M$ is said to be \textbf{symplectic/contact} if it is a $C^0$-limit of a sequence of symplectic/contact diffeomorphisms.
\end{defn}

The Eliashberg-Gromov rigidity theorem (which holds in the contact case as well) implies that a smooth symplectic/contact homeomorphism is itself symplectic/contact, i.e. preserves symplectic/contact structure. The next lemma will be useful for constructing homeomorphisms as a $C^0$-limits of diffeomorphisms.

\begin{lemma}\label{homeomorphismLemma}
    Let $f_0,f_1:N\rightarrow M$ be topological embeddings of a compact manifold $N$ (possibly with boundary). Let $\{U_i\}_{i\geq 1}$ be decreasing sequence of open sets such that $\bigcap_{i\geq 1}U_i=f_0(N)$. Let $\{\psi_i\}_{i\geq 1}$ be a sequence of compactly supported diffeomorphisms of $M$ which satisfies
    \begin{enumerate}
        \item $\psi_i$ is supported inside $\varphi_{i-1}(U_i)\supset f_1(N)$, where $\varphi_i=\psi_{i}\circ\psi_{i-1}\circ\cdots\circ\psi_1$,
        \item $\sum_{i=1}^{\infty}d_{C^0}(\psi_i,\mathrm{Id})<\infty$,
        \item The sequence $\varphi_i\circ f_0$ $C^0$-converges to $f_1$.
    \end{enumerate}
    Then the sequence $\varphi_i$ $C^0$-converges to a homeomorphism $\varphi:M\rightarrow M$, so that $\varphi\circ f_0=f_1$.
\end{lemma}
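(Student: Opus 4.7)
First, I would show that $\{\varphi_i\}$ is uniformly Cauchy with continuous limit $\varphi:M\to M$. The key estimate is
\[
d_{C^0}(\varphi_{i+1},\varphi_i)=d_{C^0}(\psi_{i+1}\circ\varphi_i,\varphi_i)\leq d_{C^0}(\psi_{i+1},\mathrm{Id}),
\]
so condition (2) makes the telescoping sum summable and the limit $\varphi$ exists and is continuous. Passing to the limit in condition (3) gives $\varphi\circ f_0=f_1$.

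Next, I would analyze the structure of $\varphi$ away from $f_0(N)$. Since $\bigcap_i U_i=f_0(N)$ and the $U_i$ are decreasing, any $x\notin f_0(N)$ satisfies $x\notin U_j$ for all $j\geq i$ for some $i$. Then $\varphi_{j-1}(x)\notin\varphi_{j-1}(U_j)\supset\mathrm{supp}(\psi_j)$, so $\psi_j$ fixes $\varphi_{j-1}(x)$ and $\varphi_j(x)=\varphi_{j-1}(x)$; inductively $\varphi(x)=\varphi_{i-1}(x)$. Setting $\varphi_0:=\mathrm{Id}$, the case $i=1$ yields $\varphi\equiv\mathrm{Id}$ on $M\setminus U_1$, and by induction $\mathrm{supp}(\psi_j)\subset U_1$ for every $j$, so each $\varphi_j$ restricts to a bijection of $U_1$. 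Injectivity of $\varphi$ then splits into three cases: both $x,y\notin f_0(N)$ (use bijectivity of $\varphi_{i-1}$ on a common $M\setminus U_i$); exactly one outside $f_0(N)$ (images separate because $f_1(N)\subset\varphi_{i-1}(U_i)$ while $\varphi(y)=\varphi_{i-1}(y)\in M\setminus\varphi_{i-1}(U_i)$); both inside $f_0(N)$ ($\varphi$ coincides with the injection $f_1\circ f_0^{-1}$).

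The main obstacle is showing $\varphi$ is a homeomorphism, i.e., surjective with continuous inverse. In the intended applications, $U_1$ is a connected tubular neighborhood of $f_0(N)$ with compact closure, and I would take this as implicit. Passing to the uniform limit of $\varphi_i(\overline{U_1})=\overline{U_1}$ gives $\varphi(\overline{U_1})\subset\overline{U_1}$; combined with injectivity and $\varphi|_{\partial U_1}=\mathrm{Id}$, this forces $\varphi(U_1)\subset U_1$, since otherwise some $x\in U_1$ and some $y\in\partial U_1$ would have the same image. Invariance of domain makes $\varphi(U_1)$ open in $M$, while compactness of $\overline{U_1}$ makes $\varphi(\overline{U_1})$ closed in $M$, so $\varphi(U_1)=\varphi(\overline{U_1})\cap U_1$ is closed in $U_1$. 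Thus $\varphi(U_1)$ is clopen and nonempty in the connected set $U_1$, giving $\varphi(U_1)=U_1$. Then $\varphi|_{\overline{U_1}}$ is a continuous bijection of compact Hausdorff spaces, hence a homeomorphism, which pastes with $\varphi|_{M\setminus U_1}=\mathrm{Id}$ (agreeing on $\partial U_1$) into a global homeomorphism of $M$. The crux lies in this final step: invoking invariance of domain correctly and extracting openness and closedness of $\varphi(U_1)$ from the compactness and connectedness of $U_1$ that hold in the intended applications but are not explicit in the hypotheses.
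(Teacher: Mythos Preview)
Your argument follows the paper's proof closely: the telescoping estimate for uniform convergence and the three-case injectivity analysis are exactly what the paper does. Where you diverge is in the final step: the paper simply writes ``Let us prove that $\varphi$ is an injective map, and hence a homeomorphism'' and stops after injectivity, whereas you correctly recognize that injectivity alone does not yield a homeomorphism of a non-compact manifold and supply the missing surjectivity via invariance of domain and a clopen argument on $U_1$.

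Your added hypotheses (connectedness and compact closure of $U_1$) are indeed satisfied in every application in the paper, and some such assumption is genuinely needed to pass from ``continuous injection equal to the identity outside a compact set'' to ``homeomorphism.'' So your proof is more complete than the paper's on this point; the paper is tacitly relying on the same invariance-of-domain reasoning you spell out, together with the convention (stated in its definition of $C^0$-convergence) that all the $\psi_i$ share a common compact support.
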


\begin{proof}
    From 2. we get that $\{\varphi_i\}$ is a Cauchy sequence, so it converges uniformly to a continuous map $\varphi$. Property 3. implies $\varphi\circ f_0=f_1$. Let us prove that $\varphi$ is an injective map, and hence a homeomorphism. Let $x\neq y$ be different points on $M$.
    \begin{itemize}
        \item If $x,y\in f_0(N)$, then 3. implies that $\varphi(x)=f_1\circ f_0^{-1}(x)\neq f_1\circ f_0^{-1}(y)=\varphi(y)$.
        \item If $x,y\notin f_0(N)$, then $x,y\in U^c_i$ for $i\geq i_0$, and hence 1. implies that $\varphi_{i_0}(x)=\varphi_{i_0+1}(x)=\ldots=\varphi(x)$ and $\varphi_{i_0}(y)=\varphi(y)$ and so $\varphi(x)\neq\varphi(y)$.
        \item If $x\in f_0(N)$ and $y\notin f_0(N)$, then as before we have $\varphi_{i}(y)=\varphi(y)$ for $i\geq i_0$. However, $\varphi(y)=\varphi_{i_0}(y)\notin\varphi_{i_0}(U_{i_0+1})\supset f_1(N)$ hence we have $\varphi(y)\notin f_1(N)$. On the other hand $\varphi(x)\in f_1(N)$ and therefore $\varphi(x)\neq\varphi(y)$.
    \end{itemize}
\end{proof}

\subsection{Quantitative $h$-principle in symplectic geometry}

Useful tool in proving flexibility statements in $C^0$-symplectic topology is quantitative $h$-principle introduced first in \cite{BO16}. It is an extension of various types of Gromov's $h$-principle, and so far it has been proven for symplectic 2-discs in \cite{BO16}, curves \cite{BHS} and most recently for subcritical isotropic discs \cite{BO21}. We will need the following quantitative $h$-principle for curves in the construction of counterexample to the Lagrangian Arnold conjecture.

\begin{thm}[Quantitative $h$-principle for curves \cite{BHS}]\label{h-principleCurves}
    Let $(M,\omega)$ be a symplectic manifold of dimension at least $4$, and let $\varepsilon>0$. Suppose that $\gamma_0,\gamma_1:[0,1]\rightarrow M$ are two smoothly embedded curves such that
    \begin{itemize}
        \item $\gamma_0$ and $\gamma_1$ coincide near $t=0$ and $t=1$,
        \item there exists a homotopy relative to the end points, between $\gamma_0$ and $\gamma_1$ under which the trajectory of any point of $\gamma_0$ has diameter less than $\varepsilon$, and the symplectic area of the element of $\pi_2(M,\gamma_1\#\overline{\gamma_0})$ defined by this homotopy has area $0$.
    \end{itemize}
    Then, for any $\rho>0$, there exists a compactly supported Hamiltonian $F$, generating a Hamiltonian isotopy $\varphi^s:M\rightarrow M,\,s\in[0,1]$ such that
    \begin{enumerate}
        \item $F$ vanishes near $\gamma_0(0)$ and $\gamma_0(1)$,
        \item $\varphi^1\circ\gamma_0=\gamma_1$,
        \item $d_{C^0}(\varphi^s,\mathrm{Id})<2\varepsilon$ for each $s\in[0,1]$, and $||F||_{\infty}\leq\rho$,
        \item $F$ is supported in the $2\varepsilon$-neighbourhood of the image of $\gamma_0$.
    \end{enumerate}
\end{thm}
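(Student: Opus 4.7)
The plan is to reduce the global statement to a local lemma in a Darboux ball and then reassemble. Using the $\varepsilon$-smallness of the homotopy together with compactness, I would choose a fine partition $0=t_0<t_1<\cdots<t_N=1$ of the parameter interval so that for each $i$ the full image of the homotopy over the sub-interval $[t_i,t_{i+1}]\times [0,1]$ is contained in a Darboux ball $B_i\subset M$ of diameter only slightly larger than $\varepsilon$. Near the endpoints $t=0$ and $t=1$ nothing needs to be done, because $\gamma_0$ and $\gamma_1$ already coincide there, which also ensures that the resulting Hamiltonian can be made to vanish near $\gamma_0(0)$ and $\gamma_0(1)$.

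The key local lemma I would establish reads: in $(B,\omega_{\mathrm{std}})\subset\mathbb{R}^{2n}$ with $n\ge 2$, two smoothly embedded arcs $\alpha_0,\alpha_1$ with common endpoints bounding an immersed disc of zero symplectic area are connected by a compactly supported Hamiltonian flow whose $C^0$-size and $L^{\infty}$-norm can be made arbitrarily small. To prove it, I use that curves in dimension at least four have codimension at least three, so $\alpha_0$ and $\alpha_1$ are ambient-smoothly isotopic within $B$; a relative Moser argument applied in a tubular neighbourhood of the trace of the isotopy promotes the smooth ambient isotopy to a symplectic one. The remaining obstruction to being Hamiltonian is precisely the symplectic flux through a spanning disc, which vanishes by hypothesis, so the symplectic isotopy can be corrected to a Hamiltonian one, and careful cutoffs control the support, $C^0$-size, and $L^{\infty}$-norm.

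The main difficulty is that the zero-area hypothesis is global: the whole disc between $\gamma_0$ and $\gamma_1$ has area zero, but the individual pieces over $[t_i,t_{i+1}]$ need not. To address this I would introduce intermediate curves $\gamma_0=c_0,c_1,\ldots,c_N=\gamma_1$ with $c_i$ and $c_{i-1}$ differing only on $[t_{i-1},t_i]$, and adjust each $c_i$ inside the $2\varepsilon$-tube around $\gamma_0$ by attaching a small loop whose symplectic area exactly cancels the area defect on the corresponding sub-interval. Since the sum of the defects equals the total symplectic area, which is zero, this choice is globally consistent. Applying the local lemma to each pair $(c_{i-1},c_i)$ inside $B_i$ produces Hamiltonians $F_i$ which are concatenated in time to give the desired $F$; the $C^0$-bound $d_{C^0}(\varphi^s,\mathrm{Id})<2\varepsilon$ and the support condition are automatic from the choice of the balls $B_i$, while $\|F\|_\infty\le\rho$ is achieved by slowing the $F_i$ down in time. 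The hardest step, in my view, is performing the area redistribution so that all intermediate curves $c_i$ remain genuinely embedded and stay inside the prescribed $2\varepsilon$-tube while realising exact zero area on every sub-interval.
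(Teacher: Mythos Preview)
The paper does not contain a proof of this statement. Theorem~\ref{h-principleCurves} is quoted in the preliminaries as a known result from \cite{BHS} and is used as a black box later (in the construction of the $C^0$-counterexample to the Lagrangian Arnold conjecture). The only content the paper adds is the remark that in the exact case the area condition is equivalent to equality of actions. Hence there is no ``paper's own proof'' to compare your proposal against.

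That said, your outline is broadly in the spirit of the argument in \cite{BHS}: subdivide the parameter interval, work in Darboux balls, and redistribute area so that each local piece has zero symplectic area. One point in your sketch deserves care: the step ``a relative Moser argument \ldots promotes the smooth ambient isotopy to a symplectic one'' is not quite how the local lemma is proved. A generic smooth ambient isotopy is nowhere near symplectic, and Moser's trick does not convert arbitrary diffeotopies into symplectomorphisms. The cleaner route is to note that any embedded arc in a symplectic manifold is automatically isotropic, so one can invoke the isotropic isotopy extension theorem to obtain a \emph{Hamiltonian} isotopy carrying $\alpha_0$ to $\alpha_1$ directly; the zero-area condition then enters when one wants to keep the isotopy $C^0$-small and supported in the given ball (this is where the area-redistribution step you identify as the hardest part really matters). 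The concatenation and slowing-down arguments you describe for the $C^0$ and $L^\infty$ bounds are correct.
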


\begin{rem}
    If $(M,\omega=d\lambda)$ is an exact symplectic manifold, the condition on the symplectic area of the $\pi_2(M,\gamma_1\#\overline{\gamma_0})$ is equivalent to $\int_0^1\gamma_0^*\lambda=\int_0^1\gamma_1^*\lambda$.
\end{rem}

\section{Contact quantitative $h$-principle for isotropic embeddings}

Let us introduce some notation and terminology. Let $A\subset M$ be a subset of a manifold $M$, then $Op(A)\subset M$ denotes sufficiently small open neighbourhood of $A$. Let $V\subset\mathbb{R}^{2n+1}$ be an open subset, $M$ a manifold of dimension $k<n$, and assume that the tangent bundle $TM$ is trivial. Fix a basis $(X_1,\ldots,X_k)$ of $TM$, and let $(Y_1,\ldots,Y_{2n+1})$ be a basis of $TV$, such that $\xi_V=\mathrm{span}(Y_1,\ldots,Y_{2n})$. Let $\mathcal{M}(k,2n+1)$ be a space of $(k,2n+1)$ matrices of rank $k$, and denote $G^{\mathrm{iso}}(k,2n+1)$ its subspace consisting of matrices which have all zeros in the $(2n+1)$-th row. A monomorphism $F:TM\rightarrow TV$ can be seen as the map $F :M\rightarrow\mathcal{M}(k,2n+1)$ by writing in $i$-th column the coordinates of $F(X_i)$ in the basis $(Y_1,\ldots,Y_{2n+1})$. Then isotropic condition can be written as $F(M)\subset G^{\mathrm{iso}}(k,2n+1)$.\\

If $A\subset M$, a homotopy $f:M\rightarrow G^{\mathrm{iso}}(k,2n+1)$ rel $Op(A)$ is a continuous map $F:[0,1]\times M\rightarrow G^{\mathrm{iso}}(k,2n+1)$ such that $F(t,x)=f(x)$ for $x\in Op(A)$. We say that a homotopy $G:[0,1]^2\times M\rightarrow G^{\mathrm{iso}}(k,2n+1)$ between $F_0,F_1:[0,1]\times M\rightarrow G^{\mathrm{iso}}(k,2n+1)$ is relative to $Op(A)$ and $\{0,1\}$ if $G(s,t,x)=F_0(t,x)=F_1(t,x)$ for all $(s,t,x)\in [0,1]^2\times Op(A)$, and $G(s,i,\cdot)=F_0(i,\cdot)$ for all $(s,i)\in[0,1]\times\{0,1\}$.\\

By abuse of notation we set $D^k=[-1,1]^k$ and consider the case $M=D^k$. Using relative $C^0$-dense version of the Theorem \ref{thm12.4.1}, we now formulate the $h$-principle for subcritical isotropic embeddings of discs.

\begin{thm}[Parametric $C^0$-dense relative $h$-principle for isotropic discs]\label{h-principleDiscs} Let $k<n$:
\begin{enumerate}[label=(\alph*)]
    \item Let $\rho:D^k\rightarrow\mathbb{R}^{2n+1}$ be an embedding whose restriction to a neighbourhood of a closed subset $A\subset D^k$ is isotropic. Assume that $d\rho$ is homotopic to a map $G:D^k\rightarrow G^{\mathrm{iso}}(k,2n+1)$ rel $Op(A)$. Then, for any $\varepsilon>0$, there exists an isotropic embedidng $u:D^k\rightarrow\mathbb{R}^{2n+1}$ such that $u|_{Op(A)}=\rho|_{Op(A)}$, $d_{C^0}(\rho,u)<\varepsilon$ and $du:D^k\rightarrow G^{\mathrm{iso}}(k,2n+1)$ is homotopic to $G$ rel $Op(A)$.
    \item Let $u_0,u_1:D^k\rightarrow(\mathbb{R}^{2n+1},\xi_{\mathrm{std}})$ be isotropic embeddings which coincide on a neighbourhood of a closed subset $A\subset D^k$. Let $G:[0,1]\times D^k\rightarrow G^{\mathrm{iso}}(k,2n+1)$ be a homotopy between $du_0,du_1$ rel $Op(A)$ and $\rho_t:D^k\rightarrow\mathbb{R}^{2n+1}$ a homotopy between $u_0,u_1$ rel $Op(A)$. For any $\varepsilon>0$, there exists an isotropic isotopy $u_t:D^k\rightarrow\mathbb{R}^{2n+1},\,t\in[0,1]$ relative to $Op(A)$ such that $d_{C^0}(\rho_t,u_t)<\varepsilon$ and $\{du_t\}$ is homotopic to $G$ rel $Op(A)$ and $\{0,1\}$.
\end{enumerate}
     
\end{thm}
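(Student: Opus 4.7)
The plan is to derive both parts from the non-quantitative parametric $h$-principle (Theorem \ref{thm12.4.1}, whose proof following \cite{EM02} is in the appendix) via a cube subdivision argument that upgrades it to a $C^0$-dense version.

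For part (a), I would first partition $D^k$ into a fine grid of cubes $\{Q_\alpha\}$ of diameter less than $\varepsilon/4$. Inductively on the skeleton dimension, I would perturb $\rho$ by a $C^0$-small amount on a neighborhood of the grid's $(k-1)$-skeleton so that it becomes isotropic there, while leaving $\rho|_{Op(A)}$ unchanged and preserving the homotopy class of $d\rho$ rel $Op(A)$ in $G^{\mathrm{iso}}(k, 2n+1)$. This skeletal step reduces to isotropic embedding problems in dimensions $< k$, which is why the subcriticality $k < n$ is needed: we retain plenty of normal room to perform the perturbation within a narrow tube. Once $\rho$ is isotropic on a neighborhood of the entire skeleton, on each open cube $Q_\alpha$ I apply Theorem \ref{thm12.4.1} with the (now isotropic) boundary values fixed and with formal data $G|_{Q_\alpha}$. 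Because the isotropic differential relation is open and ample in the subcritical regime, the output lies in the same path-component of formal isotropic embeddings rel boundary; if the cube is small enough this component can be realized inside the $\varepsilon$-neighborhood of $\rho(Q_\alpha)$, yielding $d_{C^0}(\rho, u) < \varepsilon$ globally, and the homotopy class of $du$ matches $G$ by construction.

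For part (b), I would run the same strategy in the parametric setting, treating the family $\{u_t\}_{t \in [0,1]}$ as an embedding of the cylinder $[0,1] \times D^k \to \mathbb{R}^{2n+1}$ where isotropicity is only required in the $D^k$ directions. The closed subset for the relative condition becomes $A' = ([0,1] \times A) \cup (\{0,1\} \times D^k)$, on which genuine isotropic embeddings are already given. Subdividing $D^k$ (and, if needed, $[0,1]$) into a fine grid and applying the parametric version of Theorem \ref{thm12.4.1} cube-by-cube with prescribed formal tangent data $G$ produces the required isotropic isotopy $u_t$ with $d_{C^0}(\rho_t, u_t) < \varepsilon$ and with $\{du_t\}$ homotopic to $G$ rel $Op(A)$ and $\{0,1\}$.

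The main obstacle is the localization: Theorem \ref{thm12.4.1} on its own only guarantees existence of an isotropic embedding in the prescribed formal class, with no $C^0$ control. Upgrading to $C^0$-closeness requires the skeleton-by-skeleton inductive construction on a fine grid, arranged so that the relative $h$-principle applied on each small cube produces an embedding confined to a small tube around $\rho$. A secondary technical issue is bookkeeping the homotopy class of $du$ (respectively $du_t$) throughout all the inductive perturbations, especially at the interfaces between the skeleton and the cube interiors, so that the final map is genuinely homotopic to $G$ relative to $Op(A)$ (and $\{0,1\}$ in the parametric case). These issues are standard for subcritical isotropic embeddings, but the proof requires spelling out the inductive gluing in enough detail to extract the quantitative estimate.
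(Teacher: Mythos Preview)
The paper does not give a standalone proof of this theorem; it is presented as a direct reformulation of the relative $C^0$-dense version of Theorem~\ref{thm12.4.1} (see the sentence preceding the statement and the remark following Theorem~\ref{thm12.4.1} in the appendix). In the Eliashberg--Mishachev framework the $C^0$-density is not obtained by any grid argument: it is already present at the level of the holonomic approximation (Theorem~\ref{thm4.4.1}), which produces embeddings $C^0$-close to the original over a neighbourhood of a core, and this control propagates through Theorems~\ref{thm4.5.1}, \ref{thm12.3.1} and~\ref{thm12.4.1}. For a compact disc the core can be taken to be an arbitrarily large subdisc, so $C^0$-closeness on $Op(\text{core})$ becomes global $C^0$-closeness.

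Your route is therefore genuinely different: you try to manufacture $C^0$-density from the bare (non-dense) $h$-principle by subdividing the domain. This can be made to work, but the decisive step is not the one you emphasize. Making the cube $Q_\alpha$ small does \emph{not} by itself force the output of Theorem~\ref{thm12.4.1} to stay near $\rho(Q_\alpha)$---the $h$-principle on a tiny disc can still produce an isotropic embedding that wanders far away. What you actually need is to apply Theorem~\ref{thm12.4.1} with \emph{target} a small Darboux ball $B_\alpha \supset \rho(Q_\alpha)$ rather than all of $\mathbb{R}^{2n+1}$; the output is then confined to $B_\alpha$ for free. This is legitimate (the $h$-principle holds in any contact manifold and the formal data restricts), but your sentence ``if the cube is small enough this component can be realized inside the $\varepsilon$-neighbourhood'' skips exactly this point. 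The same remark applies to your skeleton step, which is itself a $C^0$-dense statement in lower dimension and so requires either the same small-target trick or an explicit induction on $k$ that you do not set up. Once these gaps are filled your argument is a valid alternative; the paper's route via holonomic approximation is shorter because the $C^0$-control is built in from the start.
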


\subsection{The case of discs}

We prove a contact version of the quantitative $h$-principle for subcritical isotropic discs, using analogous argument as one in the symplectic case provided in \cite{BO21}. Again, by abuse of notation we denote $D^k=[-1,1]^k$.

\begin{prop}\label{h-principleExtended}
    Let $V\subset\mathbb{R}^{2n+1}$ be a bounded open set, $l<k<n$, $I\subset\mathbb{R}$ interval, and $u_0,u_1:D^{l}\times I^{k-l}\rightarrow (V,\xi_{\mathrm{std}})$ subcritical isotropic embeddings which coincide on $Op(\partial D^l\times I^{k-l})$. Assume $u_0$ and $u_1$ are homotopic in $V$ relative to $Op(\partial D^l\times I^{k-l})$, and moreover their differentials $du_0, du_1$ are homotopic in $G^{\mathrm{iso}}(k,2n+1)$ relative to $Op(\partial D^l\times I^{k-l})$ via homotopy $G:[0,1]\times\mathring{D}^l\times I^{k-l}\rightarrow G^{\mathrm{iso}}(k,2n+1)$.
    
    Then there exists a contact isotopy $\psi_t$ with compact support in $V$ such that $\psi_1\circ u_0=u_1$, and $d\psi^t\circ du_0$ is homotopic to $G_t$ rel $Op(\partial D^l\times I^{k-l})$ and $\{0,1\}$.
\end{prop}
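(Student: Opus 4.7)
The plan is to combine the parametric $C^0$-dense relative $h$-principle from Theorem \ref{h-principleDiscs}(b) with a contact isotopy extension and a contact Hamiltonian cutoff, mirroring the argument of \cite{BO21} in the symplectic case.

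First I apply Theorem \ref{h-principleDiscs}(b) to the given homotopy $\rho_t$ between $u_0$ and $u_1$ and to the homotopy $G$ of isotropic monomorphisms. Choosing $\varepsilon>0$ small enough that the $\varepsilon$-neighborhood of the compact set $\bigcup_{t\in[0,1]}\rho_t(D^l\times I^{k-l})$ still lies inside the bounded open set $V$, I obtain a smooth isotropic isotopy $u_t:D^l\times I^{k-l}\to V$, relative to $Op(\partial D^l\times I^{k-l})$, interpolating between $u_0$ and $u_1$, with $d_{C^0}(\rho_t,u_t)<\varepsilon$ and such that $\{du_t\}$ is homotopic to $G$ in $G^{\mathrm{iso}}(k,2n+1)$ rel $Op(\partial D^l\times I^{k-l})$ and $\{0,1\}$.

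Next I invoke the relative contact isotopy extension theorem for isotropic submanifolds: a smooth family of isotropic embeddings of a compact manifold, constant in $t$ near the boundary, can be realized as $\widetilde\psi_t\circ u_0$ for a smooth family of contact embeddings $\widetilde\psi_t$ defined on an open neighborhood $W$ of the track $\bigcup_t u_t(D^l\times I^{k-l})$, whose generating time-dependent contact vector field $X_t$ vanishes near $u_0(\partial D^l\times I^{k-l})$. This follows by the standard argument based on the Darboux–Weinstein normal form for isotropic submanifolds. Fixing a contact form $\alpha$ with $\ker\alpha=\xi_{\mathrm{std}}$, let $H_t=\alpha(X_t)$ be the associated contact Hamiltonian, and let $\chi:V\to[0,1]$ be a smooth cutoff, compactly supported in $V$ and equal to $1$ on a small open neighborhood of the track. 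The contact isotopy $\psi_t$ generated by $\chi H_t$ is compactly supported in $V$ and agrees with $\widetilde\psi_t$ on a neighborhood of the track, hence satisfies $\psi_t\circ u_0=u_t$ on $D^l\times I^{k-l}$. In particular $\psi_1\circ u_0=u_1$ and $d\psi^t\circ du_0=du_t$, which by construction is homotopic to $G_t$ rel $Op(\partial D^l\times I^{k-l})$ and $\{0,1\}$.

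The main technical point to verify carefully is the relative contact isotopy extension with the vanishing condition near the boundary. Because $u_t$ is constant in $t$ on $Op(\partial D^l\times I^{k-l})$, the velocity field along $u_t$ vanishes there, so the Darboux–Weinstein construction produces an $X_t$ that already vanishes near $u_0(\partial D^l\times I^{k-l})$; the subsequent Hamiltonian cutoff then only modifies the contact dynamics well away from the moving disc, so it preserves both the relation $\psi_t\circ u_0=u_t$ and the homotopy class of $\{d\psi^t\circ du_0\}$ in $G^{\mathrm{iso}}(k,2n+1)$.
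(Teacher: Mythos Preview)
Your proposal is correct and follows essentially the same route as the paper: apply Theorem \ref{h-principleDiscs}(b) to upgrade the given homotopy to an isotropic isotopy $u_t$ fixed near $\partial D^l\times I^{k-l}$, then use the isotropic isotopy extension theorem (the paper cites Theorem 2.6.2 in \cite{Ge08}) together with a cutoff of the contact Hamiltonian to obtain a compactly supported contact isotopy $\psi_t$ with $\psi_t\circ u_0=u_t$. Your treatment is in fact slightly more explicit than the paper's about the homotopy condition on $d\psi^t\circ du_0$ and about why the cutoff does not disturb the isotopy near the boundary, but the underlying argument is the same.
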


\begin{rem}
    Non-relative version holds as well. If $u_0, u_1$ and $du_0,du_1$ are just homotopic (not necessarily relative to $Op(\partial D^l\times I^{k-l})$) we still get contact isotopy $\psi^t$ which achieves $\psi^1\circ u_0=u_1$.
\end{rem}

\begin{proof}
    Let $\rho_t:D^l\times I^{k-l}\rightarrow V$ be a homotopy between $u_0$ and $u_1$ relative to $Op(\partial D^l\times I^{k-l})$, and let $\varepsilon>0$. By Theorem \ref{h-principleDiscs} we get an isotropic isotopy $u_t:D^l\times I^{k-l}\rightarrow V,\,t\in[0,1]$ fixed on $Op(\partial D^l\times I^{k-l})$, such that $d_{C^0}(\rho_t,u_t)<\varepsilon$. Using isotropic isotopy extension theorem for $u_t$ (see Theorem 2.6.2 in \cite{Ge08}) we get a contact isotopy $\psi_t$ generated by a contact Hamiltonian $H_t$ which satisfies $H_t\circ u_t=\alpha\left(\frac{d}{dt}u_t\right)$ and $\psi_t\circ u_0=u_t$, and by cutting off if we can assume that the support of $H_t$ lays inside sufficiently small neighbourhood of the support of the isotopy $u_t$, which is compactly supported in $V$ when $\varepsilon$ is small enough.
\end{proof}

\begin{lemma}\label{lemma2.2}
    Let $A,B\subset D^k$ be closed subsets. Let $u_0,u_1:D^k\hookrightarrow\mathbb{R}^{2n+1}$ be subcritical isotropic embedding that coincide on $Op(A)$. Assume that we are given a homotopy $G_t:D^k\rightarrow G^{\mathrm{iso}}(k,2n+1)$ between $du_0$ and $du_1$ rel $Op(A)$. Let $v_t:D^k\hookrightarrow\mathbb{R}^{2n+1}$ be an isotropic isotopy between $u_0$ and $v_1$ rel $Op(A)$, such that $v_1|_{Op(B)}=u_1|_{Op(B)}$, and such that $\{dv_t|_{Op(B)}\}$ is homotopic to $\{G_t|_{Op(B)}\}$ relative to $Op(A)$ and $\{0,1\}$. Then $dv_1$ and $du_1$ are homotopic rel $Op(A\cup B)$ among maps $D^k\rightarrow G^{\mathrm{iso}}(k,2n+1)$.
\end{lemma}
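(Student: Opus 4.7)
The plan is to build a homotopy from $dv_1$ to $du_1$ rel $Op(A \cup B)$ in two stages. First I would produce a homotopy rel $Op(A)$ by concatenating the reverse of $t \mapsto dv_t$ with $t \mapsto G_t$; this will generally fail to be constant on a neighbourhood of $B$, so in a second step I would modify it near $B$ via a cutoff using a null-homotopy on $Op(B)$ extracted from the hypothesis.

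Concretely, I would set
$$P_t(x) = \begin{cases} dv_{1 - 2t}(x), & t \in [0, 1/2], \\ G_{2t - 1}(x), & t \in [1/2, 1], \end{cases}$$
a path from $dv_1$ to $du_1$ among maps $D^k \to G^{\mathrm{iso}}(k, 2n+1)$. Since $v_t$ and $G_t$ are both rel $Op(A)$, the path $t \mapsto P_t$ is constant equal to $du_0$ on $Op(A)$.

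Next, I would build a null-homotopy $Q : [0, 1]^2 \times Op(B) \to G^{\mathrm{iso}}(k, 2n+1)$ of $t \mapsto P_t|_{Op(B)}$ rel endpoints, using the hypothesis that $\{dv_t|_{Op(B)}\}$ and $\{G_t|_{Op(B)}\}$ are homotopic rel $Op(A)$ and $\{0, 1\}$. The given two-parameter homotopy lets me deform the reverse of $t \mapsto dv_t|_{Op(B)}$ into the reverse of $t \mapsto G_t|_{Op(B)}$ rel endpoints, hence $P_t|_{Op(B)}$ is homotopic rel endpoints to the concatenation of the reverse of $G_t|_{Op(B)}$ with $G_t|_{Op(B)}$, which in turn admits the standard retraction to the constant loop at $du_1|_{Op(B)}$. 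Because all these intermediate homotopies are constant on $Op(A) \cap Op(B)$ (both $dv_t$ and $G_t$ equal $du_0$ there) and at $t \in \{0, 1\}$, the resulting $Q$ automatically satisfies $Q(s, t, x) = du_0(x)$ for $x \in Op(A) \cap Op(B)$ and $Q(s, 0, x) = Q(s, 1, x) = du_1(x)$.

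Finally, I would glue via a cutoff. Choose nested open sets $U \subset \overline{U} \subset U' \subset \overline{U'} \subset Op(B)$ containing $B$, and a smooth function $\chi : D^k \to [0, 1]$ with $\chi \equiv 1$ on $U$ and $\chi \equiv 0$ off $U'$. Define
$$\tilde P_t(x) = \begin{cases} Q(\chi(x), t, x), & x \in Op(B), \\ P_t(x), & x \in D^k \setminus U'. \end{cases}$$
The two formulas agree on the overlap $Op(B) \setminus U'$ because $Q(0, t, x) = P_t(x)$ there. The map $\tilde P_t$ then equals $du_1$ on $U$ (where $\chi \equiv 1$ forces $Q(1, t, x) = du_1(x)$) and on $Op(A)$ (since $P_t = du_0$ there and $Q(s, t, x) = du_0$ on $Op(A) \cap Op(B)$), and connects $dv_1$ to $du_1$ globally, giving the required homotopy rel $Op(A \cup B)$. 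The only subtle point is bookkeeping the relative conditions on $Q$ over $Op(A) \cap Op(B)$ and at $t = 0, 1$, but these are forced by the hypotheses; the remainder is a routine cutoff argument in the open manifold $G^{\mathrm{iso}}(k, 2n+1)$.
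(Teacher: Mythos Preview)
Your argument is correct: concatenating $\overline{dv_t}$ with $G_t$ to get a path $P_t$ rel $Op(A)$, null-homotoping its restriction to $Op(B)$ using the given two-parameter homotopy and the $\bar G * G$ retraction, and then patching via a cutoff supported in $Op(B)$ is exactly the standard bookkeeping one expects here, and all the relative conditions (on $Op(A)\cap Op(B)$ and at $t\in\{0,1\}$) are handled properly. The paper itself does not spell out a proof but simply refers to the identical symplectic statement in \cite{BO21}; your write-up is a faithful elaboration of that argument and agrees with the paper's approach.
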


\begin{proof}
    This is just the contact version of Lemma 2.2 in \cite{BO21}, and the same proof works here.
\end{proof}

\begin{lemma}\label{transvLemma}
    Let $V\subset(\mathbb{R}^{2n+1},\xi_{\mathrm{std}})$ be an open subset. Let $\Sigma_1, \Sigma_2$ be two smooth proper submanifolds of $V$, each of dimension at most $n-1$, which have no intersections in a neighbourhood of $\partial V$. Then there exists an arbitrarily small contact isotopy $(\phi^t)_{t\in[0,1]}$ with compact support in $V$, such that $\phi^1(\Sigma_1)\cap\Sigma_2=\emptyset$.
\end{lemma}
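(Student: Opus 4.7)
The plan is to prove the lemma by a parametric transversality argument inside the group of compactly supported contactomorphisms of $V$. The dimension count
\[
    \dim \Sigma_1 + \dim \Sigma_2 \le 2(n-1) = 2n - 2 < 2n + 1 = \dim V
\]
shows that any perturbation of $\Sigma_1$ that meets $\Sigma_2$ transversely inside $V$ must be disjoint from it. The entire problem therefore reduces to realising a generic, $C^0$-small perturbation of $\Sigma_1$ through a compactly supported contact isotopy.

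First I would localise. Since $\Sigma_1, \Sigma_2$ are already disjoint on some open neighbourhood $N$ of $\partial V$, it is enough to perturb inside the compact set $K = V \setminus N$; by properness each $\Sigma_i \cap K$ is a compact smooth submanifold with boundary, so from now on I may treat $\Sigma_1, \Sigma_2$ as compact, with the understanding that the isotopy is required to be supported away from $N$.

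Next I would build a finite-dimensional family of contact Hamiltonians rich enough to cover every tangent direction along $\Sigma_1$. Cover $\Sigma_1$ by finitely many small open balls $B_1, \dots, B_m \subset V \setminus N$, pick a centre $p_j \in B_j$ for each, and on each $B_j$ choose compactly supported contact Hamiltonians $H_{j,1}, \dots, H_{j,2n+1}$ whose contact vector fields at $p_j$ span $T_{p_j} V$; such Hamiltonians exist because, given any point and any tangent vector of a contact manifold, one can prescribe a compactly supported contact Hamiltonian whose contact vector field realises that vector at that point. Shrinking the $B_j$ if necessary, the spanning persists throughout each $B_j$. Let $\Lambda \subset \mathbb{R}^{m(2n+1)}$ be a small ball around the origin, parametrising linear combinations $H_\lambda = \sum_{j,k} \lambda_{j,k} H_{j,k}$, and form the evaluation map
\[
    \mathrm{ev} : \Lambda \times \Sigma_1 \longrightarrow V, \qquad (\lambda, x) \longmapsto \phi^1_{H_\lambda}(x).
\]
By construction the partial derivative of $\mathrm{ev}$ in the $\lambda$-direction at $(0,x)$ surjects onto $T_x V$ for every $x \in \Sigma_1$, so $\mathrm{ev}$ is a submersion on a neighbourhood of $\{0\} \times \Sigma_1$, and in particular transverse to $\Sigma_2$ there.

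The parametric transversality theorem then produces a dense set of $\lambda \in \Lambda$ for which $\mathrm{ev}(\lambda, \cdot) : \Sigma_1 \to V$ is transverse to $\Sigma_2$, and the dimension count above upgrades transversality to emptiness $\phi^1_{H_\lambda}(\Sigma_1) \cap \Sigma_2 = \emptyset$. Since this dense set meets every neighbourhood of $0 \in \Lambda$, the parameter $\lambda$ can be taken arbitrarily small, yielding a compactly supported contact isotopy $(\phi^t_{H_\lambda})_{t \in [0,1]}$ with $\max_t d_{C^0}(\phi^t_{H_\lambda}, \mathrm{Id})$ as small as desired. The point that needs the most care is the simultaneous genericity and smallness of $\lambda$, but this is automatic from density of the transversality-achieving set; a secondary check is that for $\lambda$ close enough to $0$ the entire flow $(\phi^t_{H_\lambda})$ remains compactly supported in $V$, which follows from continuity of the flow in $\lambda$.
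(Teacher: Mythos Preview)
Your argument is correct: a finite spanning family of compactly supported contact Hamiltonians gives a submersive evaluation map near $\lambda=0$, and parametric transversality plus the subcritical dimension count finishes the job. A couple of expository points could be tightened---the order of ``choose Hamiltonians supported in $B_j$'' versus ``shrink $B_j$'' is slightly circular as written (better: pick Hamiltonians spanning at $p_j$, let $U_j$ be where they still span, multiply by a cutoff equal to $1$ on $U_j$, then cover $\Sigma_1\cap K$ by the $U_j$), and the compact support of $\phi^t_{H_\lambda}$ is automatic for all $\lambda$, not just small ones, since each $H_{j,k}$ is compactly supported---but the mathematics is sound.

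The paper takes a quite different, more hands-on route that exploits the ambient linear structure of $(\mathbb{R}^{2n+1},\xi_{\mathrm{std}})$. It considers the difference map $F:\Sigma_1\times\Sigma_2\to\mathbb{R}^{2n+1}$, projects to the $xy$-hyperplane $\mathbb{R}^{2n}$, and uses $\dim(\Sigma_1\times\Sigma_2)<2n$ to find an arbitrarily small $v\in\mathbb{R}^{2n}$ missed by $\pi_{xy}\circ F$. An explicit contact isotopy---essentially a translation by $-tv$ in the $xy$-directions with a linear $z$-correction---then separates the images, and a cutoff yields compact support. So the paper replaces your abstract transversality machine with a single Sard-type step and a concrete formula. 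Your approach has the advantage of working verbatim in an arbitrary contact manifold, while the paper's is shorter, entirely explicit, and avoids setting up the parametrised family.
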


\begin{proof}
    It is enough to show that if $f_1:\Sigma_1\rightarrow V$ and $f_2:\Sigma_2\rightarrow V$ are smooth proper maps such that $f_1(x_1)\neq f_2(x_2)$ for $x_1,x_2$ near $\partial V$, then there exists an arbitrarily small contact isotopy $(\phi^t)_{t\in[0,1]}$, such that $\phi^1\circ f_1(\Sigma_1)\cap f_2(\Sigma_2)=\emptyset$.
    
    Consider $\mathbb{R}^{2n+1}(z,x_1,y_1,\ldots,x_n,y_n)$ with the contact form $\alpha=dz-\sum_{i=1}^ny_idx_i$. Let $K\subset V$ be a compact set such that $f_1(V\setminus K)\cap f_2(V\setminus K)=\emptyset$. Define a smooth map
    \[F:\Sigma_1\times\Sigma_2\rightarrow\mathbb{R}^{2n+1}\quad F(x_1,x_2)=f_1(x_1)-f_2(x_2).\]
    Let $\pi_{xy}:\mathbb{R}^{2n+1}\rightarrow\mathbb{R}^{2n},\,(z,x_1,\ldots,y_n)\mapsto (x_1,\ldots,y_n)$. Since $\mathrm{dim}\,\Sigma_1\times\Sigma_2<2n$ we conclude that $\pi_{xy}\circ F(\Sigma_1\times\Sigma_2)$ has measure $0$ in $\mathbb{R}^{2n}$. This in particular means that we can find arbitrarily small (in norm) vector $v\in\mathbb{R}^{2n}\setminus\pi_{xy}\circ F(\Sigma_1\times\Sigma_2)$. Define contact isotopy
    \[\varphi^t(z,x_1,y_1,\ldots,x_n,y_n)=(0,-tv)+(z-\sum_{i=1}^n tv_{y_i}x_i,x_1,y_1,\ldots,x_n,y_n)\]
    where $v_{y_i}$ is the $y_i$-coordinate of $v$. Note that
    \[\pi_{x,y}(\varphi^1\circ f_1(x_1)-f_2(x_2))=-v+\pi_{x,y}\circ F(x_1,x_2)\neq 0^{2n},\]
    therefore $\varphi^1\circ f_1(\Sigma_1)\cap f_2(\Sigma_2)=\emptyset$.
    
    Let $M=\max_{(z,x_1,\ldots,y_n)\in K}|x_1|+\cdots+|x_n|$. Then $\varphi^1(x)-x=(-\sum_{i=1}^n v_{y_i}x_i,-v)\in \left(||v||\cdot[-M,M]\right)\times\{-v\}$ for $x\in V$, which means that $\varphi^1$ can be arbitrarily close to identity if the norm of $v$ is small enough. Let $f\in C^{\infty}_c(V)$ be a cut-off function such that $f|_K=1$. Define $\phi^t=\phi^t_{fH}$, where $H=\alpha(\frac{d}{dt}\phi^t)$ is a contact Hamiltonian generating $\phi^t$. Finally, we have $\phi^1\circ f_1(\Sigma_1)\cap f_2(\Sigma_2)=\emptyset$ provided norm of $v$ is sufficiently small.
\end{proof}

For the sake of simplicity assume that $D^k=[-1,1]^k$, $D^k(\mu)=[-1-\mu,1+\mu]^k$ and let $u_0,u_1:D^k\rightarrow V\subset\mathbb{R}^{2n+1}$ be subcritical isotropic embeddings. Suppose that $F:D^k\times[0,1]\rightarrow V$ is a homotopy from $u_0$ to $u_1$ of size less than $\varepsilon$. Using Lemma \ref{transvLemma}, we can further assume that the images of $u_0$ and $u_1$ are disjoint. Since $2(k+1)<2n+1$, we can approximate homotopy $F$ by a smooth embedding $\widetilde{F}$ such that the size of $\widetilde{F}$ is less than $\varepsilon$. Additionally, we can use normal neighbourhood of the image of $\widetilde{F}$ in order to extend $\widetilde{F}$ to a smooth embedding (which we again denote by $\widetilde{F}$):
\[\widetilde{F}:D^k(\mu)\times[-\mu,1+\mu]\times[-\mu,\mu]^{2n-k}\hookrightarrow V.\]

Let $\nu=\frac{1}{N}$ for some large $N\in\mathbb{N}$. Consider the grid $\Gamma_0:=(\nu\mathbb{Z})^k\cap D^k$ which generates a cellular decomposition of $D^k$. Denote by $\Gamma_l$ the union of $l$-faces. The set of $k$-faces has a natural integer-valued distance $d_k$, such that $d_k(x,x')$ is the minimal $m$ such that there exists a sequence $x=x_0,x_1,\ldots,x_m=x'$ of $k$-faces such that $x_i\cap x_{i+1}\neq\emptyset$ for each $i\in\{0,\ldots,m-1\}$. Fix $\eta<\nu/2$, and for each $x\in\Gamma_0$ let $U_x$ be the $\eta$-neighbourhood of $\{x\}\times[0,1]\times\{0\}^{2n-k}$. Define $W_x:=\widetilde{F}(U_x)$. For each $k$-face $x_{k}$ let $U_{x_k}$ be the $\eta$-neighbourhood of $x_k\times[0,1]\times\{0\}^{2n-k}$, and set $W_{x_k}:=\widetilde{F}(U_{x_k})$. Additionally, for each $k$-face $x_k$ and each $m\geq 0$, define $W^m_{x_k}:=\bigcup_{d_k(x_k,x_k')\leq m}W_{x_k'}$ (note that $W^m_{x_k}$ is a topological ball). Finally, define $W:=\widetilde{F}(U)$, where $U$ is the $\eta$-neighbourhood of $D^k\times[0,1]\times\{0\}^{2n-k}$.

\begin{prop}\label{prop3.3}
    There exists a sequence of Hamiltonian isotopies $\{\Psi^t_l\}_{l=0}^k$ supported inside $W$, and embeddings $v_0:=\Psi^1_0\circ u_0,\,v_{l}=\Psi^1_l\circ v_{l-1}$ for $l\in\{1,\ldots,k\}$ such that
    \begin{enumerate}[label=(\Roman*)]
        \item $v_l|_{Op(\Gamma_l)}=u_1|_{Op(\Gamma_l)}$ for each $l\in\{0,\ldots,k\}$,
        \item $v_l(x_k)\subset W^{3^l-1}_{x_k}$ for each $k$-face $x_k$ and $l\in\{0,\ldots,k-1\}$,
        \item $\Psi^t_0(W_{x_k})\subset W_{x_k}$, $\Psi^t_l(W_{x_k})\subset W^{2\cdot 3^l-1}_{x_k},\,l\in\{1,\ldots,k\}$ for each $k$-face $x_k$,
        \item $v_l(\mathring{x}_{l+1})\cap u_1(\mathring{x}_{l+1}')=\emptyset$ for pairs of distinct $(l+1)$-faces $x_{l+1}$ and $x_{l+1}'$ and $l\in\{0,\ldots,k-1\}$,
        \item $dv_l$ and $du_1$ are homotopic $rel$ $Op(\Gamma_l)$ inside $G^{\mathrm{iso}}(k,2n+1)$, for $l\in\{0,\ldots,k-1\}$.
    \end{enumerate}
\end{prop}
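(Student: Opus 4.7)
The plan is to proceed by induction on $l$, at each stage applying Proposition \ref{h-principleExtended} locally on each $l$-face of the cellular decomposition, and using Lemma \ref{transvLemma} to enforce the disjointness condition (IV) and Lemma \ref{lemma2.2} to propagate the homotopy condition (V).

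For the base case $l=0$, observe that for each vertex $x\in\Gamma_0$ the path $t\mapsto\widetilde{F}(x,t,0)$ joins $u_0(x)$ to $u_1(x)$ inside $W_x$, and the sets $\{W_x\}_{x\in\Gamma_0}$ are pairwise disjoint because $\eta<\nu/2$. I apply Proposition \ref{h-principleExtended} with $l=0$ (so $\partial D^0=\emptyset$ and there is no relative condition) on a germ neighbourhood of each vertex, getting a contact isotopy supported in $W_x$ which drags the germ of $u_0$ at $x$ to the germ of $u_1$ at $x$. Concatenating over vertices produces $\Psi_0^t$; condition (I) holds by construction, (III) is immediate since $\Psi_0^t(W_{x_k})\subset W_{x_k}$, (II) and (V) are trivial at level $0$, and (IV) follows after a small perturbation via Lemma \ref{transvLemma}.

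For the inductive step I work on each $l$-face $x_l$ separately. By the inductive hypothesis (I) at level $l-1$, $v_{l-1}$ and $u_1$ already agree on $Op(\partial x_l)$. I thicken $x_l$ to a slab $x_l\times I^{k-l}\subset D^k$ and identify it with $D^l\times I^{k-l}$. A homotopy from $v_{l-1}$ to $u_1$ on this slab is obtained by composing $\widetilde{F}$ with the previously constructed $\Psi_{l-1}^t\circ\cdots\circ\Psi_0^t$ and then modifying it to be relative to $Op(\partial D^l\times I^{k-l})$; the homotopy of differentials in $G^{\mathrm{iso}}(k,2n+1)$ is supplied by condition (V) at level $l-1$. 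Proposition \ref{h-principleExtended} then yields a contact isotopy $\psi_{x_l}^t$, supported in a small neighbourhood of $v_{l-1}(x_l)\cup u_1(x_l)$, achieving $\psi_{x_l}^1\circ v_{l-1}=u_1$ near the thickened $x_l$. These isotopies can be concatenated into $\Psi_l^t$: a given $W_{x_k}$ is touched by only a bounded number of $l$-faces, and each $\psi_{x_l}^t$ is supported in boundedly many adjacent $W_{x_k'}$'s, so tracking the propagation gives $\Psi_l^t(W_{x_k})\subset W_{x_k}^{2\cdot 3^l-1}$ and $v_l(x_k)\subset W_{x_k}^{3^l-1}$. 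Condition (V) is then propagated by Lemma \ref{lemma2.2} with $A=\Gamma_{l-1}$ and $B$ the closure of $\Gamma_l\setminus\Gamma_{l-1}$, and (IV) at level $l$ follows by a further small contact perturbation from Lemma \ref{transvLemma} applied to pairs of distinct $(l+1)$-faces, whose dimensions are at most $k-1\leq n-2<n-1$ as the lemma requires.

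The main obstacle I expect is the combinatorial bookkeeping of the support sizes to verify inductively the $3^l$-bounds in (II), (III). This demands choosing $\eta$ small enough at the outset so that each local isotopy $\psi_{x_l}^t$ is strictly confined to a small cluster of neighbouring $W_{x_k}$'s, and carefully checking that the composition across all $l$-faces spreads the support by the predicted factor; the argument is parallel to the symplectic one in \cite{BO21}, the novel contact-geometric content being isolated in Proposition \ref{h-principleExtended}.
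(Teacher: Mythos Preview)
Your overall architecture---induction on $l$, local application of Proposition \ref{h-principleExtended} on each $l$-face, Lemma \ref{transvLemma} for (IV), Lemma \ref{lemma2.2} for (V)---matches the paper, and the base case is handled correctly. However, there is a genuine gap in the inductive step.

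You say that Proposition \ref{h-principleExtended} yields an isotopy $\psi_{x_l}^t$ ``supported in a small neighbourhood of $v_{l-1}(x_l)\cup u_1(x_l)$'' and then concatenate these over all $l$-faces. The problem is that for two $l$-faces $x_l,x_l'$ sharing an $(l-1)$-face, the sets $v_{l-1}(x_l)\cup u_1(x_l)$ and $v_{l-1}(x_l')\cup u_1(x_l')$ meet along $u_1(x_l\cap x_l')$, so the supports of $\psi_{x_l}^t$ and $\psi_{x_l'}^t$ will overlap. Once they overlap, $\psi_{x_l'}^t$ can destroy the agreement $\psi_{x_l}^1\circ v_{l-1}=u_1$ that you already achieved on $Op(x_l)$, and property (I) fails. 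Bounding the \emph{number} of $l$-faces touching a given $W_{x_k}$ does nothing to prevent this interference; what is needed is actual \emph{disjointness} of the supports.

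The paper resolves this by using property (IV) from level $l-1$ as an \emph{input}, not just an output. One first shrinks each $x_l$ to a closed box $\hat{x}_l\Subset\mathring{x}_l$ (so adjacent faces no longer touch), then chooses a homotopy $\sigma_{x_l}:\hat{x}_l\times[0,1]\to W^{3^{l-1}-1}_{x_k}$ between $v_{l-1}|_{\hat{x}_l}$ and $u_1|_{\hat{x}_l}$ inside the topological ball $W^{3^{l-1}-1}_{x_k}$. Property (IV) at level $l-1$, together with $l<n$ and a general-position argument, allows one to arrange that the images $\mathrm{Im}\,\sigma_{x_l}$ admit pairwise disjoint regular neighbourhoods $\mathcal{V}_{x_l}$. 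Proposition \ref{h-principleExtended} is then applied with support in $\mathcal{V}_{x_l}$, and disjointness guarantees that the composition $\psi_l^t=\circ_{x_l}\psi_{x_l}^t$ preserves each individual achievement, yielding (I). This is the mechanism you are missing; your proposal treats (IV) only as something to be enforced after the fact, whereas it is essential for making the inductive construction well-defined in the first place. A minor aside: the $(l+1)$-faces have dimension $l+1\leq k\leq n-1$, not $k-1$ as you wrote, but this still satisfies the hypothesis of Lemma \ref{transvLemma}.
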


\begin{proof}
    Since disc is contractible, there exists a homotopy $G_t:D^k\rightarrow G^{\mathrm{iso}}(k,2n+1)$ between $du_0$ and $du_1$.\\

    We begin with the $0$-skeleton. Pick $\rho<\eta$ and for each $x\in\Gamma_0$ let $D_{\rho}(x)$ be a $\rho$-neighbourhood of $x$ in $D^k(\mu)$. Note that $\widetilde{F}$ provides an isotopy between ${u_0}|_{D_{\rho}(x)}$ and ${u_1}|_{D_{\rho}(x)}$ supported in $W_x$, therefore we can use Proposition \ref{h-principleExtended} (at this point we do not need the relative version) to get a Hamiltonian isotopy $\psi^t_{x}$ supported in $W_x$ such that $\psi^1_x\circ {u_0}|_{D_{\rho}(x)}={u_1}|_{D_{\rho}(x)}$, and $d\psi^t_x\circ du_0|_{D_{\rho}(x)}$ is homotopic to $G_t$ rel $\{0,1\}$.
    Define $\psi^t_0:=\circ\psi^t_{x}$ where the composition runs over all $x\in\Gamma_0$. 
    Then $d\psi^t_0\circ du_0|_{Op(\Gamma_0)}$ is homotopic to $G_t$ rel $\{0,1\}$, and first three properties obviously hold, so the only potential problem might be the fourth property. However, since $\psi^1_0\circ u_0=u_1$ in the neighbourhood of each point $x_0\in\Gamma_0$, we can find small $r>0$ such that the property $(IV)$ holds in all closed balls $\overline{B(x_0,r)}\subset W_{x_0}$. Now we can apply Lemma \ref{transvLemma} to get an arbitrarily small Hamiltonian perturbation $\widetilde{\psi}^t$ supported in $\bigcup_{x_0\in\Gamma_0}W_{x_0}\setminus\overline{B(x_0,r)}$ that will achieve $\widetilde{\psi}^1\circ\psi_0^1\circ u_0(x_1)\cap u_1(x_1')=\emptyset$ for every pair of different 1-faces $x_1,x_1'$. Now $\Psi^t_0:=\widetilde{\psi}^t\star\psi^t_0$, verifies property $(IV)$. Since $\widetilde{\psi}^t=\mathrm{Id}$ in $Op(\Gamma_0)$, we still have $d\Psi^t_0\circ du_0|_{Op(\Gamma_0)}$ is homotopic to $G_t$ rel $\{0,1\}$, and the first three properties still remain true if the perturbation is small enough.\\
    
    Assume that the sequence $\{\Psi^t_i\}_{i=0}^{l-1}$ has been constructed, and we proceed with the induction step. Recall that $v_{l-1}|_{Op(\Gamma_{l-1})}=u_1|_{Op(\Gamma_{l-1})}$, $v_{l-1}(x_k)\subset W^{3^{l-1}-1}_{x_k}$ for each $k$-face $x_k$, and we have homotopy $G^l_t:D^k\rightarrow G^{\mathrm{iso}}(k,2n+1)$ between $dv_{l-1}$ and $du_1$ rel $Op(\Gamma_{l-1})$. Fix an $l$-face $x_l$, and let $x_k$ be the $k$-face which contains $x_l$. Let $\hat{x}_l\Subset\mathring{x}_l$ be a closed box such that $v_{l-1}|_{Op(x_l\setminus\hat{x}_l)}=u_1|_{Op(x_l\setminus\hat{x}_l)}$. Note that $u_{l-1}(\hat{x}_l)$ and $u_{1}(\hat{x}_l)$ both lay inside topologigal ball $W^{3^{l-1}-1}_{x_k}$, and coincide near boundary, hence there exists a homotopy
    \[\sigma_{x_l}:\hat{x}_l\times[0,1]\rightarrow W^{3^{l-1}-1}_{x_k}\]
    such that $\sigma_{x_l}(\cdot,0)=v_{l-1}|_{\hat{x}_l},\,\sigma_{x_l}(\cdot,1)=u_1|_{\hat{x}_l}$ and $\sigma_{x_l}|_{Op(\partial\hat{x_l}\times[0,1])}=u_1|_{Op(\partial\hat{x_l}\times[0,1])}$. Since $l<n$, we can use property $(IV)$ and general position argument to ensure that moreover $\mathrm{Im}\,\sigma_{x_l}$ admits a regular neighbourhood $\mathcal{V}_{x_l}\subset W^{3^{l-1}-1}_{x_k}$ such that $\mathcal{V}_{x_l}\cap\mathcal{V}_{x_l'}=\emptyset$ for each pair of distinct $l$-faces $x_l,x_l'$.\\
    
    Now we can use Proposition \ref{h-principleExtended} to get a Hamiltonian isotopy $\psi^t_{x_l}$ supported inside $\mathcal{V}_{x_l}$, such that $\psi^1_{x_l}\circ v_{l-1}|_{Op(\hat{x}_l)}=u_1|_{Op(\hat{x}_l)}$, and the restriction $d(\psi^t_{x_l}\circ v_{l-1})|_{Op(\partial\hat{x}_l)}$ is homotopic to $G^l_t$ relative to $Op(\partial\hat{x}_l)$ and $\{0,1\}$. Define $\psi^t_l:=\circ\psi^t_{x_l}$ where the composition runs over all $l$-faces $x_l$. We also define $\hat{v}_l:=\psi^1_l\circ v_{l-1}$. Since all $\mathcal{V}_{x_l}$ are pairwise disjoint, we get $\hat{v}_l|_{Op(x_l)}=u_1|_{Op(x_l)}$ for each $l$-face $x_l$, i.e. $\hat{v}_l$ and $u_1$ coincide on $Op(\Gamma_l)$ which verifies property $(I)$. By Lemma \ref{lemma2.2}, $\hat{v}_l$ satisfies property $(V)$.\\
    
    Let $x$ be any $k$-face, and let $x_l$ be the $l$-face such that $\mathcal{V}_{x_l}\cap W_x\neq\emptyset$. Let $x_k$ be a $k$-face containing $x_l$ as above, so that $\mathcal{V}_{x_l}\subset W^{3^{l-1}-1}_{x_k}$. Then the distance $d_k(x,x_k)$ is at most $3^{l-1}$, therefore we have $\mathcal{V}_{x_l}\subset W^{3^{l-1}-1}_{x_k}\subset W_x^{2\cdot 3^{l-1}-1}$. As a result (since $\psi^t_l$ is supported in disjoint union of sets $\mathcal{V}_{x_l}$) we get
    \begin{equation}\label{eq.subset}
        \psi^t_l(W_x)\subset W^{2\cdot 3^{l-1}-1}.
    \end{equation}
    It can happen that $\hat{v}_l$ fails to satisfy property $(IV)$, i.e. there might be two distinct $(l+1)$-faces $x_{l+1},x_{l+1}'$ such that $\hat{v}_l(\mathring{x}_{l+1})\cap u_1(\mathring{x}_{l+1}')\neq\emptyset$. However, we have $\hat{v}_l|_{Op(\Gamma_l)}=u_1|_{Op(\Gamma_l)}$, thus the set $\hat{v}_l(\mathring{x}_{l+1})\cap u_1(\mathring{x}_{l+1}')$ is compactly contained in $W\setminus u_1(\Gamma_l)$. By the Lemma \ref{transvLemma} we can find arbitrarily small Hamiltonian isotopy $\varphi^t_l$, compactly supported in $W\setminus\Gamma_l$, such that $v_l:=\varphi^1_l\circ\hat{v}_l$ verifies property $(IV)$. By the smallness of the flow $\varphi^t_l$ and by (\ref{eq.subset}), the flow $\Psi^t_l:=\varphi^t_l\star\psi^t_l$ satisfies $\Psi^t_l(W_x)\subset W^{2\cdot 3^l-1}_x$ for any $k$-face $x$, and thus property $(III)$ holds. Support of $\varphi^t_l$ is compactly supported in $W\setminus\Gamma_l$, so properties $(I)$ and $(V)$ still remain valid. Finally, property $(II)$ follows as well: if $x$ is any $k$-face, then by assumption, $v_{l-1}(x)\subset W^{3^{l-1}-1}_x$, so we get
    \[v_l(x)=\Psi^1_l\circ v_{l-1}(x)\subset\Psi^1_l\left(W^{3^{l-1}-1}_x\right)=\bigcup_{d_k(x,y)\leq 3^{l-1}-1}\Psi^1_l(W_y)\]
    \[\subset\bigcup_{d(x,y)\leq 3^{l-1}-1}W^{2\cdot 3^{l-1}}_y=W_x^{3^{l-1}-1+2\cdot 3^{l-1}}=W^{3^l-1}_x.\]
\end{proof}

Proposition \ref{prop3.3} immediately implies Theorem \ref{quant-h-princ-discs} for discs. Indeed, we can denote by $(\Psi^t)_{t\in[0,1]}$ reparametrized concatenation of the flows $\Psi^t_1,\Psi^t_2,\ldots,\Psi^t_k$, and by the property $(III)$ we conclude that for each $k$-face $x$ we have $\Psi^t(W_x)\subset W_x^{3^{k+1}-3}$ because $\sum_{i=1}^k2\cdot 3^i=3^{k+1}-3$. The flow $\Psi^t$ is supported in $W=\bigcup_{x\in\Gamma_k}W_x\subset V$, and if the step $\nu$ of the grid is chosen to be small enough, then for each $k$-face $x$, the diameter of $W^{3^{k+1}-1}$ is less than $\varepsilon$. This means that the size of the flow $\Psi^t$ is less than $\varepsilon$, and by the property $(I)$ we have $\Psi^1\circ u_0=u_1$ on $D^k$.\Qed

\subsection{The case of $S^1$}

We prove a contact version of the quantitative $h$-principle for subcritical isotropic embeddings of $S^1$ (Theorem \ref{Theorem1} (b)). Let $F:S^1\times[0,1]\rightarrow V$ be a homotopy between $\gamma_0$ and $\gamma_1$ of size less than $\varepsilon$. Analogously to the case of discs, we can assume that $F$ is an embedding, and we can extend $F$ to an embedding (which we again denote by $F$):
\[F:S^1\times[-\mu,1+\mu]\times[-\mu,\mu]^{2n-1}\hookrightarrow V,\]
for some small $\mu>0$. Let $\eta=\frac{1}{N}>2\delta>0,\,N\in\mathbb{N}$, and consider grid $\Gamma_0=(\mathbb{R}\cap\eta\mathbb{Z})/\mathbb{Z}\subset S^1=\mathbb{R}/\mathbb{Z}$. For each $x\in \Gamma_0$ let $U_x$ be a $\delta$-neighbourhood of $\{x\}\times[0,1]\times\{0\}^{2n-1}$ and define $W_x=F(U_x)$. Similarly, for each $1$-cell $I=[x_i,x_{i+1}]\subset S^1$, let $U_I$ be a $\delta$-neighbourhood of $I\times[0,1]\times\{0\}^{2n-1}$ and define $W_I=F(U_I)$.\\

Since $G^{\mathrm{iso}}(1,2n+1)$ is isomorphic to $\mathbb{R}^{2n}\setminus\{0\}$, we conclude that there exists a homotopy $G_t:S^1\rightarrow G^{\mathrm{iso}}(1,2n+1)$ between $d\gamma_0$ and $d\gamma_1$.\\

For each $x\in\Gamma_0$ define $I(x)=[x-\rho,x+\rho]\subset S^1$ (where $\rho<\delta$). Then $F$ gives isotopy between $\gamma_0|_{I(x)}$ and $\gamma_1|_{I(x)}$ inside $W_x$. Using $h$-principle for isotropic discs (Theorem \ref{h-principleExtended}) we get contact isotopy $\psi^t_x$ supported inside $W_x$ such that $\psi^1_x\circ \gamma_0|_{I(x)}=\gamma_1|_{I(x)}$ and $d\psi^t_x\circ d\gamma_0|_{I(x)}$ is homotopic to $G_t$ rel $\{0,1\}$.

Define $\psi_0^t:=\circ\psi^t_x$, where composition runs over all $x\in\Gamma_0$. Then $\psi_0^t\circ \gamma_0|_{Op(\Gamma_0)}=\gamma_1|_{Op(\Gamma_0)}$ and $d\psi^t_0\circ d\gamma_0|_{I(x)}$ is homotopic to $G_t$ rel $\{0,1\}$. Using Lemma \ref{transvLemma}, let $\widetilde{\phi}^t$ be contact isotopy which achieves 
\[\widetilde{\phi}^1\circ\psi^1_0\circ\gamma_0(\mathring{I})\cap\gamma_1(\mathring{I'})=\emptyset,\]
for every pair of distinct 1-cells $I,I'$. Define
\[\Psi_0^t:=\widetilde{\phi}^t\star\psi^t_0,\quad v_0:=\Psi_0^1\circ \gamma_0.\]
Note that $\widetilde{G}_t:=d\Psi^t_0\circ d\gamma_0|_{I(x)}$ is still homotopic to $G_t$ rel $\{0,1\}$, and for each 1-cell $I$ we have $v_0(I),u_1(I)\subset W_I$ and $v_0|_{Op(\partial I)}=\gamma_1|_{Op(\partial I)}$.\\
    
Consider now slightly smaller interval $\widehat{I}\Subset I\subset S^1$ and pick any homotopy
\[\sigma_{I}:\widehat{I}\times[0,1]\rightarrow W_I,\quad\sigma_I(\cdot,0)=v_0,\,\sigma_I(\cdot,1)=u_1.\]
General position argument implies that we can moreover assume that the image $\mathrm{Im}\,\sigma_I$ admits a regular neighbourhood $\mathcal{V}_{I}\subset W_I$, such that $\mathcal{V}_{I}\cap\mathcal{V}_{I'}=\emptyset$ for each pair of distinct 1-cells $I,I'$. Now we are ready to apply Theorem \ref{h-principleExtended} which gives us contact isotopies $\psi^t_I$ supported in $\mathcal{V}_I$, such that
$\psi^1_I\circ v_0|_{Op(\widehat{I})}=u_1|_{Op(\widehat{I})}$ and the restriction $d(\psi^t_I\circ v_0)|_{Op(\partial\widehat{I})}$ is homotopic to $\widetilde{G}_t$ relative to $Op(\partial\widehat{I})$ and $\{0,1\}$. Let $\Psi_1^t=\circ\psi^t_I$, where composition runs over all $1$-faces $I$. Finally we define
$\Psi^t=\Psi_1^t\star\Psi_0^t$. Note that $\Psi^1\circ\gamma_0=\gamma_1$, and assuming $\eta>0$ is small enough, size of $\Psi^t$ will be less than $\varepsilon$.\Qed

\section{Taking subcritical isotropic curve to a transverse one}
\addtocontents{toc}{\protect\setcounter{tocdepth}{2}}
\subsection{Open case}\label{sect4.2}

We prove Theorem \ref{Theorem2} (a). More precisely, we pick any transverse curve in $(V,\xi)$ and provide an explicit construction of a contact homeomorphism, supported in tubular neighbourhood of a given transverse curve, which maps some isotropic curve to the transverse one.\\

By the contact neighborhood theorem, a sufficiently small neighbourhood of a transverse curve can be contactly embedded to an open neighbourhood of $[0,1]\times\{0\}^{2n}\subset\mathbb{R}^{2n+1}$ with the contact form $dz+\sum_{j=1}^n(x_jdy_j-y_jdx_j)$. From now on we work with the transverse curve \[v:[0,1]\rightarrow\mathbb{R}^{2n+1},\quad t\mapsto(t,0,0,\ldots,0).\]

Let $x_1=\sqrt{\rho}\sin\varphi$ and $y_1=\sqrt{\rho}\cos\varphi$. Then the contact form becomes
\[dz-\rho d\varphi+\sum_{j=2}^n(x_jdy_j-y_jdx_j),\]
and from now on we will use coordinates $(z,\varphi,\rho,x_2,y_2,\ldots,x_n,y_n)$. Define a front projection as $\pi_{F}(z,\varphi,\rho,x_2,y_2,\ldots,x_n,y_n)=(z,\varphi)$.\\

\begin{defn}
    We call a curve $t\mapsto (z(t),\varphi(t)),\,t\in[0,1]$ \textbf{admissible}, if it is immersed everywhere except at finitely many singular points, and the following conditions hold
    \begin{itemize}
      \item $\mathrm{sgn}(z'(t))=\mathrm{sgn}(\varphi'(t))$ for all $t\in [0,1]$,
      \item all self intersections are transverse,
      \item function $t\mapsto\lim_{\tau\rightarrow t}\frac{z'(\tau)}{\varphi'(\tau)}$ is smooth.
    \end{itemize}
\end{defn}

Note that any admissible curve in the front projection lifts to an isotropic embedding. For instance any smooth function $f$ gives rise to an isotropic embedding
\[t\mapsto (z(t),\varphi(t),\lim_{\tau\rightarrow t}\frac{z'(\tau)}{\varphi'(\tau)},f(t),f(t),\ldots,f(t)).\]

We are going to construct a contact homeomorphism $h$ supported inside tubular neighborhood of a transverse curve $v$, which maps isotropic curve $\gamma(t)=(t,ct,1/c,0,\ldots,0)$ to $v$ (where $c>0$ is small enough).

\begin{lemma}\label{stechingLem}
    Let $\gamma(t)=(t,ct,1/c,0,0,\ldots,0)\,, c>0$ be an isotropic embedding. Let $U$ be an open neighbourhood of $\gamma([0,1])$, and let $\Sigma:[0,1]^2\rightarrow\mathbb{R}^{2n+1},\,(s,t)\mapsto(t,ct,(1-s)/c,0,0,\ldots,0)$\footnote{Local coordinates are not defined for $\rho=0$, but in this case we mean $\Sigma(1,t)=v(t)$} be an embedded surface. Then there exists a contact diffeomorphism $\psi$ supported inside $Op(\Sigma)$ which satisfies $\Sigma([0,1]^2)\subset\psi(U),\,\psi\circ\gamma=\gamma$, and $d_{C^0}(\psi,\mathrm{Id})<\frac{3}{c}$.
\end{lemma}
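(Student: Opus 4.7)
The plan is to realize $\psi$ as the time-$(-T)$ map of a contact Hamiltonian flow whose forward flow contracts the strip $\Sigma$ onto its ``isotropic edge'' $\gamma$, fixing $\gamma$ pointwise, with $T$ chosen after $U$ is given so that $\phi^T(\Sigma)\subset U$.

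In the coordinates $(z,\varphi,\rho,x_2,y_2,\ldots,x_n,y_n)$ I would set $u:=\varphi-cz$, so that $\gamma=\{u=0,\ \rho=1/c,\ x_j=y_j=0\}$ and $\Sigma=\{u=0,\ x_j=y_j=0,\ \rho\in[0,1/c]\}$. My candidate contact Hamiltonian is
\[
H=\chi\cdot (1/c-\rho)\cdot u,
\]
where $\chi$ is a smooth cutoff equal to $1$ on a thin tubular neighborhood of $\Sigma$ inside $Op(\Sigma)$ and vanishing outside a slightly larger one. Both factors $(1/c-\rho)$ and $u$ vanish along $\gamma$, and a short computation with $\alpha(X_H)=H$ and $\iota_{X_H}d\alpha=dH(R_\alpha)\alpha-dH$ gives $X_H|_\gamma=0$, so $\gamma$ is fixed pointwise by the flow $\phi^s_H$ for every $s$.

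Next I would compute $X_H|_\Sigma$: it is purely in the $\rho$-direction with $\dot\rho=c(1/c-\rho)^2$. Writing $r=1/c-\rho$ this integrates to $r(s)=r_0/(1+csr_0)$, so $\rho(s)\to 1/c$ monotonically and $\Sigma$ is preserved by the flow. For any fixed neighborhood $U$ of $\gamma$ I pick $T>0$ large enough that $\phi^T_H(\Sigma)\subset U$, and set $\psi:=\phi^{-T}_H$. This immediately yields $\Sigma\subset\psi(U)$, $\psi|_\gamma=\mathrm{Id}$, and $\mathrm{supp}\,\psi\subset Op(\Sigma)$. One technical point is that the polar coordinates $(\varphi,\rho)$ are singular on $\{\rho=0\}\supset v$, so the formula for $H$ must be recast smoothly across $v\subset\Sigma$, e.g.\ using the rotating Cartesian coordinates $\tilde x=x_1\cos(cz)-y_1\sin(cz)$, $\tilde y=x_1\sin(cz)+y_1\cos(cz)$, in which $(1/c-\rho)u$ has a smooth extension.

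The main obstacle I foresee is the quantitative bound $d_{C^0}(\psi,\mathrm{Id})<3/c$. The $\rho$-displacement of a point under $\phi^{-T}_H$ is controlled by the $\rho$-extent of $\Sigma$, hence bounded by $1/c$ up to the width of the cutoff in $\rho$. However the backward flow also displaces points in the $z$ and $\varphi$ directions at rates proportional to $u$, so I must take $\chi$ sufficiently narrow in the $u$-direction, with width of order $\ll 1/(cT)$, to keep those contributions below $1/c$ each. The constant $3$ in the target bound then accommodates the simultaneous contributions from the $\rho$, $z$, and $\varphi$ displacements, and the analogous estimate in the $(x_j,y_j)$ directions is automatic once $\chi$ is supported in a ball of radius $\ll 1/c$ there.
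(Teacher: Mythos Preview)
Your outline and the paper share the core idea---realise $\psi$ as the flow of a contact Hamiltonian that vanishes on $\Sigma$ and fixes $\gamma$ pointwise---but diverge on both the Hamiltonian and, more importantly, the mechanism for the $C^0$ bound. The paper takes $H_\lambda=(z-\varphi/c)\,\lambda\!\left(\frac{c\varphi+z}{c^2+1}\right)$ (linear in $u$, no $\rho$-factor), and then sets $\psi=\phi^\tau_{H_{1-\lambda}}\circ\phi^\tau_{H_\lambda}$; on $\Sigma$ the two factors combine to give $\psi(\Sigma(s,t))=\Sigma(se^\tau,t)$ independently of $\lambda$. The key point is that $\lambda$ is chosen so that $\mathrm{supp}(\lambda)$ and $\mathrm{supp}(1-\lambda)$ are each disjoint unions of $t$-intervals of length $\sim 1/N$. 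Consequently each of $\phi^\tau_{H_\lambda}$, $\phi^\tau_{H_{1-\lambda}}$ is supported in a disjoint union of pieces of $Op(\Sigma)$ whose diameters tend to $1/c$ as $N\to\infty$, so each factor satisfies $d_{C^0}(\cdot,\mathrm{Id})<\tfrac{3}{2c}$ \emph{purely from the support geometry}, with no dependence on $\tau$ at all, and the triangle inequality gives $3/c$.

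Your route instead makes the cutoff $\delta$-narrow in $u$ and tracks the drift $\dot z=u/c$, $\dot\varphi=u$ over the long time $T$, needing $\delta\ll 1/(cT)$. This is plausible (and indeed $u$ is conserved by your uncut Hamiltonian), but the sketch omits the transition layer of $\chi$: once $\chi$ and the other cutoffs vary, $u$ is no longer exactly conserved and the $\dot z,\dot\varphi$ formulas acquire derivative-of-cutoff terms that must be estimated over time $T$. This can likely be pushed through with enough bookkeeping, but it is genuinely more delicate than the paper's argument, where the $C^0$ bound is immediate from the $t$-splitting and requires no ODE estimate. Both arguments share the same mild sloppiness about extending the Hamiltonian smoothly across the coordinate singularity at $\rho=0$; your rotated coordinates do not by themselves resolve it, since $u=\varphi-cz$ becomes the polar angle in $(\tilde x,\tilde y)$ and $(1/c-\rho)u$ still fails to extend smoothly to $\tilde x=\tilde y=0$.
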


\begin{proof}
    We will construct $\phi$ using a Hamiltonian flow which preserves the surface $\Sigma$. More precisely, for any smooth function $\lambda:[0,1]\rightarrow[0,1]$, we will find a contact Hamiltonian $H_{\lambda}$, such that its flow $\phi_{H_{\lambda}}^{\tau}$ satisfies
     \[\phi_{H_{\lambda}}^{\tau}\left(\Sigma(s,t)\right)=\Sigma(se^{\lambda(t)\tau},t).\]
    For this it is enough to find a Hamiltonian which satisfies $X_{H_{\lambda}}\circ\Sigma(s,t)=s\lambda(t)\cdot\frac{\partial}{\partial s}\Sigma(s,t)$, which is the case if we define $H_{\lambda}$ as
    \[ H_{\lambda}=(z-\varphi/c)\cdot\lambda\left(\frac{c\varphi+z}{c^2+1}\right),\]
    inside sufficiently small neighbourhood of $\Sigma$ and $0$ outside slightly larger neighbourhood. Consider now $1-\lambda(t)$ instead of $\lambda(t)$ to get $\phi_{H_{1-\lambda}}^{\tau}(\Sigma(s,t))=\Sigma(s,e^{(1-\lambda(t))\tau},t)$. Finally, we define a contact diffeomoprhism $\psi=\phi_{H_{1-\lambda}}^{\tau}\circ\phi_{H_{\lambda}}^{\tau}$ which satisfies
    \[\psi(\Sigma(s,t))=\Sigma(se^{\tau},t).\]
    For $s_0$ small enough, $\Sigma([0,s_0]\times [0,1])\subset U$, thus for $\tau=-\log(s_0)$ we have
    \[\psi\left(\Sigma([0,s_0]\times [0,1])\right)=\Sigma([0,1]\times[0,1]),\]
    and it follows that property $\Sigma([0,1]^2)\subset\psi(U)$. Since $X_H(x)=0$ for $x\in\gamma([0,1])$ we will have $\psi\circ\gamma=\gamma$. It only remains to show that $\psi$ is $C^0$-close to the identity. Let $0=t_0<t_1<\ldots<t_{2N}=1$ such that $t_{j+1}-t_j=1/(2N)$ and $N\gg 1$. Choose $\lambda$ so that $\mathrm{supp}(\lambda)\subset\bigsqcup_{j=0}^{N-1}[t_{2j}-\varepsilon,t_{2j+1}+\varepsilon]$ and $\mathrm{supp}(1-\lambda)\subset\bigsqcup_{j=1}^{N}[t_{2j-1}-\varepsilon,t_{2j}+\varepsilon]$, where $\varepsilon=1/N^2$. This means that \[\mathrm{supp}(\phi^{H_{\lambda}}_{\tau}|_{\Sigma})\subset\bigsqcup_{j=0}^{N-1}\Sigma\left([0,1]\times[t_{2j}-\varepsilon,t_{2j+1}+\varepsilon]\right),\]
    but the diameter of $\Sigma\left([0,1]\times[t_{2j}-\varepsilon,t_{2j+1}+\varepsilon]\right)$ approaches to $\frac{1}{c}$ as $N\rightarrow\infty$. This means that the support of $\phi_{H_{\lambda}}^{\tau}$ will be disjoint union of sets of diameters less than $\frac{3}{2c}$, thus $d_{C^0}(\phi_{H_{\lambda}}^{\tau},\mathrm{Id})<\frac{3}{2c}$. Similarly $d_{C^0}(\phi_{H_{1-\lambda}}^{\tau},\mathrm{Id})<\frac{3}{2c}$, so using the triangle inequality we finish the proof.
\end{proof}

Let $\delta_1>\varepsilon_1>0$ and $\gamma_1(t)=(t,\varepsilon_1t,\frac{1}{\varepsilon_1},0,\ldots,0)$. Let $\{U_i\}_{i\geq 1}$ and $\{W_i\}_{i\geq 1}$ be decreasing sequences of open sets in $\mathbb{R}^{2n+1}$ such that $\bigcap_{i\geq 1}U_i=\gamma_1([0,1])$ and $\bigcap_{i\geq 1}W_i=v([0,1])$. We will inductively construct
\begin{itemize}
    \item decreasing sequences $\{\delta_i\}_{i\geq 1}$ and $\{\varepsilon_i\}_{i\geq 1}$ with $0<\varepsilon_i<\delta_i<\frac{1}{2^i}$,
    \item sequence of contact diffeomorphisms $\{\psi_i\}_{i\geq 1}$,
\end{itemize}
such that the following conditions hold
\begin{enumerate}[label={($\mathcal{I}$\arabic*)}]
    \item $\psi_i$ is supported inside $Op([0,1])\times B^{2n}(\delta_m)\subset\varphi_{i-1}(U_i)\cap W_i$, where we define $\varphi_i$ as $\varphi_i:=\psi_i\circ\psi_{i-1}\circ\cdots\circ\psi_1$,
    \item $\psi_i\circ\gamma_i=\gamma_{i+1}$, where $\gamma_{i}=(t,\varepsilon_it,1/\varepsilon_i,0,\ldots,0)$,
    \item $d_{C^0}(\psi_i,\mathrm{Id})<15\varepsilon_i$,
    \item $\Sigma_i([0,1]^2)\subset\varphi_{i-1}(U_i)\cap W_i$, where $\Sigma_i(s,t)=(t,\varepsilon_it,(1-s)/\varepsilon_i,0,\ldots,0)$.
\end{enumerate}

Suppose that we have constructed contact diffeomorphisms $\psi_1,\ldots,\psi_{m-1}$. We apply the Lemma \ref{stechingLem} for $\gamma_{m-1}$ and open set $\varphi_{m-1}(U_{m+1})$ to get contact diffeomorphism $\psi'_m$ supported in $Op(\Sigma_m([0,1]^2))\subset Op([0,1])\times B^{2n}(\delta_m)$ and $d_{C^0}(\psi'_m,\mathrm{Id})<3\varepsilon_m<3\delta_m$, such that $\Sigma_m([0,1]^2)\subset\psi'_m\circ\varphi_{m-1}(U_{m+1})$. Now we pick $\varepsilon_{m+1}<\delta_{m+1}<1/2^{m+1}$ small enough, such that $Op([0,1])\times B(\delta_{m+1})\subset\psi'_m\circ\varphi_{m-1}(U_{m+1})\cap W_{m+1}$. Moreover, we assume that $\delta_{m+1}$-neighbourhood of $\Sigma_m([0,1]^2)$ lays inside $\psi'_m\circ\varphi_{m-1}(U_{m+1})\cap Op([0,1])\times B(\delta_m)$.

\begin{figure}[h]
    \centering
    \includegraphics[width=\textwidth]{ 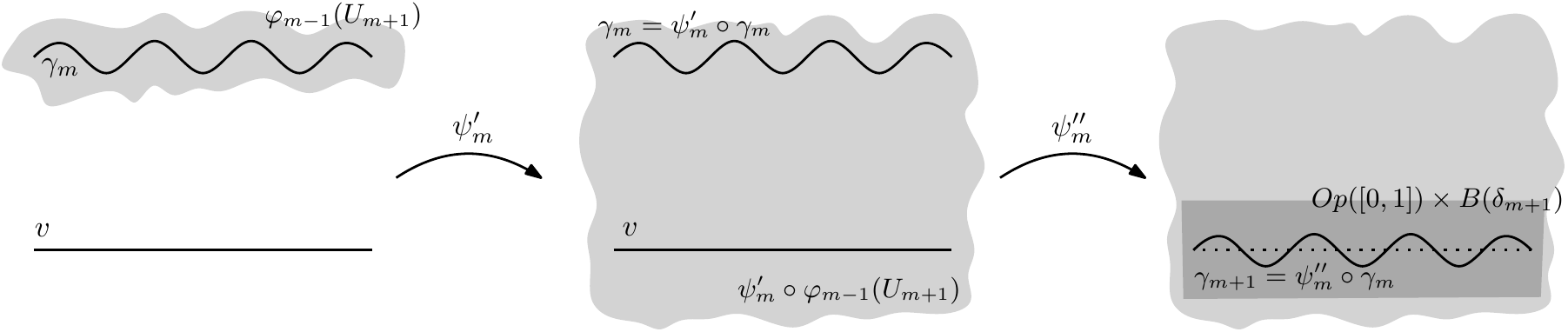}
    \label{Isotopy}
\end{figure}

\begin{prop}\label{InductionStep}
    Let $\gamma_{m+1}=(t,\varepsilon_{m+1}t,1/\varepsilon_{m+1},0,\ldots,0)$.  Then there exists Hamiltonian diffeomorphism $\psi''_m$ which satisfies
    \begin{enumerate}
        \item $\psi''_m\circ\gamma_m=\gamma_{m+1}$,
        \item support of $\psi''_m$ is a subset of $\psi'_m\circ\varphi_{m-1}(U_{m+1})\cap (Op([0,1])\times B(\delta_m))$,
        \item $d_{C^0}(\psi''_m,\mathrm{Id})<12\varepsilon_m$.
    \end{enumerate}
\end{prop}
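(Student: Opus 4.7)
My plan is to apply the quantitative $h$-principle for isotropic curves, namely Theorem~\ref{quant-h-princ-discs}(a) with $k=1$ (equivalently Proposition~\ref{h-principleExtended} in the one-dimensional case). That theorem transforms a small homotopy between two isotropic embedded arcs into a contact isotopy of comparable $C^0$-size supported in a small neighborhood of the homotopy, whose time-$1$ map sends one arc to the other. The desired diffeomorphism $\psi''_m$ will be obtained as such a time-$1$ map, once a suitable homotopy between $\gamma_m$ and $\gamma_{m+1}$ is exhibited inside the prescribed region.

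The first step is to produce a homotopy $F\colon[0,1]\times[0,1]\to V$ from $\gamma_m$ to $\gamma_{m+1}$ staying inside $\Omega:=\psi'_m\circ\varphi_{m-1}(U_{m+1})\cap\bigl(Op([0,1])\times B(\delta_m)\bigr)$. The natural candidate is the concatenation of two pieces: first follow the stretched surface $\Sigma_m(s,\cdot)$ from $\gamma_m$ down to $v$, then follow $\Sigma_{m+1}(1-s,\cdot)$ from $v$ up to $\gamma_{m+1}$. By construction the $\delta_{m+1}$-neighborhood of $\Sigma_m([0,1]^2)$ lies inside $\Omega$, so the first leg stays in $\Omega$; the second leg is contained in $Op([0,1])\times B(\delta_{m+1})\subset\Omega$, which is guaranteed by the choice of $\varepsilon_{m+1}$ small enough. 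Each intermediate curve in the concatenation is an isotropic embedding of $[0,1]$, which is exactly what the $h$-principle requires. An equivalent option is to use the explicit one-parameter family of isotropic curves $\gamma_{\varepsilon(s)}(t)=(t,\varepsilon(s)t,1/\varepsilon(s),0,\ldots,0)$ with $\varepsilon(s)$ interpolating between $\varepsilon_m$ and $\varepsilon_{m+1}$, which fits in $\Omega$ under the same conditions.

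The second step is to estimate the size of the homotopy. The diameter of each trajectory $s\mapsto F(s,t)$ is controlled by the diameter of $\Sigma_m\cup\Sigma_{m+1}$ transverse to $\gamma_m$, which by the parameter choices is on the order of $\varepsilon_m$. The factor of $3$ inherited from Lemma~\ref{stechingLem} for each of the two legs, combined with a cut-off, yields a bound of the desired form $12\varepsilon_m$.

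The third and final step is to apply Theorem~\ref{quant-h-princ-discs}(a) to $F$ inside the open contact submanifold $\Omega$: it returns a contact isotopy $(\Psi^t)_{t\in[0,1]}$ with $\Psi^1\circ\gamma_m=\gamma_{m+1}$, $\max_t d_{C^0}(\Psi^t,\mathrm{Id})<12\varepsilon_m$, and support contained in $\Omega$. Setting $\psi''_m:=\Psi^1$ completes the proof. The main obstacle throughout is the simultaneous control of the $C^0$-size and the support location: the bound $12\varepsilon_m$ is tight and cannot tolerate any slack, so one must leverage the deliberately chosen $\delta_{m+1}$-cushion around $\Sigma_m$ and the smallness of $\varepsilon_{m+1}$, while any residual transversality issues are handled by small perturbations in the spirit of Lemma~\ref{transvLemma}.
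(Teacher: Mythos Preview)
Your approach is genuinely different from the paper's and, more importantly, runs counter to the stated purpose of the section. The paper explicitly advertises (see the Introduction) that the open case in Section~\ref{sect4.2} is meant to be an \emph{explicit} construction that does \emph{not} rely on the quantitative $h$-principle; the paper's proof accordingly builds $\psi''_m$ by hand as a composition of contact diffeomorphisms extending carefully designed isotropic isotopies in the front projection (creating cusps via Claim~\ref{Cusp}, raising the slope, dilating~$\varphi$, and then undoing the cusps). Each isotopy is arranged to have support a disjoint union of sets of diameter $<\delta_m$, which is what forces the $C^0$-bound. Invoking Theorem~\ref{quant-h-princ-discs} here, while not circular, makes the section redundant.

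Beyond that contextual point, there is a real gap in your size estimate. The bound you need is $d_{C^0}(\psi''_m,\mathrm{Id})<12\varepsilon_m$, and the quantitative $h$-principle returns exactly the size of the input homotopy. But for your concatenated homotopy through $\Sigma_m$ and $\Sigma_{m+1}$, the trajectory of a fixed $t$ runs radially from $\gamma_m(t)$ to $v(t)$ and then out again to $\gamma_{m+1}(t)$: in the ambient Euclidean metric on $\mathbb{R}^{2n+1}$ this diameter is governed by $\sqrt{\rho}$ (since $x_1^2+y_1^2=\rho$), not by $\rho$ itself, so you obtain at best an $O(\sqrt{\varepsilon_m})$ bound rather than $O(\varepsilon_m)$. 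Your appeal to ``the factor of $3$ inherited from Lemma~\ref{stechingLem}'' is misplaced: that lemma bounds the $C^0$-size of a specific stretching diffeomorphism built from support decomposition, not the diameter of the homotopy tracks you are feeding into Theorem~\ref{quant-h-princ-discs}. The alternative family $\gamma_{\varepsilon(s)}$ is worse: its trajectories sweep through a $\rho$-interval of length $|1/\varepsilon_{m+1}-1/\varepsilon_m|$, which is large. Finally, the assertion that ``each intermediate curve in the concatenation is an isotropic embedding'' is false (for $s\in(0,1)$ the curves $\Sigma_m(s,\cdot)$ satisfy $dz-\rho\,d\varphi=s\,dt\neq 0$); this is harmless for applying Theorem~\ref{quant-h-princ-discs}, but it indicates the geometry has not been checked. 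The paper's explicit cusp construction is what actually delivers the precise $12\varepsilon_m$ bound, by breaking the large $\varphi$-motion into many small-support pieces.
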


Let $\psi_m=\psi''_m\circ\psi'_m$. It is straightforward to check that $\psi_m$ satisfies all 4 conditions of the induction hypothesis.\\

As a result of induction, we get a sequence of contact diffeomorphisms $\{\varphi_i\}_{i\geq 1}$ which satisfy properties that we need in order to apply the Lemma \ref{homeomorphismLemma} which tells us that the sequence $C^0$-converges to a homeomorphism $h$ which satisfies $h\circ\gamma_1=v$.\Qed

\subsubsection{Proof of the proposition \ref{InductionStep}}

We will get $\psi''_m$ as a composition of contact diffeomorphisms obtained by extending isotropic isotopies. In order to ensure that $\psi''_m$ stays $C^0$-close to the identity, we will only consider isotopies which have supports consisting of the disjoint union of sets with diameters less than $\delta_{m}$.\\

Let $V\subset\psi'_m\circ\varphi_{m-1}(U_{m+1})$ be the $\delta_{m+1}$-neighbourhood of the image $\Sigma_m([0,1]^2)$. To ensure that the support of $\gamma_s$ lays is inside $V$, we will only consider isotopies of the form $\gamma_s(t)=(z_s(t),\varphi_s(t),\rho_s(t),f_s(t),\ldots,f_s(t))$ where $f_s\in C^{\infty}(\mathbb{R})$ and the following conditions are satisfied
\begin{itemize}
    \item $|f_s|<\delta_{m+1}$, $\rho_s<\delta_m$, $z_s(t)\in(-\delta_{m+1},1+\delta_{m+1})$,
    \item $z_s(t)-\rho_s(t)\varphi_s(t)=0$ for all $t\in[0,1]$ meaning that the isotopy is isotropic,
    \item at least one of the following two conditions is satisfied:
    \begin{enumerate}[label={(\arabic*)}]
        \item $(z_s(t),\varphi_s(t))\in\pi_F\circ V=\bigcup_{t\in[0,1]}\{(z,\varphi)\mid |z-t|<\delta_{m+1},|\varphi-t/\varepsilon_m|<\delta_{m+1}\}$,
        \item $\rho_s(t)<\delta_{m+1}$ meaning that $\gamma_s(t)\in Op([0,1])\times B(\delta_{m+1})$.
    \end{enumerate}
\end{itemize}

In the following steps we assume that all isotopies satisfy above conditions, which implies that the support of $\psi''_m$ is contained inside $V$ and therefore property 2. in the proposition \ref{InductionStep} will be satisfied. As opposed to the 3-dimensional case, we can always locally perform "stabilization" in the front projection of the subcritical isotropic curve.

\begin{claim}\label{Cusp}
           Let $\gamma_0(t)=(z_0(t),\varphi_0(t),\rho_0(t),0,0,\ldots,0)$ be an isotropic embedding, and let $z_0'(t_0)\neq 0$. For each $\varepsilon>0$ there exists an isotropic isotopy $\Gamma:[0,1]^2\rightarrow\mathbb{R}^{2n+1}$ supported in $\varepsilon$-neighbourhood of $\gamma(t_0)$, such that $\Gamma(0,\cdot)=\gamma_0$ and $\Gamma(1,\cdot)=\gamma_1$, where $\gamma_1(t)=(z_1(t),\varphi_1(t),\rho_1(t),0,\ldots,0)$ with the front projection as on the image below.
\end{claim}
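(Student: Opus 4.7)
The plan is to invoke the parametric $C^0$-dense relative $h$-principle for subcritical isotropic discs (Theorem~\ref{h-principleDiscs}(b)) applied to a tiny interval around $t_0$, after explicitly constructing the target $\gamma_1$. Since $z_0'(t_0)\neq 0$, the isotropic identity $z_0'=\rho_0\varphi_0'$ together with $\rho_0(t_0)>0$ forces $\varphi_0'(t_0)\neq 0$, so in a Darboux chart we may reparametrize the interval of $\gamma_0$ around $t_0$ to the normal form $\gamma_0(u)=(u,u/c,c,0,\ldots,0)$ on $I=[-\delta,\delta]$ with $c=\rho_0(t_0)$ (after proving the claim in normal form, conjugation by the underlying local contact diffeomorphism returns us to the given coordinates).

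Next I would construct $\gamma_1$: set $\gamma_1=\gamma_0$ outside $I$, and on $I$ let the front $(z_1,\varphi_1)$ be an admissible arc with two cusps in the interior (shaped as in the figure), interpolating smoothly to the linear graph $(u,u/c)$ near $\partial I$. Using the standard local cusp model $\varphi_1(u)=u^3-3\sigma^2 u$ with a matching $z_1$, perturbed so that $\rho_1:=\lim z_1'/\varphi_1'$ extends smoothly across each cusp with $\rho_1'\neq 0$ there, makes the lift $\gamma_1=(z_1,\varphi_1,\rho_1,0,\ldots,0)$ an isotropic embedding, since its tangent at each cusp is purely in the $\rho$-direction and hence nonzero. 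Then I would verify the hypotheses of Theorem~\ref{h-principleDiscs}(b) with $A=\partial I$: an ambient homotopy $\rho_\tau$ between $\gamma_0$ and $\gamma_1$ rel $Op(A)$ is given by the straight-line interpolation in the Darboux chart, whose image lies in an arbitrarily small neighbourhood of $\gamma_0(I)$; and the differentials $d\gamma_0,d\gamma_1:I\to G^{\mathrm{iso}}(1,2n+1)$ agree on $Op(A)$ and take values in $G^{\mathrm{iso}}(1,2n+1)\cong\mathbb{R}^{2n}\setminus\{0\}$, which is simply connected whenever $n\geq 2$, so the loop obtained by concatenating $d\gamma_0$ with the reverse of $d\gamma_1$ is null-homotopic and produces the required relative homotopy $G$.

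The $h$-principle then supplies an isotropic isotopy $\Gamma$ with $\Gamma(0,\cdot)=\gamma_0$ and $\Gamma(1,\cdot)=\gamma_1$; choosing $\delta$ so small that $\gamma_0(I)$ lies in the $\varepsilon/2$-ball around $\gamma_0(t_0)$ and taking the approximation parameter in the $h$-principle to be less than $\varepsilon/2$ forces the image of $\Gamma$ (at every $(s,u)$ where $\Gamma(s,u)\neq\gamma_0(u)$) into the $\varepsilon$-neighbourhood of $\gamma_0(t_0)$. The main obstacle is the existence of the formal homotopy $G$, which is precisely the simple connectivity of $G^{\mathrm{iso}}(1,2n+1)$; this is exactly what distinguishes the subcritical regime from the $n=1$ case, where the rotation-number invariant of a Legendrian knot obstructs stabilization by a Legendrian isotopy. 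The construction of an admissible zigzag $\gamma_1$ whose $\rho_1$ extends smoothly with nonzero derivative at the cusps is a routine but slightly fiddly calculation handled by the explicit local cusp model.
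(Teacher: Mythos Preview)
Your approach is correct but genuinely different from the paper's. The paper never invokes the $h$-principle here: it writes down an explicit isotropic isotopy
\[
\Gamma(s,t)=\Bigl((1-s)z_0+sz_1,\;\varphi_0(t_0)+\int_{t_0}^t\tfrac{(1-s)z_0'+sz_1'}{(1-s)\rho_0+s\rho_1}\,d\tau,\;(1-s)\rho_0+s\rho_1,\;\sin(s\pi)f(t),\ldots,\sin(s\pi)f(t)\Bigr),
\]
where $f$ is a monotone bump supported on the modification interval. The extra-coordinate term $\sin(s\pi)f(t)$ is the geometric heart of the argument: it vanishes at $s=0,1$ (so the endpoints are the given $\gamma_0,\gamma_1$) but for $0<s<1$ its $t$-derivative is nonzero on $I$, which is exactly what keeps $\Gamma(s,\cdot)$ an embedding through the birth of the cusps. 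The paper then checks by hand that the $\varphi$-coordinate stays in the $\varepsilon$-ball. This fits the stated goal of Section~\ref{sect4.2}, which is to give an \emph{explicit} construction not relying on $h$-principle machinery; your route trades that explicitness for brevity.

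One small wrinkle in your write-up: the Darboux-chart reduction to $(u,u/c,c,0,\ldots,0)$ is both unnecessary and slightly dangerous. The Claim requires $\gamma_1$ to have the specific form $(z_1,\varphi_1,\rho_1,0,\ldots,0)$ in the \emph{given} $(z,\varphi,\rho,x_2,\ldots,y_n)$ coordinates, and conjugating back by a local contactomorphism need not preserve the vanishing of the last $2n-2$ coordinates. Simply drop this step: construct $\gamma_1$ directly in the original coordinates (your cusp model already lives there) and apply Theorem~\ref{h-principleDiscs}(b) to $\gamma_0|_I,\gamma_1|_I$ with $A=\partial I$. Everything else you wrote---the existence of the relative formal homotopy $G$ from $\pi_1(\mathbb{R}^{2n}\setminus\{0\})=0$ for $n\ge 2$, the straight-line $\rho_\tau$, and the $\varepsilon/2$ bookkeeping---goes through unchanged.
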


\begin{figure}[h]
    \centering
    \includegraphics[scale=0.8]{ 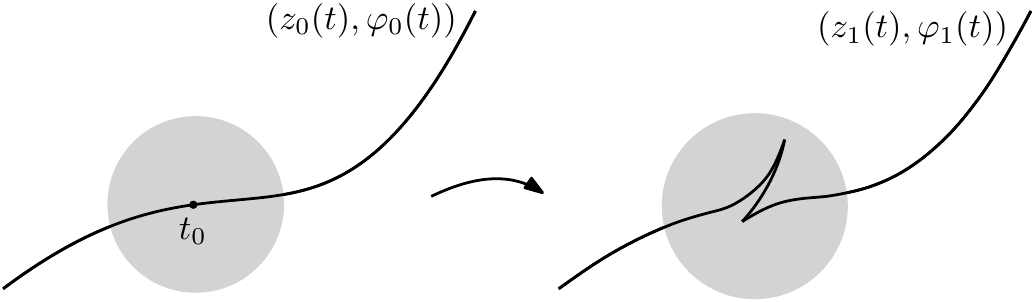}
    \caption{Curve on the right have two cusps and coincides with the curve on the left outside gray neighbourhood}
    \label{CreatingCusp}
\end{figure}

\begin{proof}
    Assume $z'(t_0)>0$ and let $I$ be a sufficiently small interval around $t_0$ such that the restriction $z'|_I>0$ and $\gamma(I)\subset B(\gamma(t_0),\varepsilon/2)$. We can choose $\gamma_1$ so that $\gamma_0(t)=\gamma_1(t)$ for $t\in[0,1]\setminus I$, $|I|<\frac{\varepsilon}{2\max_{t\in[0,1]}\{|\varphi_0|,|\varphi_1(t)|\}}$, and $\frac{1}{2}\leq\frac{\rho_0}{\rho_1}\leq 2$.
    Let $f:[0,1]\rightarrow\mathbb{R}$ be a smooth function which satisfies $|f|<\varepsilon$, $f'(t)>0$ for $t\in I$, and $f(t)=0$ if $\gamma(t)\not\in B(\gamma(t_0),\varepsilon)$.
    
    Define an isotopy of isotropic embeddings
    \[\Gamma(s,t)=((1-s)z_0(t)+sz_1(t),\varphi_0(t_0)+\int_{t_0}^t\frac{(1-s)z'_0(\tau)+sz'_1(\tau)}{(1-s)\rho_0(\tau)+s\rho_1(\tau)}d\tau,\]
    \[(1-s)\rho_0(t)+s\rho_1(t),\sin(s\pi)f(t),\ldots,\sin(s\pi)f(t)),\quad s\in[0,1].\]
    Note that $\Gamma(0,\cdot)=\gamma_0$, $\Gamma(1,\cdot)=\gamma_1$ and $\Gamma(s,t)=\gamma_1(t)$ for $(s,t)\in[0,1]\times([0,1]\setminus I)$. Let us verify that $\varphi$-coordinate stays in $\varepsilon$-neighbourhood of $\varphi_0(t_0)$. It is enough to check that for $t\in I$ we have
    \[|\varphi_s(t)-\varphi_0(t)|=\Big\lvert\int_{t_0}^t \frac{(1-s)z_0'(\tau)+sz_1'(\tau)}{(1-s)\rho_0(\tau)+s\rho_1(\tau)}d\tau\Big\rvert=|t-t_0|\cdot\Big\lvert\frac{(1-s)z_0'(\xi)+sz_1'(\xi)}{(1-s)\rho_0(\xi)+s\rho_1(\xi)}\Big\rvert=\]
    \[|t-t_0|\cdot\Big\lvert\frac{(1-s)\varphi_0(\xi)}{(1-s)+s\rho_1(\xi)/\rho_0(\xi)}+\frac{s\varphi_1(\xi)}{s+(1-s)\rho_0(\xi)/\rho_1(\xi)}\Big\rvert\leq 2|I|\cdot\max_{t\in[0,1]}\{|\varphi_0(t)|,|\varphi_1(t)|\}<\varepsilon.\]
\end{proof}

Let $\varepsilon>0$ be small enough and let $0=t_0<t_1<\ldots<t_{N}=1$ such that $t_{i+1}-t_i<\varepsilon$. Additionally, for each $0\leq i<N$ we pick real numbers $p_i,q_i$ such that $t_i<p_i<q_i<t_{i+1}$.\\

\textbf{Step I (creating cusps):} Using Claim \ref{Cusp} we create two cusps (at $p_i$ and $q_i$) inside each of the intervals $(t_i,t_{i+1})$. We end up with the curve $(z_1(t),\varphi_1(t),\rho_1(t),0,\ldots,0)$ such that $\frac{z_1(p_{i+1})-z_1(q_i)}{\varphi_1(p_{i+1})-\varphi_1(q_i)}<\delta_{m+1}$ (we can do this if $\varepsilon$ is small enough) and $\rho_1|_{\sqcup_{i=0}^{N-1}[p_i,q_i]}<\delta_{m+1}$. This isotopy is supported inside $\bigsqcup_{i=0}^{N-1}\pi_z^{-1}(Op([p_i,q_i]))\cap V$, and therefore we can extend it to a contact diffeomorphism $\Psi_1$ such that $d_{C^0}(\Psi_1,\mathrm{Id})<\delta_m$ (see figure \ref{StepI}).

\begin{figure}[h]
    \centering
    \includegraphics[width=\textwidth]{ 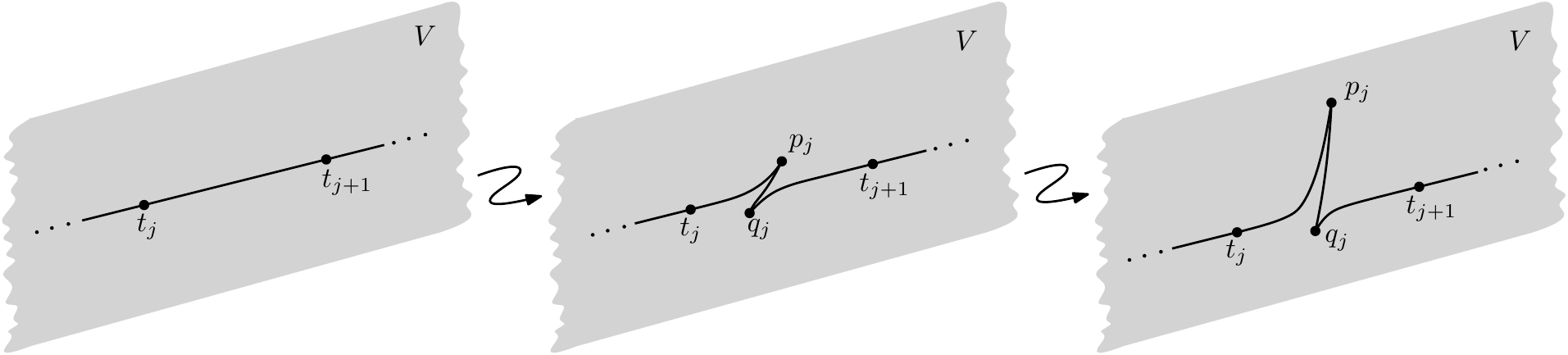}
    \caption{We create two cusps and then lift up the upper cusp, while keeping the $z$ coordinate fixed}
    \label{StepI}
\end{figure}

\textbf{Step II:} Let $g\in C^{\infty}(\mathbb{R})$ such that $g'>0$, $g(0)=0$ and $g(1)<\delta_{m+1}$. Denote $G(t)=(g(t),\ldots,g(t))\in\mathbb{R}^{2n-1}$. Let $\mu:[0,1]\rightarrow[0,1]$ such that $\mu(t)=1$ for $t\in[p_{2i},q_{2i}]$ and $\mu(t)=0$ for $t\in[p_{2i+1},q_{2i+1}]$. Define isotropic isotopy $(z_1(t),\varphi_1(t),\rho_1(t),s\lambda(t)G(t))$ which is supported inside $\bigsqcup_{i=0}^{N-1}\pi_z^{-1}((q_{2i-1},p_{2i+1}))\cap V$ and therefore extends to a contact diffeomorphism $\Psi'_2$ such that $d_{C^0}(\Psi'_2,\mathrm{Id})<\delta_m$. Similarly, isotopy $(z_1(t),\varphi_1(t),\rho_1(t),(s+\lambda(t)(1-s))G(t))$ extends to a contact diffeomorphism $\Psi''_2$ which satisfies $d_{C^0}(\Psi''_2,\mathrm{Id})<\delta_m$. We end up with the isotropic curve $(z_1(t),\varphi_1(t),\rho_1(t),G(t))$. 

\begin{figure}[h]
    \centering
    \includegraphics[scale=0.8]{ 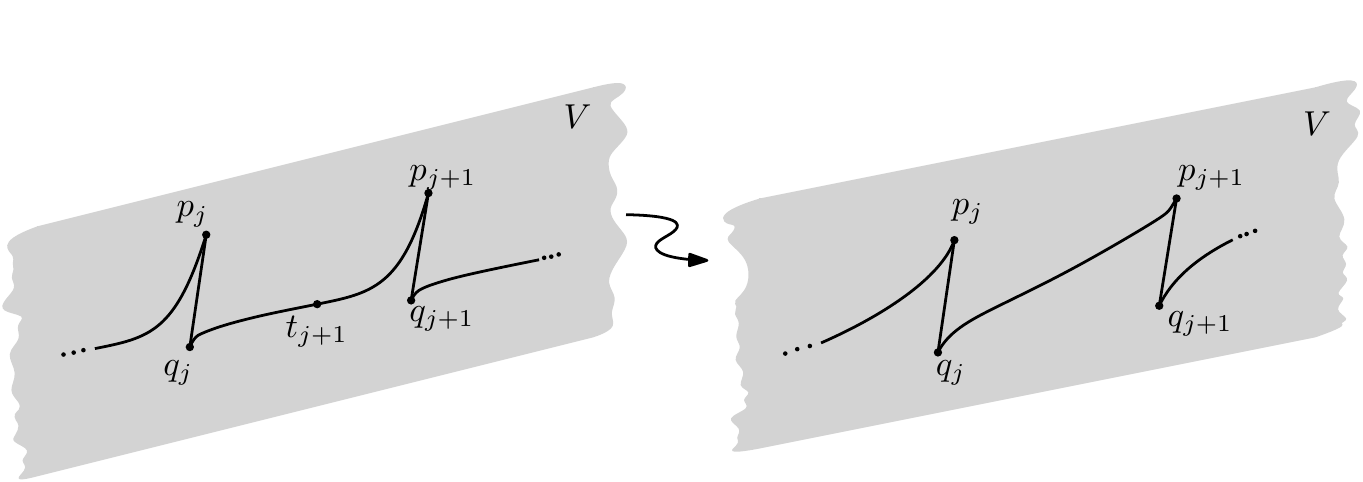}
    \caption{We increase the slope on interval $[q_j,p_{j+1}]$ so it becomes close to $\frac{z_1(p_{j+1})-z_1(q_j)}{\varphi_1(p_{j+1})-\varphi_1(q_j)}<\delta_{m+1}$}
    \label{StepIII}
\end{figure}

\textbf{Step III (increasing the slope):} We increase the slope in the front projection on each of the intervals $[q_i,p_{i+1}]$, so that we end up with the isotropic curve $\gamma_3(t)=(z_1(t),\varphi_2(t),\rho_2(t),G(t))$ which satisfies $\rho_2(t)=\frac{z_1'(t)}{\varphi_2'(t)}<\delta_{m+1}$ for all $t\in[0,1]$. This isotopy is supported inside $\bigsqcup_{i=0}^{N-1}\pi_z^{-1}(Op([q_i,p_{i+1}]))\cap V$ so it can be extended to a contact diffeomorphism $\Psi_3$ which satisfies $d_{C^0}(\Psi_3,\mathrm{Id})<\delta_{m+1}$ (see figure \ref{StepIII}).\\

\textbf{Step IV (dilatation of $\varphi$):} Let $\lambda:[0,1]\rightarrow[0,1]$ be a smooth function which satisfies
\begin{itemize}
    \item $\lambda(t)=1$ for $t\in Op(\{q_i\})$, and $\lambda(t)=\frac{\varepsilon_m}{\varepsilon_{m+1}}$ for $t\in Op(\{p_i\})$,
    \item $\lambda'(t)\leq 0$ for $t\in[p_i,q_i]$, and $\lambda'(t)\geq 0$ for $t\in[q_i,p_{i+1}]$.
\end{itemize}
Note that $\lambda'(t)\cdot\varphi'_2(t)\geq 0$, therefore we have
\[\widetilde{\rho_s}(t):=\frac{z_1'(t)}{((1-s+s\lambda(t))\cdot\varphi_2(t))'}\leq\frac{z'_1(t)'}{\rho'_2(t)(1-s+s\lambda(t))}\leq\rho_2(t)<\delta_{m+1}.\]
This means that the isotopy $(z_1(t),\rho_2(t)(1-s+s\lambda(t)),\widetilde{\rho_s},G(t))$ is supported inside $\bigsqcup_{i=0}^{N-1}(q_i,q_{i+1})\times B^{2n}(\delta_{m+1})$, thus it gives rise to a contact diffeomorphism $\Psi'_4$ which satisfies $d_{C^0}(\Psi'_4,\mathrm{Id})<\delta_m$. Similarly, we define $\hat{\rho}_s(t):=\frac{z_1'(t)}{(((1-s)\lambda(t)+s\varepsilon_m/\varepsilon_{m+1})\cdot\varphi_2(t))'}<\delta_{m+1}$ and consider isotopy of isotropic embeddings $(z_1(t),((1-s)\lambda(t)+s\varepsilon_m/\varepsilon_{m+1})\varphi_2(t),\hat{\rho}_s(t),G(t))$ which gives us contact diffeomorphism $\Psi''_4$ such that $d_{C^0}(\Psi''_4,\mathrm{Id})<\delta_{m}$. We end up with the isotropic curve (see figure \ref{StepsIVandV})
\[\gamma_{4}(t)=\Psi''_4\circ\Psi'_4\circ\gamma_3(t)=\left(z_1(t),\frac{\varepsilon_m}{\varepsilon_{m+1}}\varphi_2(t),\frac{\varepsilon_{m+1}}{\varepsilon_m}\rho_2(t),G(t)\right).\]

\begin{figure}[h]
    \centering
    \includegraphics[scale=0.5]{ 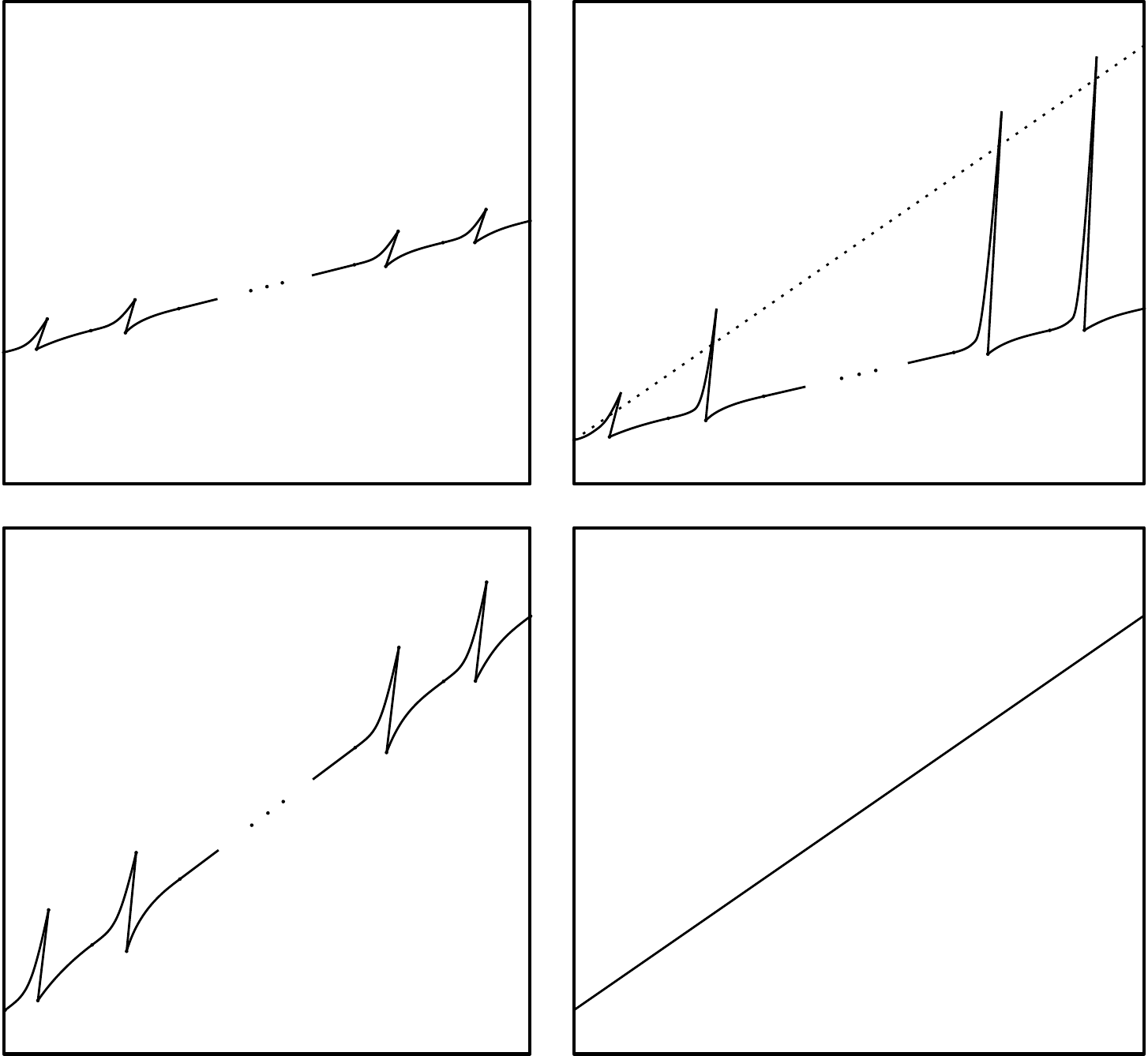}
    \caption{Up left is $\gamma_3$, down left is $\gamma_4=T\circ\gamma_3$ and down right is $\gamma_{k_{m+1}}$}
    \label{StepsIVandV}
\end{figure}

\textbf{Step V (removing cusps):} Define the transformation \[T(z,\varphi,\rho,x_2,y_2,\ldots,x_n,y_n)=\left(z,\frac{\varepsilon_m}{\varepsilon_{m+1}}\varphi,\frac{\varepsilon_{m+1}}{\varepsilon_m}\rho,x_2,y_2,\ldots,x_n,y_n\right).\]

Note that $T\circ\gamma_3=\gamma_4$ and $T\circ\gamma_{m}=\gamma_{m+1}$. We now repeat all 4 steps, but at every step we apply the transformation $T$ to each isotopy. Composition of these transformed isotopies maps $\gamma_{m+1}=T\circ\gamma_{m}$ to $\gamma_4=T\circ\gamma_3$. Let $\Phi$ be the composition of all contact diffeomorphisms obtained by extending transformed isotopies. Then $d_{C^0}(\Phi,\mathrm{Id})<6\delta_{m}$ and $\Phi\circ\gamma_{m+1}=\gamma_4$. Finally, we finish the proof by defining $\psi''_m=\Phi^{-1}\circ\Psi''_4\circ\Psi'_4\circ\Psi_3\circ\Psi''_2\circ\Psi'_2\circ\Psi_1$.\Qed

\subsection{Closed case}

We prove Theorem \ref{Theorem2} (b). More precisely, we pick any transverse knot in $(V,\xi)$ and find a contact homeomorphism, supported in tubular neighbourhood of a given transverse knot, which maps some isotropic knot to the transverse one.\\

By the contact neighborhood theorem, a sufficiently small neighbourhood of a transverse knot can be contactly embedded into an open neighbourhood of $S^1\times\{0\}^{2n}\subset S^1\times\mathbb{R}^{2n}$ with the contact form $\alpha=d\theta+x_1dy_1-y_1dx_1+\sum_{j=2}^nx_jdy_j$. Denote the transverse knot \[\eta:S^1=\mathbb{R}/(2\pi\mathbb{Z})\rightarrow S^1\times\mathbb{R}^{2n},\quad t\mapsto(\theta=t,0,0,\ldots,0).\]

Let $\gamma_m(t)=(t,\frac{1}{m}\sin(m^2t),\frac{1}{m}\cos(m^2t),0,\ldots,0)$ be a sequence of isotropic knots which $C^0$-converges to $\eta$.\\

Define surface $S_m:S^1\times[0,1]\rightarrow S^1\times\mathbb{R}^{2n}$ by
\[S_m(t,s)=(1-s)\gamma_{m}(t)+s\eta(t).\]
Note that $S_m$ is an embedding and moreover $S_m(t,0)=\gamma_m(t)$ and $S_m(t,1)=\eta(t)$. 

\begin{lemma}\label{StrechingClosedCase}
    Let $U\supset\gamma_m(S^1)$ be open neighbourhood. Then there exists $s_0\in(0,1)$ and a contact diffeomorphism $\Phi_m$ supported in $Op(S_m(S^1\times[0,1]))$ such that
    \begin{itemize}
        \item $\Phi_m\circ S_m(t,s_0)=\eta(t)$ for each $t\in S^1$,
        \item $\mathrm{diam}\,\Phi_m\circ S_m(\{t\}\times[0,s_0])<\frac{2}{m}$ for each $t\in S^1$,
        \item $d_{C^0}(\Phi_m,\mathrm{Id})<\frac{2}{m}$.
    \end{itemize}
\end{lemma}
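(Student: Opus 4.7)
The plan is to adapt the construction of Lemma \ref{stechingLem} from the open setting to the circle by building a contact Hamiltonian whose flow preserves $S_m$, fixes $\gamma_m$ pointwise, and sends $S_m(\cdot,s_0)$ onto $\eta$ for a suitable $s_0$. First, by the contact neighbourhood theorem for transverse knots I work inside the standard model with $\alpha=d\theta+x_1dy_1-y_1dx_1+\sum_{j=2}^n x_jdy_j$. The open-case Hamiltonian $(z-\varphi/c)\lambda(\cdot)$ does not transfer directly since $\theta$ is cyclic, so instead I would take
\[
H(\theta,x_1,y_1,x_2,\ldots)=y_1\sin(m^2\theta)-x_1\cos(m^2\theta),
\]
a smooth function on $S^1\times\mathbb{R}^{2n}$. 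In the polar coordinates $(x_1,y_1)=(\sqrt{\rho}\sin\varphi,\sqrt{\rho}\cos\varphi)$ this reads $H=\sqrt{\rho}\sin(m^2\theta-\varphi)$, and it vanishes identically on the twisted hypersurface $\{\varphi\equiv m^2\theta\}$, in particular on all of $S_m$.

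Next I would compute $X_H$ along $S_m$. Using $\alpha(X_H)=H$ and $\iota_{X_H}d\alpha=dH(R)\alpha-dH$ with the Reeb field $R=\partial_\theta$, a direct calculation gives
\[
X_H|_{S_m(t,s)}=-\frac{s(2-s)}{2}\bigl(\sin(m^2t)\,\partial_{x_1}+\cos(m^2t)\,\partial_{y_1}\bigr),
\]
so on $S_m$ the vector field is purely radial in the $(x_1,y_1)$-plane pointing at the origin, vanishes at $\gamma_m=S_m(\cdot,0)$, and preserves the $\theta$-coordinate. In terms of the radius $r=(1-s)/m$ the evolution along $S_m$ becomes $\dot s=\tfrac{m}{2}s(2-s)$, whose explicit solution is $s_\tau/(2-s_\tau)=\bigl(s/(2-s)\bigr)e^{m\tau}$. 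Choosing $\tau_1=\tfrac{1}{m}\ln\!\bigl(\tfrac{2-s_0}{s_0}\bigr)$, the flow of $H$ maps $S_m(\cdot,s_0)$ onto $S_m(\cdot,1)=\eta$.

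I would then multiply $H$ by a smooth cutoff $\chi$ supported in the prescribed tubular neighbourhood of $S_m(S^1\times[0,1])$ and equal to $1$ with vanishing first derivatives along $S_m$, and set $\Phi_m:=\phi^{\tau_1}_{H\chi}$. Since $H\equiv 0$ on $S_m$ we have $d(H\chi)=dH$ on $S_m$, so the flows of $H\chi$ and $H$ agree along $S_m$. Consequently $\Phi_m\circ S_m(\cdot,s_0)=\eta$ and the image $\Phi_m\circ S_m(\{t\}\times[0,s_0])=S_m(\{t\}\times[0,1])$ is a radial segment of Euclidean length $1/m<2/m$ in the $(x_1,y_1)$-disc, giving the diameter bound. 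The $C^0$-bound would then follow by estimating $|X_{H\chi}|$ pointwise and integrating over the short time $\tau_1=O(1/m)$.

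The main obstacle I foresee is securing the strict inequality $d_{C^0}(\Phi_m,\mathrm{Id})<2/m$ rather than merely $O(1/m)$. Although $|H|=O(1/m)$ on $\mathrm{supp}(\chi)$, the Hamiltonian oscillates in $\theta$ with frequency $m^2$, so $|dH|=O(m)$ and some components of $X_H$ end up of size $O(1)$ rather than $O(1/m)$; a naive integration over $\tau_1\sim(\ln 3)/m$ then produces a constant potentially exceeding $2$. I expect this to be handled by choosing $s_0$ close enough to $1$ (so that $\tau_1$ is arbitrarily small relative to $1/m$) and the tubular neighbourhood thin in the angular direction, so that the large angular-velocity components of $X_H$ only rotate trajectories along discs of radius $\ll 1/m$ in the $(x_1,y_1)$-plane and therefore contribute negligibly to the Cartesian displacement.
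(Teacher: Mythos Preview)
Your single-Hamiltonian construction on $S_m$ is correct: $H=y_1\sin(m^2\theta)-x_1\cos(m^2\theta)$ indeed vanishes on $S_m$, its contact vector field restricted to $S_m$ is radial in the $(x_1,y_1)$-plane, and after cutting off one obtains $\Phi_m\circ S_m(\cdot,s_0)=\eta$ with the correct diameter bound on $\Phi_m\circ S_m(\{t\}\times[0,s_0])$. The approach, however, differs substantially from the paper's, and the gap you yourself flag in the $C^0$-estimate is genuine and not repaired by the fix you propose.

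The difficulty is that $s_0$ is \emph{not} a free parameter. The role of $U$ in the lemma (look at how it is invoked in the induction step immediately afterwards) is to guarantee $\eta(S^1)\subset\Phi_m(U)$; since $\Phi_m\circ S_m(\cdot,s_0)=\eta$, this forces $S_m(\cdot,s_0)\subset U$, so $s_0$ is dictated by $U$ and must be taken close to $0$ when $U$ is small. Your flow time $\tau_1=\tfrac{1}{m}\ln\tfrac{2-s_0}{s_0}$ is then unbounded in $s_0$, and since the components of $X_{H\chi}$ off $S_m$ are of size $O(1)$ (coming from $\partial_\theta H=O(m)$ multiplied by coordinates of size $O(1/m)$), the accumulated displacement over time $\tau_1$ is not controlled by $2/m$. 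The support of $\chi$ is connected and wraps around the full $S^1$, so the ``trajectory stays in a small set'' argument is unavailable as well.

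The paper sidesteps all of this by \emph{fragmenting in $\theta$} rather than seeking a global Hamiltonian. It writes the transverse isotopy from $S_m(\cdot,s_0)$ to $\eta$ as a concatenation of two transverse isotopies, using a bump function $\lambda:S^1\to[0,1]$ whose support and the support of $1-\lambda$ each consist of short arcs. Each piece is then realised (via the transverse isotopy extension theorem) by a contact isotopy supported in $\mathrm{supp}(\lambda)\times B^{2n}(1/m)$, respectively $\mathrm{supp}(1-\lambda)\times B^{2n}(1/m)$. These supports are disjoint unions of sets of diameter close to $1/m$, so each factor has $C^0$-distance less than $1/m$ from the identity and the composition satisfies $d_{C^0}(\Phi_m,\mathrm{Id})<2/m$ directly, with no dependence on $s_0$ whatsoever. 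Your explicit Hamiltonian could in principle be salvaged by the same $\lambda/(1-\lambda)$ splitting (using $\lambda(\theta)H$ and then $(1-\lambda(\theta))H$), but then the two flows no longer compose to the desired map on $S_m$ without extra bookkeeping; the paper's isotopy-extension route is cleaner.
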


\begin{proof}
    Pick $s_0\in(0,1)$ so that $S_m(t,s_0)\in U$ for all $t$. Let $\lambda:S^1\rightarrow[0,1]$ be a smooth function such that the diameter of each connected component of supports of $\lambda$ and $1-\lambda$ is less than $\varepsilon>0$. Consider transverse isotopy 
    \[\Gamma'_s(t):=\left(t,\frac{s_0(1-s\lambda(t))}{m}\sin(m^2t),\frac{s_0(1-s\lambda(t))}{m}\cos(m^2t),0,\ldots,0\right),\quad s\in[0,1].\]
    Note that $\alpha(\frac{d}{dt}\Gamma'_s(t))=1-s_0^2(1-s\lambda(t))^2>0$. The isotopy $\Gamma'_s$ can be realised by a contact isotopy $\Phi'_s$ supported in arbitrarily small neighbourhood of $\mathrm{supp}(\lambda)\times B^{2n}(1/m)$. Note that $\Gamma'_0(t)=S_m(t,s_0)$. Consider another transverse isotopy
    \[\Gamma''_s(t):=\left(t,\frac{(1-s)s_0(1-\lambda(t))}{m}\sin(m^2t),\frac{(1-s)s_0(1-\lambda(t))}{m}\cos(m^2t),0,\ldots,0\right),\,s\in[0,1].\]
    Note that $\alpha(\frac{d}{dt}\Gamma''_s(t))=1-(1-s)^2s_0^2(1-\lambda(t))^2>0$. The isotopy $\Gamma''_s$ be realised by a contact isotopy $\Phi''_s$ supported in arbitrarily small neighbourhood of $\mathrm{supp}(1-\lambda)\times B^{2n}(1/m)$. Note that $
    \Gamma''_0(t)=\Gamma'_1(t)$ and $\Gamma''_1(t)=\eta(t)$. Finally, define $\Phi_m=\Phi''_1\circ\Phi'_1$, and if $\varepsilon$ is small enough we will have $d_{C^0}(\Phi_m,\mathrm{Id})<\frac{2}{m}$.
\end{proof}

Let $k_1\in\mathbb{N}$ and let $\{U_i\}_{i=1}^{\infty}$ and $\{W_i\}_{i=1}^{\infty}$ be decreasing sequences of open sets, such that $\bigcap_{i=1}^{\infty} U_i=\gamma_{k_1}(S^1)$ and $\bigcap_{i=1}^{\infty} W_i=\eta(S^1)$. Using induction, we will construct a sequence of contact diffeomorphisms $\{\psi_i\}_{i=1}^{\infty}$ and an increasing sequence of integers $\{k_i\}_{i=1}^{\infty}$ such that $k_i>2^i$ and
\begin{enumerate}[label=($\mathcal{I}$\arabic*)]
    \item $\psi_i$ is supported in $S^1\times B^{2n}(2^{-i})\subset\varphi_{i-1}(U_i)\cap W_i$, where $\varphi_i:=\psi_i\circ\psi_{i-1}\circ\cdots\circ\psi_1$,
    \item $\psi_i\circ\gamma_{k_i}=\gamma_{k_{i+1}}$,
    \item $d_{C^0}(\psi_i,\mathrm{Id})<\frac{4}{2^i}$.
\end{enumerate}

Suppose we have constructed $\psi_1,\ldots,\psi_{m-1}$, so we now proceed with the induction step. Let $U:=\varphi_{m-1}(U_{m+1})$. We apply Lemma \ref{StrechingClosedCase} for $\gamma_{k_m}=\varphi_{m-1}\circ\gamma_{k_1}$ and its open neighbourhood $U$, and as a result we get a contact diffeomorphism $\psi'_m$ which satisfies $\eta(S^1)\subset\psi'_m(U)$. Let $r>0$ be small enough so that $S^1\times B^{2n}(r)\subset\psi'_m(U)\cap W_m$. Let $k_{m+1}>\max\{1/r,2^{m+1}\}$, and $s_1\in(0,1)$ such that $\psi'_m\circ S_m(S^1\times\{s_1\})\subset S^1\times B^{2n}(r)$. Consider homotopy
\[\Gamma(t,s)=
    \begin{cases}
      \psi'_m\circ S_m(t,s),& \text{for }(t,s)\in S^1\times[0,s_0],\\
      \frac{1-s}{1-s_0}\cdot \psi'_m\circ S_m(t,s_0)+\frac{s-s_0}{1-s_0}\cdot\gamma_{k_{m+1}},& \text{otherwise.}
    \end{cases}
  \]
  Note that $\Gamma(t,0)=\psi'_m\circ\gamma_{k_m}, \Gamma(t,1)=\gamma_{k_{m+1}}$, and moreover size of homotopy $\Gamma$ is less than $\frac{2}{2^m}$ if $r$ is small enough. Therefore quantitative $h$-principle (Theorem \ref{Theorem1}) gives us contact diffeomorphism  $\psi''_m$ such that $d_{C^0}(\psi''_m,\mathrm{Id})<\frac{2}{2^m}$, and $\psi''_m\circ\psi'_m\circ\gamma_{k_m}=\gamma_{k_{m+1}}$. Finally, we can define $\psi_m:=\psi''_m\circ\psi'_m$, and we are done with the induction.\\
  
  As a result of induction, we get a sequence of contact diffeomorphisms $\{\varphi_i\}_{i\geq 1}$ which satisfy properties that we need in order to apply the Lemma \ref{homeomorphismLemma} which tells us that the sequence $C^0$-converges to a homeomorphism $h$ which satisfies $h\circ\gamma_{k_1}=\eta$.\Qed
  
\section{A $C^0$-counterexample to the Lagrangian Arnold conjecture}

The construction consists of two main parts, and we now show the sketch of the construction:

\textbf{Step I)} We start with the Hamiltonian function $H(q,p)=H(q)$ and consider the image $\phi^1_H(L_0)$. Note that $\phi^1_H(L_0)\cap L_0=\mathrm{Crit}(H)$, and we can ensure that this set is finite (we identify $L$ with the zero section $L_0$). Next, we find a connected tree $T\subset L$ whose vertices correspond to $\mathrm{Crit}(H)\subset L$. Finally, we construct (with the help of quantitative $h$-principle for curves) a sequence of Hamiltonian diffeomorphisms $\{\psi_i\}$ so that $\varphi_i:=\psi_i\circ\psi_{i-1}\circ\cdots\circ\psi_1\circ\phi^1_H$ $C^0$-converges to a Hamiltonian homeomorphism $\varphi$ which satisfies $\varphi(L_0)\cap L_0=T$. We should keep in mind that the purpose of $\psi_i$ is to make $\varphi_{i-1}(T)$ closer to $T$, while making sure that we do not create additional intersections with the zero section $L_0$.

\textbf{Step II)} In the second step we construct a sequence of Hamiltonian diffeomorphisms that preserve the zero section, and contract the tree $T$ to a single point.

\subsection{Building an invariant tree}

Let $H:L\rightarrow\mathbb{R}$ be a Morse function. Let $T_0$ be any finite (connected) tree whose vertices correspond to the critical points of $H$. To every edge $e_{p,q}$ of the tree $T_0$ ($p$ and $q$ are critical points of $H$ corresponding to the vertices of the edge $e_{p,q}$) we associate a continuous function $\alpha_{p,q}:[0,1]\rightarrow L$, such that $\alpha_{p,q}(0)=p,\,\alpha_{p,q}(1)=q$, $dH|_{\alpha_{p,q}((0,1))}\neq 0$ and $\alpha_{p,q}|_{(0,1)}$ is a smooth embedding. Using a general position argument, we can additionally assume that we have $\alpha_{p,q}((0,1))\cap\alpha_{p',q'}((0,1))=\emptyset$ whenever $e_{p,q}\neq e_{p',q'}$. Denote by $T\subset L$ the image of the tree $T_0$.

\begin{prop}\label{prop1}
    There exists a Hamiltonian homeomorphism $\psi\in\mathrm{Hameo}(T^*L)$ such that $\psi|_{T}=\mathrm{Id}$ and $\psi(L_0)\cap L_0=T$ (we identify $L$ with the zero section $L_0$).
\end{prop}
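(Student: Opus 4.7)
The plan is to start from $\phi_0 := \phi^1_{\widetilde{H}}$, where $\widetilde{H}$ is the pullback $H \circ \pi$ multiplied by a compactly supported fiber cutoff, and then iteratively straighten the arcs $\phi_0 \circ \alpha_{p,q}$ onto the model arcs $\alpha_{p,q} \subset L_0$ using the quantitative $h$-principle for curves (Theorem \ref{h-principleCurves}). Note that near $L_0$ one has $\phi_0(L_0) = \mathrm{graph}(dH)$, so $\phi_0(L_0) \cap L_0 = \mathrm{Crit}(H)$, which is exactly the vertex set of $T_0$. I will construct Hamiltonian diffeomorphisms $\psi_i$, each supported in a sufficiently small tubular neighborhood of the current image $\varphi_{i-1}(T)$ of the tree (where $\varphi_i := \psi_i \circ \cdots \circ \psi_1 \circ \phi_0$), so that $\varphi_i(\alpha_{p,q}(t)) \to \alpha_{p,q}(t)$ pointwise for each edge $e_{p,q}$. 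Lemma \ref{homeomorphismLemma}, combined with uniform convergence of the concatenated generating Hamiltonians, then produces a homeomorphism $\psi := \lim \varphi_i \in \mathrm{Hameo}(T^*L)$.

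The principal technical obstacle is the Liouville-action hypothesis in Theorem \ref{h-principleCurves}: over the exact cotangent bundle this becomes $\int \gamma_0^*\lambda = \int \gamma_1^*\lambda$, yet a direct computation gives $\int (\phi_0 \circ \alpha_{p,q})^*\lambda - \int \alpha_{p,q}^*\lambda = H(q) - H(p)$, which is typically nonzero. I therefore replace the target $\alpha_{p,q}$ at stage $i$ by an auxiliary curve $\widetilde{\alpha}^{(i)}_{p,q}$ which (a) coincides with $\alpha_{p,q}$ on a fixed neighborhood of each endpoint (so the vertices remain fixed throughout), (b) lies $C^0$-within $1/2^i$ of $\alpha_{p,q}$, and (c) has Liouville integral equal to that of $\varphi_{i-1} \circ \alpha_{p,q}$. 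Such a curve is produced by adding a small cotangent-direction bump distributed along $\alpha_{p,q}$ whose contribution to $\int p\,dq$ absorbs the required action discrepancy; spreading the bump over the full length of the arc keeps its amplitude $O(1/2^i)$, so that $\widetilde{\alpha}^{(i)}_{p,q} \to \alpha_{p,q}$ in $C^0$.

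At the $i$-th step, since the interiors $\alpha_{p,q}((0,1))$ are pairwise disjoint in $L$, I place the $\varphi_{i-1} \circ \alpha_{p,q}$ inside pairwise disjoint tubular neighborhoods, chosen to avoid $L_0 \setminus T$ and fixed small neighborhoods of the vertices. Theorem \ref{h-principleCurves}, applied edge by edge, then yields a Hamiltonian $F_i$ with $C^0$-size $<C/2^i$ and $\|F_i\|_\infty < 1/2^i$, realizing $\varphi_{i-1} \circ \alpha_{p,q} \leadsto \widetilde{\alpha}^{(i)}_{p,q}$ with support inside the chosen neighborhood; set $\psi_i := \phi^1_{F_i}$. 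The summability of the $C^0$-distances and the shrinking supports verify the hypotheses of Lemma \ref{homeomorphismLemma}, and the limit satisfies $\psi|_T = \mathrm{Id}$ (because $\varphi_i(\alpha_{p,q}(t)) = \widetilde{\alpha}^{(i)}_{p,q}(t) \to \alpha_{p,q}(t)$, with the vertices pinned at each stage) together with $\psi(L_0) \cap L_0 = T$ (points of $L_0 \setminus T$ are first sent off $L_0$ by $\phi_0$ and subsequently moved only within supports disjoint from the rest of $L_0$). The hardest part is the simultaneous control of $C^0$-size and Liouville action along the iteration, which is what forces the shrinking-bump scheme, together with the careful choice of supports near the vertices that prevents the creation of spurious zero-section intersections.
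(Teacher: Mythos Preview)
Your overall architecture (iterate the quantitative $h$-principle for curves, feed the result into Lemma~\ref{homeomorphismLemma}, concatenate the generating Hamiltonians to land in $\mathrm{Hameo}$) matches the paper's. But the ``shrinking-bump'' action adjustment, which you yourself flag as the hardest point, does not work as written.

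\textbf{The action discrepancy is a fixed constant, not a shrinking one.} By your own computation, $\int(\phi_0\circ\alpha)^*\lambda=H(q)-H(p)$. Since each $\psi_i$ is produced by Theorem~\ref{h-principleCurves}, it carries $\varphi_{i-1}\circ\alpha$ exactly onto $\widetilde\alpha^{(i)}$, whose action you have arranged to equal that of $\varphi_{i-1}\circ\alpha$; by induction the Liouville integral of $\varphi_i\circ\alpha$ is $H(q)-H(p)$ for \emph{every} $i$. Now a curve of the form $t\mapsto(\alpha(t),b(t))$ with $|b(t)|\le 1/2^i$ has $\big|\int p\,dq\big|\le \mathrm{length}(\alpha)\cdot 2^{-i}$, so for $i$ large no ``cotangent-direction bump of amplitude $O(1/2^i)$'' can realise the fixed value $H(q)-H(p)$. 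Your conditions (b) and (c) on $\widetilde\alpha^{(i)}$ are therefore mutually incompatible as stated. (One can rescue this by allowing $\widetilde\alpha^{(i)}$ to oscillate in the \emph{base} direction, so that it is $C^0$-close to $\alpha$ without being a section over $\alpha$; but then embeddedness, support control, and disjointness from $L_0\setminus T$ all require new arguments.)

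\textbf{The endpoint hypothesis of Theorem~\ref{h-principleCurves} fails at the first step.} You ask that $\widetilde\alpha^{(i)}$ coincide with $\alpha$ on a \emph{fixed} neighbourhood of each vertex and that the $h$-principle be applied in tubes avoiding fixed vertex neighbourhoods. For Theorem~\ref{h-principleCurves} to apply one then needs $\varphi_{i-1}\circ\alpha$ to agree with $\widetilde\alpha^{(i)}$, hence with $\alpha$, on that neighbourhood. But $\phi_0\circ\alpha(t)=(\alpha(t),dH_{\alpha(t)})$ agrees with $\alpha(t)$ only at the isolated points $t=0,1$, never on an interval. So already $i=1$ fails, and by induction no $\varphi_{i}\circ\alpha$ ever matches $\alpha$ on any fixed neighbourhood of the vertices.

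The paper resolves both issues simultaneously by a different bookkeeping device: instead of aiming at perturbed targets $\widetilde\alpha^{(i)}$, it carries along an embedded surface $\Sigma_i$ interpolating between $\alpha$ and $\gamma_i:=\varphi_{i-1}\circ\phi^1_H\circ\alpha$, with interior disjoint from $L_0$. At each stage there is a two-step move: a ``stretching'' isotopy $\psi'_m$ (built from a curve on $\Sigma_m$ with zero action, hence only a \emph{small} perturbation is needed) which refits the surface into the shrinking neighbourhoods, followed by $\psi''_m$ which moves $\gamma_m$ to a new curve $\gamma_{m+1}$ lying on the surface. The curves $\gamma_m$ and $\gamma_{m+1}$ agree near the endpoints on an interval $[0,\delta_m]\cup[1-\delta_m,1]$ with $\delta_m\to 0$ (this replaces your fixed neighbourhood and fixes the endpoint issue), and the action of $\gamma_{m+1}$ is matched to that of $\gamma_m$ by a local perturbation of the \emph{surface} (so $\gamma_{m+1}$ is not a fibrewise lift of $\alpha$ and the amplitude/action obstruction disappears). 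The second $h$-principle application is then performed in a set $W$ with $W\cap L_0=\emptyset$, which is what guarantees no new zero-section intersections are created.
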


\begin{proof}
Let $\alpha=\alpha_{p,q}:[0,1]\rightarrow L_0$ be one of the edges. Let $\{U_i\}_{i\geq 1}$ be a decreasing sequence of open sets with $\bigcap_{i\geq 1}U_i=\alpha([0,1])$. We define parametric surface $\Sigma_1:[0,1]^2\rightarrow T^*L$ by $\Sigma_1(t,s)=\phi^s_H(\alpha(t))$ and the curve $\gamma_1(t)=\phi_H^1(\alpha(t))$. We inductively construct:
\begin{itemize}
    \item decreasing sequence $\{\varepsilon_i\}_{i\geq 1}$, with $0<\varepsilon_i<\frac{1}{3^i}$,
    \item sequence $\{\psi_i\}_{i\geq 1}$ of compactly supported Hamiltonian diffeomorphisms in $T^*L$,
    \item sequence of parametric surfaces $\Sigma_i:[0,1]^2\rightarrow T^*L$, $i\geq 1$,
    \item decreasing sequence of open sets $\{W_i\}_{i\geq 1}$ such that $\bigcap_{i\geq 1} W_i=\alpha([0,1])$,
\end{itemize}
such that the following conditions hold:
\begin{enumerate}[label={($\mathcal{I}$\arabic*)}]
    \item $\psi_i$ is supported inside $\varphi_i(U_i)\cap W_i$, where $\varphi_i=\psi_i\circ\cdots\circ\psi_1$,
    \item $d_{C^0}(\psi_i,\mathrm{Id})<4\varepsilon_i$ and $\psi_i$ is generated by a Hamiltonian $H_i$ with $||H_i||_{\infty}<\varepsilon_i$,
    \item $\Sigma_i([0,1]^2)\subset\varphi_{i-1}(U_i)\cap W_i$,
    \item $\mathrm{diam}\,\Sigma_i(\{t\}\times[0,1])<\varepsilon_i$ for all $t\in[0,1]$,
    \item $\Sigma_i(t,0)=\alpha(t)$, and $\gamma_i(t):=\Sigma_i(t,1)=\varphi_{i-1}\circ\phi_H^1(\alpha(t))$,
    \item $\Sigma_i((0,1)^2)\cap L_0=\emptyset$, and $\varphi_i(\phi_H^1(L_0))\cap L_0=\phi_H^1(L_0)\cap L_0$.
\end{enumerate}

We need following lemma for the construction.

\begin{lemma}[streching the neighbourhood]
    There exists a Hamiltonian diffeomorphism $\psi'_m\in\mathrm{Ham}_c(T^*L)$ and a surface $\Sigma'_m:[0,1]^2\rightarrow T^*L$ such that
    \begin{enumerate}
        \item $\Sigma'_m([0,1]^2)\subset\psi'_m(\varphi_{m-1}(U_m)\cap W_m)$,
        \item $\Sigma'_m(t,i)=\Sigma_m(t,i)$ for all $(t,i)\in[0,1]\times\{0,1\}$, and $\Sigma'((0,1)^2)\cap L_0=\emptyset$,
        \item $\mathrm{diam}\,\Sigma'_m(\{t\}\times[0,1])<\varepsilon_m$ for all $t\in[0,1]$,
        \item $d_{C^0}(\psi'_m,\mathrm{Id})<\varepsilon_m$, and $\psi'_m$ is generated by arbitrary $C^0$-small Hamiltonian.
    \end{enumerate}
\end{lemma}

\begin{figure}[h]
    \centering
    \includegraphics[scale=0.6]{ 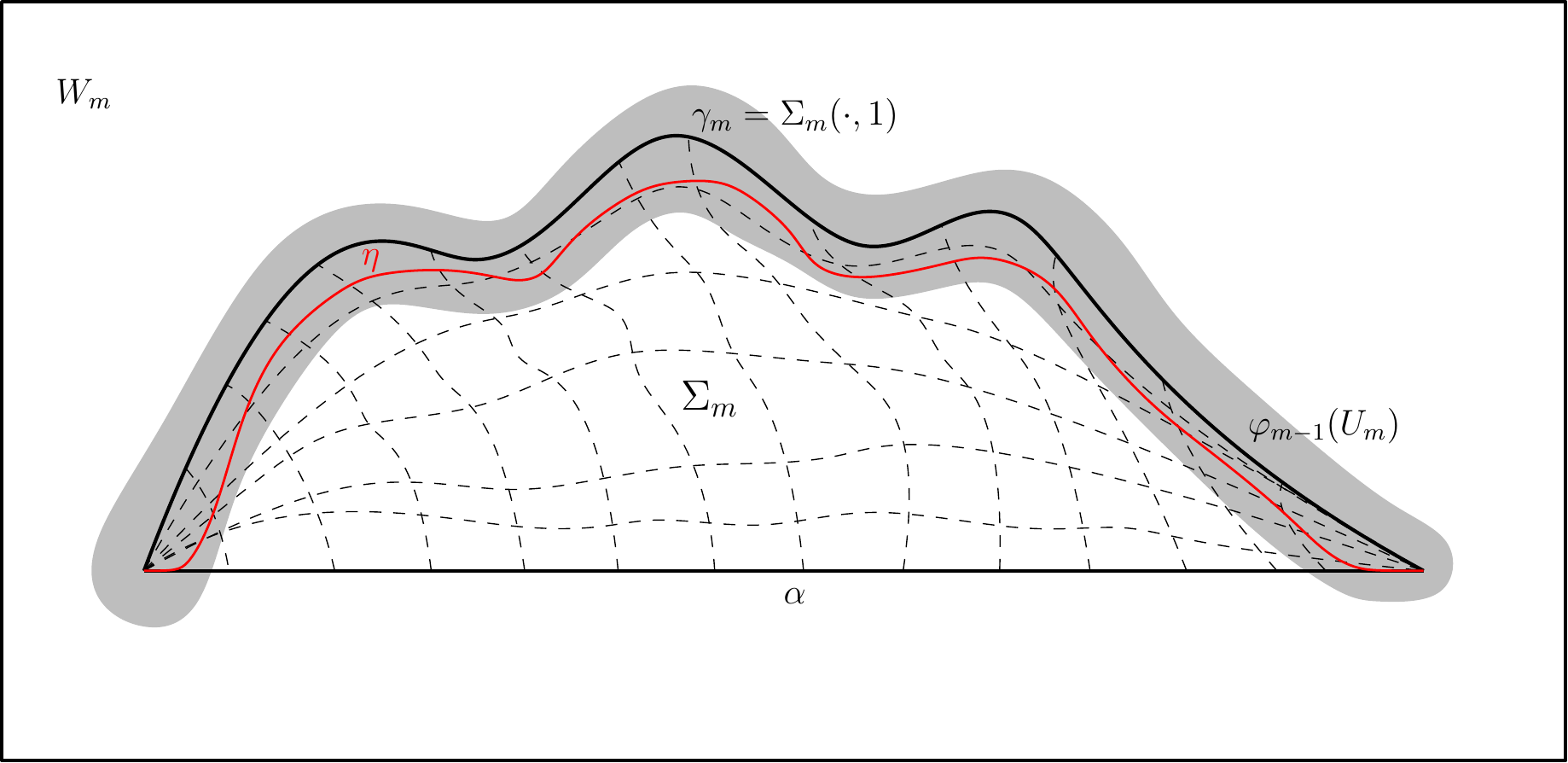}
    \caption{$\eta$ coincides with $\alpha$ near endpoints.}
    \label{CreatingCusp}
\end{figure}

\begin{proof}
    Define curve $\eta(t)=\Sigma_m(t,f(t))$, where $f:[0,1]\rightarrow[0,1)$ is a function such that $\eta([0,1])\subset\varphi_{m-1}(U_m)\cap W_m$ and $f(t)=0$ for $t\in[0,\delta]\cup[1-\delta,0]$, where $\delta>0$ is small enough. Perturb the surface $\Sigma_m$ in the neigbourhood of $\eta(1/2)$ to get the surface $\widetilde{\Sigma}_m$ such that $\int_0^1\widetilde{\eta}^*\lambda=0$, where $\widetilde{\eta}(t)=\widetilde{\Sigma}_m(t,f(t))$. We ensure that the perturbation is small enough so that $\mathrm{diam}(\widetilde{\Sigma}_m(\{t\}\times[0,1]))<\varepsilon_m$ for each $t\in[0,1]$. Note that
    \[(t,s)\mapsto\widetilde{\Sigma}_m(t,(1-s)f(t))\]
    is the isotopy between the curves $\widetilde{\eta}$ and $\alpha$ which have the same action and coincide near endpoints. Moreover, the diameter of the trajectory of any point is less than $\varepsilon_m$. Let $W=Op(\widetilde{\Sigma}_m([\delta/2,1-\delta/2]\times [0,\max f]))\subset T^*L$, such that $\gamma_m([0,1])\cap W=\emptyset$. We now apply quantitative $h$-principle for curves $\widetilde{\eta}|_{[\delta/2,1-\delta/2]}$ and $\alpha|_{[\delta/2,1-\delta/2]}$ and symplectic manifold $(W,-d\lambda)$. As a result we get the Hamiltonian diffeomorphism $\psi'_m$ supported in $W$, such that $d_{C^0}(\psi'_m,\mathrm{Id})<2\varepsilon_m$, and $\psi'_m\circ\widetilde{\eta}=\alpha$. Finally, define the surface
    \[\Sigma'_m(t,s)=\psi'_m\circ\widetilde{\Sigma}_m(t,1-s+sf(t)).\]
    
    Since $W\cap\gamma_m([0,1])=\emptyset$, we have $\Sigma'_m(t,1)=\gamma_m(t)=\Sigma_m(t,1)$. Moreover, $\psi'_m\circ\widetilde{\eta}=\alpha$, hence $\Sigma'_m(t,0)=\alpha(t)=\Sigma_m(t,0)$. We picked $\eta$ so that $\widetilde{\Sigma}_m([0,1]\times[f(t),1])\subset\varphi_{m-1}(U_m)\cap W_m$, which means that $\Sigma'_m([0,1]^2)=\psi'_m\circ\widetilde{\Sigma}_m([0,1]\times[f(t),1])\subset\psi'_m(\varphi_{m-1}(U_m)\cap W_m)$. On the other hand it can happen that $\Sigma'_m((0,1)^2)\cap L_0\neq\emptyset$. However $\mathrm{dim}(\Sigma'_m)+\mathrm{dim}(L_0)<\mathrm{dim}(T^*L)$, therefore we can make arbitrary small perturbation in the interior of the surface $\Sigma'_m$ to ensure transversality between $\Sigma'_m$ and $L_0$, which in this case means no intersections.
\end{proof}

We now describe the induction step. Let $\varepsilon_{m+1}<\min\{1/3^{m+1},\varepsilon_m\}$ and let $W_{m+1}=Op(\Sigma'_m((0,1)\times[0,\varepsilon_{m+1}]))$ such that $W_{m+1}\subset\psi'_m(\varphi_{m-1}(U_m)\cap W_m)$. Define  $\gamma'_{m+1}=\Sigma'_m(t,g(t))$, where $g:[0,1]\rightarrow(0,1]$ is a function which satisfies:
\begin{itemize}
    \item $\gamma'_{m+1}([0,1])\subset W_{m+1}$,
    \item $g(t)=1$ for $t\in[0,\delta]\cup[1-\delta,1]$,
    \item $\forall t\in[0,1]\,\mathrm{diam}\Sigma'_m(\{t\}\times[0,g(t)])<\varepsilon_{m+1}$.
\end{itemize}

First, perturb the surface $\Sigma'_m$ in the neighbourhood of point $\gamma'_{m+1}(1/2)$, to get new surface $\Sigma''_m$ which satisfies $\int_0^1\gamma_{m+1}^*\lambda=\int_0^1\gamma_m^*\lambda$, where $\gamma_{m+1}(t)=\Sigma''_m(t,g(t))$. Moreover, we require that the perturbation is small enough so that $\mathrm{diam}\Sigma''_m(\{t\}\times[0,1])<\varepsilon_m$ for each $t\in[0,1]$. Note that 
\[(t,s)\mapsto\Sigma''_m(t,1-s+sg(t))\]
is the isotopy between curves $\gamma_m$ and $\gamma_{m+1}$ which coincide near endpoints and have the same action. Moreover, the diameter of any point under this isotopy is less than $\varepsilon_m$. Let $W=Op(\Sigma''_m([\delta/2,1-\delta/2]\times[\min g,1]))\subset\psi'_m(\varphi_{m-1}(U_m)\cap W_m)$ be an open subset such that $W\cap L_0=\emptyset$. We now apply quantitative $h$-principle for the curves $\gamma_m|_{[\delta/2,1-\delta/2]}$ and $\gamma_{m+1}|_{[\delta/2,1-\delta/2]}$ and the symplectic manifold $(W,-d\lambda)$ to get a Hamiltonian diffeomorphism $\psi''_m$ supported in $W$, such that $d_{C^0}(\psi''_m,\mathrm{Id})<2\varepsilon_m$ and $\psi''_m\circ\gamma_{m}=\gamma_{m+1}$.\\

Finally, define $\psi_m=\psi''_{m+1}\circ\psi'_m$ and $\Sigma_{m+1}(t,s)=\Sigma''_m(t,sg(t))$. It is straightforward to check that $\psi_m$ and $\Sigma_{m+1}$ satisfy properties $(\mathcal{I}1)$-$(\mathcal{I}8)$, so we are done with the induction.\\

As a result of induction, sequence $\{\psi_i\}$ satisfies all required properties for the Lemma \ref{homeomorphismLemma}, which gives us the sequence $\{\varphi_i=\psi_i\circ\cdots\circ\psi_1\}_{i=1}^{\infty}$ which $C^0$-converges to a homeomorphism $\varphi_{\alpha}$ such that $\varphi_{\alpha}\circ\phi^1_H(\alpha(t))=\alpha(t)$.

Now let us prove that $\varphi_{\alpha}$ is the Hamiltonian homeomorphism. Note that $\psi_i=\phi^1_{H_i}$ for a Hamiltonian $H_i$ satisfying $||H_i||_{\infty}<\frac{\varepsilon_1}{3^{i-1}}$. Without loss of generality, we may assume that $H_i(t,x)$ vanishes for $t$ in the complement of an open subinterval of $[0,1]$. Let $\tau_0=0$ and $\tau_i=\sum_{k=1}^i\frac{1}{2^k}$. We define sequence of Hamiltonians $K_i$ defined as concatenations of time-reparametrizations of the $H_i$'s as follows:

\[K_i(t,x)=\left\{\begin{array}{lr}
    2^{k+1}H_k(2^{k+1}(t-\tau_k),x), & \text{for } k=0,1,\ldots,i,\,t\in[\tau_k,\tau_{k+1}]\\
    0, & \text{for } t\in[\tau_{i+1},1].
    \end{array}\right.\]
        
Each Hamiltonian $K_i$ generates the smooth isotopy $\varphi^t_i$ given by:
\[\varphi^t_i=\left\{\begin{array}{lr}
    \phi^{2^{k+1}(t-\tau_k)}_{H_k}\circ\psi_{k-1}\circ\cdots\circ\psi_1, & \forall k=0,1,\ldots,i,\,t\in[\tau_k,\tau_{k+1}]\\
    \psi_i\circ\cdots\circ\psi_1, & \forall t\in[\tau_{j+1},1].
    \end{array}\right.\]

Note that $\varphi^1_i=\varphi_i$, and define for $t\in[0,1)$ $\varphi^t_{\alpha}=\varphi^t_i$ for any $i\in\mathbb{N}$ such that $\tau_{i+1}\geq t$. We set $\varphi^1_{\alpha}=\varphi_{\alpha}$. It follows that $\varphi^t_{\alpha}\circ(\varphi^t_i)^{-1}=\mathrm{Id}$ for $i$ large enough and $t\in[0,1)$, which shows that $\varphi^t_i$ $C^0$-converges to $\varphi^t_{\alpha}$. The same argument shows that $(\varphi^t_{i})^{-1}$ converges to $(\varphi^t_{\alpha})^{-1}$, which implies that $\varphi_{\alpha}^t$ is a homeomorphism for all $t\in[0,1)$, but we already showed it is homeomorphism for $t=1$.\\

The condition $||H_i||_{\infty}\leq\frac{1}{3^i}$ implies that the sequence of Hamiltonians $K_i$ converges uniformly to the continuous function $K_{\alpha}(t,x)=2^{k+1}H_{k}(2^{k+1}(t-\tau_k),x)$ for $t\in[\tau_k,\tau_{k+1}]$, and $K_{\alpha}(1,x)=0$. This implies that $\varphi^t_{\alpha}$ is a hameotopy and therefore $\varphi_{\alpha}$ is a Hamiltonian homeomorphism.\\

To finish the proof we define $\psi=\phi^1_{\chi\cdot H}\circ\varphi_{\alpha_1}\circ\varphi_{\alpha_2}\circ\cdots\circ\varphi_{\alpha_m}$, where $\alpha_1,\ldots,\alpha_m$ are edges of a tree $\mathcal{T}$ and $\chi$ is a cut-off function which equals $1$ on $\bigcup_{t\in[0,1]}\phi^t_H(L_0)$.
\end{proof}

\subsection{Contracting tree to a point}

We now proceed to the second step, where we map an embedded tree to a single point, while keeping the zero section invariant.

\begin{lemma}\label{contractionLemma}
    For every $\varepsilon,\delta>0$ and every open neighbourhood $U\subset L$ of $T$, there exists a Hamiltonian function $H:[0,1]\times T^*L\rightarrow\mathbb{R}$ supported in $T^*_{\delta}U$ such that $\mathrm{diam}(\phi^1_H(T))\leq\varepsilon$, $||H||_{\infty}\leq\varepsilon$ and $\phi^1_H$ preserves the zero-section $L_0$.
\end{lemma}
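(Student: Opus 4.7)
The plan is to reduce the construction to that of a vector field on the base $L$ which contracts the tree, and then lift it via the cotangent lift to a Hamiltonian that preserves the zero-section.

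\textbf{Step 1 (contracting $T$ on the base).} Since $T\subset L$ is a contractible $1$-complex smoothly embedded in a manifold of dimension $\ge 2$, and $U\subset L$ is an open neighbourhood of $T$, I would first construct a compactly supported time-dependent vector field $X_t$ on $L$, supported in $U$, such that its time-$1$ flow $\phi^1$ satisfies $\mathrm{diam}(\phi^1(T))\le \varepsilon$. This is done by induction on the number of edges: an extremal (leaf) edge can be retracted into its interior vertex by a vector field supported in a small tubular neighbourhood of that edge, and after finitely many such retractions the whole tree has been moved into an arbitrarily small ball around a chosen vertex. Cutting off appropriately, we may assume $X_t$ is supported in a compact subset $K\subset U$ and set $M:=\max_{t,q}|X_t(q)|<\infty$ (with respect to some fixed Riemannian metric on $L$).

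\textbf{Step 2 (cotangent lift).} Choose $\delta'>0$ with $\delta'<\delta$ and $\delta' M<\varepsilon$. Pick a smooth cutoff function $\chi\colon T^*L\to [0,1]$ such that $\chi\equiv 1$ on $T^*_{\delta'/2}K$ and $\chi$ is compactly supported in $T^*_{\delta'}U$. Define
\[
H_t(q,p)\ :=\ \chi(q,p)\,\bigl\langle X_t(q),\,p\bigr\rangle.
\]
Then $H_t$ is supported in $T^*_{\delta'}U\subset T^*_\delta U$ and, since $|p|\le \delta'$ on its support,
\[
\|H\|_{\infty}\ \le\ \delta' M\ <\ \varepsilon.
\]

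\textbf{Step 3 (preservation of $L_0$ and the flow on $T$).} On a neighbourhood of $L_0\cap T^*K$ the function $\chi$ equals $1$, so there $H_t(q,p)=\langle X_t(q),p\rangle$ is linear in $p$. A direct computation gives
\[
X_{H_t}(q,0)\ =\ \tfrac{\partial H_t}{\partial p}(q,0)\,\tfrac{\partial}{\partial q}\ -\ \tfrac{\partial H_t}{\partial q}(q,0)\,\tfrac{\partial}{\partial p}\ =\ X_t(q),
\]
which is tangent to the zero-section. Hence the Hamiltonian flow $\phi^t_H$ preserves $L_0$, and its restriction to $L_0$ coincides with the flow of $X_t$ on $L$. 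In particular $\phi^1_H|_T=\phi^1|_T$, so $\mathrm{diam}(\phi^1_H(T))\le \varepsilon$, which completes the proof.

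The main obstacle is Step 1: proving that a smoothly embedded finite tree in a manifold of dimension $\ge 2$ can be contracted, within any prescribed open neighbourhood, to an arbitrarily small set by an ambient isotopy. This is geometrically clear by successive retraction of leaf edges, but one has to organize the retraction so that the supports of the successive vector fields stay inside $U$ and so that no vertex is ever pushed outside of $U$; induction on the number of edges, combined with tubular neighbourhood theorem along each edge, handles this cleanly.
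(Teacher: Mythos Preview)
Your proposal is correct and follows essentially the same approach as the paper: construct a compactly supported time-dependent vector field on the base $L$ that contracts $T$ inside $U$, lift it to the Hamiltonian $(q,p)\mapsto\langle X_t(q),p\rangle$, and cut off in the fibre direction; the resulting $H$ vanishes on $L_0$, so the flow preserves the zero-section and restricts there to the flow of $X_t$. The only minor differences are that the paper obtains the base contraction by first passing to a disc-like neighbourhood $V\supset T$ rather than retracting edge-by-edge, and that you handle the bound $\|H\|_\infty\le\varepsilon$ more explicitly (by shrinking the fibrewise cutoff radius $\delta'$ so that $\delta' M<\varepsilon$), a point the paper leaves implicit.
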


\begin{proof}
    Because $T$ is a contractible set and all edges are smoothly embedded, we can find a contractible open set $V$ such that $T\subset V\subset\overline{V}\subset U$. Let $B\subset V$ be a ball of radius $\leq\varepsilon$. Since $V$ is a disc-like neighbourhood of $T$, there exists a smooth (time-dependent) vector field $X$ supported in $U$ whose time-one map sends $V$ into $B$.\\

    The Hamiltonian function $(q,p)\mapsto\langle p,X(q)\rangle$ vanishes on $L_0$ and its flow is supported in $T^*U$. By multiplying it with an appropriate cutoff function which equals 1 on a neighborhood of the support of $X$ in $T^*L$, we obtain a Hamiltonian $H_1$ supported in $T^*_{\delta}U$. This Hamiltonian $H$ vanishes on $L_0$, thus its flow preserves it. Moreover, by construction, the restriction of its flow to the zero section coincides with the flow of $X$.
\end{proof}

Let $\{\delta_i\}_{i\geq 0}$ be a decreasing sequence of real numbers converging to $0$. Let $\{W_i\}_{i\geq 0}\subset L_0$ be a sequence of contractible open neighbourhoods of $T\subset L_0$, such that $\overline{W_{i+1}}\subset W_i$ and $\bigcap_{i\geq 0}W_i=T$. Such neighbourhoods exists since $T$ is contractible set consisting of union of smoothly embedded open intervals together with finite number of vertices. We will prove by an induction that there exists a decreasing sequence of contractible open sets $\{U_i\}_{i\geq 0}\subset L_0$, a sequence of Hamiltonian diffeomorphisms $(h_i=\phi^1_{H_i})$, a decreasing sequence $\{\varepsilon_i\}_{i\geq 0}$ of real numbers converging to $0$, and a subsequence $\{W_{k_i}\}_{i\geq 0}$ such that the following properties hold
\begin{enumerate}[label=($\mathcal{I}$\arabic*)]
    \item $\varphi_i(W_{k_i})=U_i$ and $\varphi_{i}(T^*_{\delta_{k_i}}W_{k_i})\subset T^*_{\varepsilon_i}U_i$, where $\varphi_i=h_i\circ\cdots\circ h_0$,
    \item $H_{i+1}$ is supported in $T^*_{\varepsilon_i}U_i$, and $||H_i||_{\infty}\leq 1/3^i$,
    \item $\mathrm{diam}(T^*_{\varepsilon_i}U_i)\leq 1/3^i$,
    \item $H_i|_{L_0}\equiv 0$, i.e. $h_i$ preserves the $0$-section.
\end{enumerate}

First set $U_0=W_0,\,H_0=0,\,k_0=0$, and then assume that we have constructed sequences up to the index $m$. According to the Lemma \ref{contractionLemma} applied to the tree $\varphi_m(T)$ we can find a Hamiltonian function $H_{m+1}$ supported in $T^*_{\varepsilon_m}U_m$, such that $||H_{m+1}||_{\infty}\leq 1/3^{m+1}$, and $h_{m+1}(\varphi_{m}(T))$ is included in a ball $B_m\subset T^*_{\varepsilon_m}U_m$, of diameter less than $1/3^{m+1}$. Let $k_{m+1}>k_m$ be sufficiently large so that \[h_{m+1}\circ\varphi_m(T^*_{\delta_{k_{m+1}}} W_{k_{m+1}})\subset B_m.\]

Finally, we can define $U_{m+1}=h_{m+1}\circ\varphi_m(W_{k_{m+1}})$ and $\varepsilon_{m+1}<\varepsilon_m$ so that $h_{m+1}\circ\varphi_m(T^*_{\varepsilon_{m+1}}W_{m+1})\subset B_m$. One can easily check that $\{U_i\}_{i=0}^{m+1},\{W_{k_i}\}_{i=0}^{m+1},\{H_i\}_{i=0}^{m+1}$ and $\{\varepsilon_i\}_{i=0}^{m+1}$ still have the required properties, thus by induction we obtain infinite sequences.\\

Since $\lim_{i\rightarrow\infty}\mathrm{diam}(\overline{T^*_{\varepsilon_i}U_i})=0$ and because the sequence $\{T^{*}_{\varepsilon_i}U_i\}_{i\geq 0}$ is decreasing, the intersection $\bigcap_{i\geq 0}\overline{T^*_{\varepsilon_i}U_i}$ is a single point that we will denote by $p\in L_0\subset T^*L$.\\

Consider now the sequence of maps $\varphi_i=h_i\circ\cdots h_1$. By construction, if $x\in T$ then $\varphi_i(x)\in U_i$, and therefore it converges to $p$. Moreover, for every neighbourhood $U\subset T^*L$ of $T\subset L_0\subset T^*L$, the restrictions $\varphi_i|_{M\setminus U}$ stabilize for $i$ large enough. Now we define a map $\varphi:T^*L\setminus T\rightarrow M\setminus\{p\}$ as $\varphi(x)=\varphi_{i}(x)$ for $i$ large enough. Note that $\varphi$ is a diffeomorphism which preserves the zero-section $L_0$. We now define
\[f(x)=
    \begin{cases}
      \varphi\circ\psi\circ\varphi^{-1}(x),& \text{for }x\neq p,\\
      p,& \text{for }x=p.  
    \end{cases}
  \]
Let $q\in L_0$ be a point different from $p$. Then $\varphi^{-1}(q)\in L_0\setminus T$, and so $\psi\circ\varphi^{-1}(q)\in\psi(L_0\setminus T)$. We know that $\psi(L_0)\cap L_0=T$, therefore $\psi\circ\varphi^{-1}(q)\notin L_0$, and because $\varphi$ preserves the zero-section $L_0$ we conclude that $f(q)\notin L_0$, meaning that we must have $f(L_0)\cap L_0=\{p\}$.\\

It only remains to prove that $f$ is a Hamiltonian homeomorphism. At this point our setting perfectly fits in the end of the proof of the Proposition 15. in \cite{BHS}, where we can use Claims 18. and 19. to show that $f\circ\psi^{-1}$ is a Hamiltonian homeomorphism. Now it follows that $f$ is a Hamiltonian homeomorphism, since it is a composition of Hamiltonian homeomorphisms $f\circ\psi^{-1}$ and $\psi$.\Qed

\section{Towards $C^0$ rigidity of Legendrian submanifolds}

We prove Theorem \ref{Theorem4}, based on the following result of Entov and Polterovich, which includes the concept of contact interlinking, which we now explain for the sake of completeness.\\

\textbf{Contact interlinking (\cite{EP21}).} Let $(Y,\xi=\mathrm{ker}\lambda)$ be a contact manifold. An ordered pair $(\Lambda_0,\Lambda_1)$ of disjoint Legendrian submanifolds $\Lambda_0,\Lambda_1\subset Y$ is called \textit{interlinked} if there exists a constant $\mu=\mu(\Lambda_0,\Lambda_1,\lambda)>0$ such that every bounded contact Hamiltonian $h$ on $Y$ with $h\geq c>0$ possesses an orbit of time-length $\leq\mu/c$ starting at $\Lambda_0$ and arriving at $\Lambda_1$. The pair $(\Lambda_0,\Lambda_1)$ is called \textit{robustly interlinked}, if it stays interlinked after a perturbation via $C^1$-small Legendrian isotopy.

Let $\Sigma$ be the jet space $J^1Q=T^*Q(p,q)\times\mathbb{R}(z)$ of a closed manifold $Q$ equipped with the contact form $\lambda_{\mathrm{std}}=dz-pdq$. Let $R=\partial/\partial z$ be the Reeb vector field of $\lambda$. Let $\Lambda_0=\{p=0,z=0\}$ be the zero section.

\begin{thm}[Theorem 1.5. in \cite{EP21}]\label{thmEP21}
    \begin{enumerate}[label=(\roman*)]
        \item Let $\psi$ be a positive function on $Q$, and let $\Lambda_1:=\{z=\psi(q),p=\psi'(q)\}$ be the graph of its 1-jet. Then the pair $(\Lambda_0,\Lambda_1)$ is robustly interlinked.
        \item Assume that $\Lambda_1\subset\Sigma=J^1Q$ is a Legendrian submanifold Legendrian isotopic to $\Lambda_0$, with the following property: there is a unique chord of the Reeb flow $R_t$ starting on $\Lambda_0$ and ending on $\Lambda_1$, and this chord is non-degenerate\footnote{A Reeb chord $R_tx$ with $x\in\Lambda_0$ and $y:=R_{\tau}x\in\Lambda_1$ is \textit{non-degenerate} if $D_xR_{\tau}(T_x\Lambda_0)\oplus T_y\Lambda_1=\xi_y$.}. Then the pair $(\Lambda_0,\Lambda_1)$ is interlinked.
    \end{enumerate}
\end{thm}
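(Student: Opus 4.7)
The plan is to establish interlinking via spectral invariants for the pair $(\Lambda_0,\Lambda_1)$ that are adapted to positive contact Hamiltonians. Concretely, one wants to attach to each bounded contact Hamiltonian $h$ a spectral number $c(h)\in\mathbb{R}_{\ge 0}$ with three properties: (a) $c(0)>0$, and the value $\mu:=c(0)$ will serve as the interlinking constant; (b) along the contact flow generated by $h\ge c>0$, the spectral number decays at rate at least $c$, i.e.\ $c(\phi_h^t)\le c(0)-ct$; and (c) $c(h)=0$ forces the existence of a Reeb chord from $\Lambda_0$ to $\phi_h^t(\Lambda_1)$ (equivalently, a flow trajectory of $h$ joining $\Lambda_0$ to $\Lambda_1$). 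Once such an invariant is available, the conclusion is immediate: a chord must appear by time $\mu/c$.

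For part (i), the construction should be made explicit using generating functions in the jet space, following Chaperon--Sikorav--Viterbo and their contact adaptations by Bhupal and Sandon. Since $\Lambda_1$ is the $1$-jet graph of $\psi>0$, it admits the tautological generating function $F_{\Lambda_1}(q)=\psi(q)$, while $\Lambda_0$ corresponds to the zero function. The Reeb chords from $\Lambda_0$ to $\Lambda_1$ are exactly in bijection with critical points of $\psi$, with chord length equal to the critical value. A natural choice is to take $\mu$ to be a min--max critical value of $\psi$ (for instance $\min\psi$). Robustness under a $C^1$-small Legendrian perturbation of $\Lambda_1$ then reduces to $C^0$-continuity of min--max values of the generating function, which survives perturbations that keep $\Lambda_1$ close to a graph.

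For part (ii), the graph hypothesis is dropped, so generating functions are not directly available and one should turn to Floer-theoretic invariants. The plan is to lift to the symplectization $Y\times\mathbb{R}_s$ and use a Rabinowitz-type action functional, or equivalently wrapped/Legendrian contact Floer cohomology $HF(\Lambda_0,\Lambda_1)$, whose generators are Reeb chords. The uniqueness and non-degeneracy of the chord imply that the Floer complex has a single generator, whose action $\mu>0$ is the candidate interlinking constant. The proportionality $\mu/c$ should then come from the standard reparametrization: a positive contact Hamiltonian $h$ on $Y$ lifts to a homogeneous symplectic Hamiltonian on the symplectization, and the rescaling by the conformal factor $e^s$ converts the pointwise bound $h\ge c$ into the desired linear decay of the spectral number.

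The main obstacle lies in verifying property (b), the rate-of-decay estimate under positive contact flow. Unlike the symplectic case, the contact form itself is rescaled by a contactomorphism, so there is no global Hofer-type action functional whose critical values give Reeb chord lengths directly. Bridging this gap is what both halves of the theorem truly rest on: in (i) one exploits the preservation of the front-projection picture under positive isotopies combined with classical generating-function continuation estimates, while in (ii) one translates the problem into the symplectization and controls the difference between the contact-Hamiltonian time parameter and the Floer-theoretic action, with the non-degeneracy hypothesis ensuring that the single critical point survives sufficiently small perturbations and yields a well-defined non-robust spectral number.
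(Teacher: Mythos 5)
This statement is not proved in the paper at all: it is Theorem~1.5 of Entov--Polterovich \cite{EP21}, quoted verbatim and used as a black box in the proof of Theorem \ref{Theorem4}. So there is no in-paper argument to measure your proposal against; it has to stand on its own as a proof of the Entov--Polterovich result, and as such it does not yet close. Your architecture (a spectral number $c$ attached to $(\Lambda_0,\Lambda_1)$, built from generating functions in the $1$-jet graph case and from a Floer-type theory of Reeb chords in general, with interlinking following from a decay estimate under positive contact Hamiltonians) is indeed the same general mechanism that Entov--Polterovich implement, in their language via persistence modules of filtered Legendrian (contact) homology and their stability properties. But you explicitly leave the pivotal step open: property (b), the statement that a contact Hamiltonian $h\geq c>0$ forces the spectral value to decrease at rate at least $c$, together with the companion statement (c) that hitting zero produces an actual chord. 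In \cite{EP21} this is precisely where the real work lies --- one needs a filtered invariant of the pair that is monotone under positive contact isotopies with a quantitative speed estimate, is invariant enough to be computed for the model pair (nonvanishing in a window of length $\mu$), and whose vanishing/nonvanishing transition detects a chord; none of this is routine, because (as you yourself note) a contactomorphism rescales the contact form, so there is no naive action functional and the conformal factor enters the estimate. A proof that says ``bridging this gap is what both halves of the theorem truly rest on'' has identified the theorem, not proved it.

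Two further points you would need to repair even at the level of the sketch. In part (i), the interlinking constant cannot simply be declared to be a min--max value of $\psi$; you must specify which homologically essential class survives for all competitors $h\geq c$, prove the window of nonvanishing has positive length $\mu$, and then derive robustness from a stability (continuity) statement for the filtered invariant under $C^1$-small Legendrian perturbations --- the reduction ``robustness $=$ $C^0$-continuity of min--max values'' presupposes the perturbed Legendrian still admits a global generating function quadratic at infinity, which requires justification. In part (ii), having ``a single generator'' in a chord complex does not by itself give a nonzero invariant with a quantitative barcode: you need the non-degenerate chord to define a class that persists over an interval of explicit length and to control how that interval moves under the positive flow generated by $h$, including the noncompactness of $J^1Q$ in the $p$ and $z$ directions when cutting off $h$. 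Until the monotonicity-with-speed estimate and the chord-detection statement are actually proved (or correctly quoted from the persistence-module machinery of \cite{EP21}), the proposal remains an outline of the known strategy rather than a proof.
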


We now prove stronger statement which implies Theorem \ref{Theorem4}.

\begin{prop}
    Let $(Y,\xi)$ be a contact manifold and $\Lambda\subset Y$ closed Legendrian submanifold. Assume we can find a sequence of contactomorphisms $\{\varphi_i\}_{i=1}^{\infty}$ that $C^0$-converges to a continuous map $\varphi:Y\rightarrow Y$ which satisfies $\varphi(Y\setminus\Lambda)\cap\varphi(\Lambda)=\emptyset$. Then $\varphi(\Lambda)$ cannot be nearly Reeb invariant.
\end{prop}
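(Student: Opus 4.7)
The plan is to suppose toward contradiction that $\varphi(\Lambda)$ is nearly Reeb invariant and derive a violation of Entov--Polterovich interlinking, Theorem~\ref{thmEP21}(i). First I would set up the local model. By Weinstein's Legendrian neighbourhood theorem, fix a contactomorphism from a neighbourhood $\mathcal{N}$ of the zero section in $(J^1\Lambda,\lambda_{\mathrm{std}})$ onto a neighbourhood $\mathcal{W}$ of $\Lambda$ in $Y$, sending the zero section $\Lambda_0$ to $\Lambda$. Pick a positive function $\psi\in C^\infty(\Lambda)$ small enough that its $1$-jet graph $\Lambda_1=\{(q,d\psi(q),\psi(q))\}$ lies in $\mathcal{N}$, and identify $\Lambda,\Lambda_1$ with their images in $\mathcal{W}\subset Y$; in particular $\Lambda\cap\Lambda_1=\emptyset$. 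By Theorem~\ref{thmEP21}(i) the pair $(\Lambda_0,\Lambda_1)$ is interlinked in $(J^1\Lambda,\lambda_{\mathrm{std}})$ with some constant $\mu>0$.

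Next I would exploit the assumption $\varphi(Y\setminus\Lambda)\cap\varphi(\Lambda)=\emptyset$, which forces $\varphi^{-1}(\varphi(\Lambda))=\Lambda$ and also $\varphi(\Lambda_1)\cap\varphi(\Lambda)=\emptyset$. Using compactness of $\Lambda$, choose an open neighbourhood $U$ of $\varphi(\Lambda)$ whose closure is disjoint from $\varphi(\Lambda_1)$ and satisfies $\varphi^{-1}(\bar U)\subset\mathcal{W}$. By the nearly Reeb invariance hypothesis one obtains an open $V$ with $\varphi(\Lambda)\subset V\subset U$ and a contact form $\lambda_V$ on $U$ (with $\ker\lambda_V=\xi|_U$) such that $V$ is invariant under the Reeb flow of $\lambda_V$. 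Because $\varphi_i\to\varphi$ uniformly, for all sufficiently large $i$ we get simultaneously $\Lambda\subset\varphi_i^{-1}(V)$, $\Lambda_1\cap\overline{\varphi_i^{-1}(V)}=\emptyset$, and $\overline{\varphi_i^{-1}(V)}\subset\varphi_i^{-1}(\bar U)\subset\mathcal{W}$. Fix such an $i$ and set $\alpha_i:=\varphi_i^*\lambda_V$; this is a contact form for $\xi$ on $\varphi_i^{-1}(U)$, and its Reeb vector field $R_i=(\varphi_i^{-1})_*R_{\lambda_V}$ preserves $\varphi_i^{-1}(V)$.

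The crucial step is to globalise $R_i$ into a positive bounded contact Hamiltonian on all of $(J^1\Lambda,\lambda_{\mathrm{std}})$. Under the identification $\mathcal{W}\equiv\mathcal{N}$, write $\alpha_i=f_i\,\lambda_{\mathrm{std}}$ on a neighbourhood of $\overline{\varphi_i^{-1}(V)}$ with $f_i>0$ smooth; then $R_i$ is generated as a $\lambda_{\mathrm{std}}$-contact vector field by $h_i:=\lambda_{\mathrm{std}}(R_i)=1/f_i$, which is bounded and bounded away from zero on the compact set $\overline{\varphi_i^{-1}(V)}$. Extend $h_i$, via a partition of unity, to a smooth function $H_i:J^1\Lambda\to\mathbb{R}$ that coincides with $h_i$ on an open neighbourhood of $\overline{\varphi_i^{-1}(V)}$ inside $\mathcal{N}$, is bounded above on $J^1\Lambda$, and satisfies $H_i\geq c>0$ everywhere. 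Since the $\lambda_{\mathrm{std}}$-contact vector field $X_{H_i}$ depends only on $H_i$ and its first derivatives, $X_{H_i}=R_i$ on $\varphi_i^{-1}(V)$. Now Theorem~\ref{thmEP21}(i) applied to $H_i$ produces an integral curve of $X_{H_i}$ starting on $\Lambda_0$ and arriving at $\Lambda_1$ in time at most $\mu/c$; but the orbit starts in $\varphi_i^{-1}(V)$, on which $X_{H_i}=R_i$, so by Reeb invariance it never leaves $\varphi_i^{-1}(V)$, contradicting $\Lambda_1\cap\varphi_i^{-1}(V)=\emptyset$. The main obstacle I anticipate is this transfer step: $\alpha_i$ and the conformal factor $f_i$ involve first derivatives of $\varphi_i$, which are not controlled by $C^0$-convergence, so the argument must be carried out for a single sufficiently large $i$, and one has to extend $h_i$ to a globally positive bounded Hamiltonian on $J^1\Lambda$ without disturbing the Reeb-invariant trap $\varphi_i^{-1}(V)$.
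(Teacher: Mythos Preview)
Your proof is correct and follows essentially the same line as the paper's: assume $\varphi(\Lambda)$ is nearly Reeb invariant, pull the Reeb-invariant neighbourhood back through a single $\varphi_{i}$ into the jet-bundle model via the Legendrian neighbourhood theorem, extend the resulting positive contact Hamiltonian $\lambda_{\mathrm{std}}(R_i)=1/f_i$ to a globally bounded Hamiltonian $\geq c>0$ on $J^1\Lambda$, and contradict Entov--Polterovich interlinking. The only cosmetic difference is that the paper first builds the bounded trap $\widetilde{\mathcal W}\subset J^1\Lambda$ and then chooses $\Lambda_1=\{z=C,\,p=0\}$ with $C$ large enough to lie outside it, whereas you fix a small $\Lambda_1=j^1\psi$ in advance and then arrange the trap $\varphi_i^{-1}(V)$ to miss it; both orderings work.
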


\begin{proof}
    For the sake of contradiction, assume that $\varphi(\Lambda)$ is nearly Reeb invariant. By the contact neighbourhood theorem, there exists an open neighbourhood $\mathcal{U}\supset\Lambda$ and a contact diffeomorphism $\psi:\mathcal{U}\rightarrow\psi(\mathcal{U})\subset(J^1\Lambda,\mathrm{ker}\,\lambda_{\mathrm{std}})$, such that $\Lambda$ is mapped to the zero section in $J^1\Lambda$.
    
    \begin{claim}
        There exists $m\in\mathbb{N}$ such that $\varphi(\Lambda)\subset\varphi_i(\mathcal{U})$ for $i\geq m$.
    \end{claim}
    \begin{proof}
        Assume contrary, that we can find increasing sequence $\{k_i\}_{i=0}^{\infty}$ such that $x_{k_i}\in\Lambda$ and $\varphi(x_{k_i})\notin\varphi_{k_i}(\mathcal{U})$. Let $y_{k_i}=\varphi_{k_i}^{-1}(\varphi(x_{k_i}))\in Y\setminus\mathcal{U}$. Since $Y\setminus\mathcal{U}$ and $\Lambda$ are both compact, we can find a subsequence $\{x_{k_i}\}_{i=1}^{\infty}$ (by abuse of notation we use the same notation for subsequence) such that $\lim_{i\rightarrow\infty}x_{k_i}=x\in L$ and $\lim_{i\rightarrow\infty}y_{k_i}=y\in Y\setminus\mathcal{U}$. Finally we get $\varphi(y)=\lim_{i\rightarrow\infty}\varphi_{k_i}(y_{k_i})=\lim_{i\rightarrow\infty}\varphi(x_{k_i})=\varphi(x)$, however $\varphi(y)\in\varphi(Y\setminus\Lambda)$ and $\varphi(x)\in\Lambda$ and we get a contradiction.
    \end{proof}

    Since $\varphi_i(\mathcal{U})$ converges to $\varphi(\mathcal{U})$ there exists an open set $\widetilde{\mathcal{U}}$ such that $\varphi(\Lambda)\subset\widetilde{\mathcal{U}}\subset\bigcap_{i=m}^{\infty}\varphi_i(\mathcal{U})$. Because $\varphi(\Lambda)$ is nearly Reeb invariant, we can find an open neighbourhood $\mathcal{W}\subset\widetilde{\mathcal{U}}$ and a contact form $\alpha\in\Omega^1(Op(\mathcal{W}))$ such that for each $t$ we have $\phi^t_{\alpha}(\mathcal{W})=\mathcal{W}$, where $\phi^t_{\alpha}$ is the Reeb flow of $\alpha$. Let $i_0\geq m$ be an index for which $\varphi_{i_0}(\Lambda)\subset\mathcal{W}$.
    
    Let $\widetilde{\mathcal{W}}:=\psi\circ\varphi_{i_0}^{-1}(\mathcal{W})\subset J^1\Lambda$ be an open neighbourhood of the zero section. Then the contact Hamiltonian flow with respect to $\lambda_{\mathrm{std}}$
    
    \[\Phi^t:=\psi\circ\varphi_{i_0}^{-1}\circ\phi^t_{\alpha}\circ\phi_{i_0}\circ\psi^{-1}:\widetilde{\mathcal{W}}\rightarrow\widetilde{\mathcal{W}}\]
    is generated by a contact Hamiltonian $h_t:\widetilde{\mathcal{W}}\rightarrow\mathbb{R}$ given by
    \[h_t=(\psi\circ\varphi_{i_0}^{-1})^*\lambda_{\mathrm{std}}(R_{\alpha}(\phi^t_{\alpha}\circ\varphi_{i_0}\circ\psi^{-1}))=f\alpha(R_{\alpha}(\phi^t_{\alpha}\circ\varphi_{i_0}\circ\psi^{-1}))=f\circ\phi^t_{\alpha}\circ\varphi_{i_0}\circ\psi^{-1},\]
    where $f:Op(\mathcal{W})\rightarrow(0,+\infty)$ is the function which satisfies $(\psi\circ\varphi_{i_0}^{-1})^*\lambda_{\mathrm{std}}=f\alpha$. Let $c=\min_{z\in\mathcal{W}}f(z)>0$. Finally, we extend $h_t$ to entire $J^1\Lambda$ so that it stays bounded and so that $h_t\geq c>0$. We get the contradiction with the Theorem \ref{thmEP21} (a), because the Hamiltonian flow of $h_t$ preserves bounded set $\widetilde{W}$, and hence it does not posses any orbit starting at $\Lambda_0$ and arriving to $\Lambda_1:=\{z=C,p=0\}$ for a constant $C$ large enough.
\end{proof}

It only remains to prove Proposition \ref{prop1.3}.

\begin{proof}[Proof of Proposition \ref{prop1.3}]
    Let $U\subset Y$ be an open neighbourhood of $K$. Theorem A in \cite{RS22} implies that there exists a transverse knot $T\subset (U,\xi)$ and a contact isotopy $\varphi^t:U\rightarrow U$ that squeezes $K$ onto $T$. In particular $\varphi^t(K)$ lays inside arbitrarily small neighbourhood of $T$ when $t>0$ is large enough. Since $T$ is nearly Reeb invariant, there exists an open neighbourhood $W\subset U$ of $T$ which is invariant under some contact form $\alpha$ generating $\xi$. Let $\tau>0$ be large enough so that $\varphi^{\tau}(K)\subset W$. Finally, we get that $(\varphi^{\tau})^{-1}(W)\subset U$ is an open neighbourhood of $K$ that is invariant under the Reeb flow of $((\varphi^{\tau})^{-1})^*\alpha$.
\end{proof}

\newpage

\appendix
\section{Appendix}
\addtocontents{toc}{\protect\setcounter{tocdepth}{0}}
\setcounter{thm}{0}
\renewcommand{\thethm}{\Alph{section}.\arabic{thm}}

\subsection{Contact structures on open manifolds}

A 1-form $\alpha$ on an odd dimensional manifold $V$ is called \textbf{contact form} if $\alpha\wedge(d\alpha)^n$ nowhere vanishes, where $\mathrm{dim}(V)=2n+1$. An \textbf{almost contact form} on a manifold $V$ is a pair $(\alpha,\omega)$ consisting of a non-vanishing 1-form $\alpha$ and a 2-form $\omega$ which is non-degenerate on $\xi=\mathrm{ker}\,\alpha$ at any point. An almost contact form $(\alpha,\omega)$ is a contact form iff $\omega=d\alpha$.\\

Any hyperplane field $\xi$ on a manifold $V$ has a \textbf{curvature}, namely the bilinear pairing
\[\xi\times\xi\rightarrow TV/\xi,\quad (X_p, Y_p)\rightarrow [X,Y]_p/\xi.\]

A \textbf{contact structure} is a hyperplane field $\xi$ with  non-degenerate curvature (then bilinear pairing defined above is a symplectic form valued in the line bundle $TV/\xi$). An \textbf{almost contact structure} is a hyperplane field $\xi$ given with an auxiliary non-degenerate pairing $\xi\times\xi\rightarrow TV/\xi$. We denote by $\mathcal{S}^+_{\mathrm{cont}}(V)$ the space of all cooriented almost contact structures on $V$, and by $\mathbb{S}^+_{\mathrm{cont}}(V)$ the space of all cooriented contact structures on $V$. 

\begin{thm}[\cite{EM02} Theorem 10.3.2]\label{thm10.3.2}
    For any open manifold $V$ the embedding
    \[\mathbb{S}^+_{\mathrm{cont}}(V)\hookrightarrow\mathcal{S}^+_{\mathrm{cont}}(V)\]
    is a homotopy equivalence. In particular, if two contact structures $\xi_0,\xi_1\in\mathbb{S}^+_{\mathrm{cont}}$ are homotopic in $\mathcal{S}^+_{\mathrm{cont}}$, then $\xi_0$ and $\xi_1$ are homotopic in $\mathbb{S}^+_{\mathrm{cont}}$.
\end{thm}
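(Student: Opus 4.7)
The plan is to follow Gromov's $h$-principle for open $\mathrm{Diff}$-invariant relations on open manifolds. First I would encode the contact condition as a first-order differential relation on cooriented hyperplane fields: a cooriented hyperplane field $\xi$ is contact precisely when its curvature pairing $\xi\times\xi\to TV/\xi,\ (X,Y)\mapsto[X,Y]\bmod\xi$ is non-degenerate. Thinking of $\xi$ locally as the kernel of a non-vanishing 1-form $\alpha$, this is a condition on the 1-jet of $\alpha$, so it defines a relation $\mathcal{R}_{\mathrm{cont}}\subset J^1$. By inspection, formal solutions of $\mathcal{R}_{\mathrm{cont}}$ correspond precisely to cooriented almost contact structures (one reads off a non-degenerate $\omega$ on $\ker\alpha$ from the formal $1$-jet), while holonomic solutions correspond precisely to cooriented contact structures. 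Openness of $\mathcal{R}_{\mathrm{cont}}$ is immediate since non-degeneracy of a bilinear form is an open condition, and $\mathrm{Diff}$-invariance follows from naturality of the Lie bracket.

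The heart of the argument is then Gromov's theorem that every open $\mathrm{Diff}$-invariant relation on an open manifold satisfies the full parametric $h$-principle. I would prove this via two standard ingredients. (i) The \emph{holonomic approximation theorem}: any formal section of a jet bundle can, after an arbitrarily $C^0$-small diffeotopy of the base, be $C^0$-approximated by a holonomic section on a neighborhood of a preassigned polyhedron of positive codimension. (ii) The fact that an open manifold $V$ admits a deformation retraction onto a subpolyhedron $K\subset V$ of positive codimension. Combining these produces a genuine contact structure on $\mathrm{Op}(K)$ extending the given formal one up to small perturbation; the deformation retraction of $V$ onto $K$, together with $\mathrm{Diff}$-invariance of $\mathcal{R}_{\mathrm{cont}}$, then lets one pull this contact structure back to a contact structure on all of $V$ still in the same formal homotopy class. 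Running the entire argument in families over a compact parameter space $D^k$ (and relative to $\partial D^k$) yields the parametric $h$-principle, which is exactly the statement that the inclusion $\mathbb{S}^+_{\mathrm{cont}}(V)\hookrightarrow\mathcal{S}^+_{\mathrm{cont}}(V)$ induces isomorphisms on all homotopy groups.

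To upgrade this weak homotopy equivalence to a genuine homotopy equivalence, I would invoke the standard fact that spaces of smooth sections of jet bundles with the $C^\infty$-topology have the homotopy type of CW complexes, so a weak equivalence between them is automatically a homotopy equivalence. The main obstacle in the whole program is the parametric version of holonomic approximation, which must be carried out by induction over the skeleta of $K$, with the diffeotopy of $V$ chosen uniformly in the parameter and respecting the boundary of the parameter cube; once that is in place, the reduction of the contact problem to an open $\mathrm{Diff}$-invariant relation is routine, and the pushforward of a contact structure from $\mathrm{Op}(K)$ to $V$ along the deformation retraction is essentially formal.
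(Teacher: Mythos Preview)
The paper does not actually prove this theorem; it is merely quoted from \cite{EM02} (Theorem 10.3.2) and used as a black box in the appendix. Your proposal is a correct outline of the standard Eliashberg--Mishachev proof via Gromov's $h$-principle for open $\mathrm{Diff}$-invariant differential relations on open manifolds (holonomic approximation near a positive-codimension core, then compression using $\mathrm{Diff}$-invariance), so there is nothing to compare against---you have reproduced essentially the argument the citation points to.
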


\begin{rem}
    Theorem also holds in a relative version, when one wants to extend a contact structure from a neighbourhood of a subcomplex of codimension $>1$.
\end{rem}

\subsection{Contact and isocontact embeddings}

Embedding $f:V\rightarrow(W,\xi)$ is called \textbf{contact} if it induces a contact structure on $V$. Note that $df(TV)\cap\xi$ consists of symplectic subspaces with respect to conformal symplectic class $\mathrm{CS}(\xi)$. Monomorphism (fiberwise injective homomorphism) $F:TV\rightarrow TW$ is called \textbf{contact} if $F^{-1}(\xi)$ is codimension 1 distribution on $V$ and $F(TV)\cap\xi$ consists of symplectic subspaces of $\xi$ with respect to $\mathrm{CS}(\xi)$.\\

If the manifold $V$ itself has a contact structure, then we may consider \textbf{isocontact} embeddings $f:(V,\xi_V)\rightarrow(W,\xi_W)$ which induce on $V$ the given structure $\xi_V$. If $\xi_V=\mathrm{ker}\,\alpha_V$ and $\xi_W=\mathrm{ker}\,\alpha_W$, then equivalently we can say that $f$ is isocontact if $f^*\alpha_W=\varphi\alpha_V$ for a non-vanishing function $\varphi:V\rightarrow\mathbb{R}$. Monomorphism $F:TV\rightarrow TW$ is called \textbf{isocontact} if $\xi_V=F^{-1}(\xi_W)$ and $F$ induces a conformally symplectic map $\xi_{V}\rightarrow\xi_{W}$ with respect to $\mathrm{CS}(\xi_V)$ and $\mathrm{CS}(\xi_W)$. It is important to notice that the contact condition is open, while the isocontact one is not.

\subsection{Approximate integration of tangential homotopies}

Let $\pi:\mathrm{Gr}_n W\rightarrow W$ be a Grassmanian bundle of $n$-planes tangent to $W$, and let $V$ be a $n$-dimensional manifold. Given a monomorphism $F: TV\rightarrow TW$ we will denote by $GF$ the corresponding tangential (Gauss) map $GF:V\rightarrow\mathrm{Gr}_nW$. Assume that $f_t:V\rightarrow W,\,t\in[0,1]$ is an isotopy of embeddings. We also assume that the manifolds $W$ and $\mathrm{Gr}_nW$ are endowed with Riemannian metrics. A \textbf{tangential homotopy} is a pair $(f,G_s)$ where $f:V\rightarrow W$ is an embedding, and $G_s:V\rightarrow\mathrm{Gr}_n W,\,s\in[0,1]$ is a homotopy which satisfies $G_0=df$ and $\pi\circ G_s=f$.

\begin{thm}[\cite{EM02} Theorem 4.4.1.]\label{thm4.4.1}
    Let $K\subset V$ be a polyhedron of positive codimension and $(f_t,G_{t,s})$ an isotopy of tangential homotopies. Then for any $\varepsilon,\delta>0$ there exists an isotopy of $\delta$-small diffeotopies $h^{\tau}_t:V\rightarrow V,\,\tau\in[0,1]$, and an isotopy of embeddings $rel\,\{0,1\}$
    \[\widetilde{f}_{t,s}:Op(h^1_t(K))\rightarrow W,\quad\widetilde{f}_{t,0}=f_t\]
    such that the homotopy $Gd\widetilde{f}_{s,t}$ is $\varepsilon$-close to the $G_{t,s}|_{Op(h^1_s(K))}$.  
\end{thm}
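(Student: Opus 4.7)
The plan is to derive Theorem \ref{thm4.4.1} from the \emph{holonomic approximation theorem} of Eliashberg--Mishachev (\cite{EM02}, Chapter 3). A Gauss map $G_{t,s}: V \to \mathrm{Gr}_n W$ covering $f_t$ is a formal section of the pullback Grassmannian bundle over $f_t(V)$, and the condition that it be of the form $Gd\widetilde f_{t,s}$ for an actual embedding $\widetilde f_{t,s}$ is precisely the holonomicity condition for this formal section. Since $K\subset V$ has positive codimension, HAT applies directly and produces, after a $C^0$-small diffeotopy $h_t^{\tau}$ moving $K$, a holonomic section over $\mathrm{Op}(h_t^1(K))$ whose underlying tangential map is $\varepsilon$-close to $G_{t,s}$. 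The isotopy parameter $t$ and the homotopy parameter $s$ are handled by running the construction parametrically.

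The technical heart of HAT is a \emph{wiggling} construction carried out by induction on the skeleta of $K$. On the $0$-skeleton the statement is essentially trivial, since a tangential map at a point imposes no integrability constraint. For the inductive step, fix a $k$-cell $\sigma$ disjoint from $K^{(k-1)}$ and choose coordinates $(y_1,\ldots,y_n)$ in a tubular neighborhood of $\sigma$ so that $\sigma\subset\{y_{k+1}=\cdots=y_n=0\}$. The diffeotopy $h_t^{\tau}$ is chosen to replace $\sigma$ by a graph of the form $y_{k+j} = (\mu/N)\,\psi_j(Ny_1,\ldots,Ny_k;\, y)$, where the profiles $\psi_j$ are high-frequency, low-amplitude functions determined by the components of $G_{t,s}$ in the normal directions $\partial_{y_{k+1}},\ldots,\partial_{y_n}$. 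The amplitude $\mu/N$ is taken smaller than $\delta$ (so $h_t^{\tau}$ is $\delta$-small in $C^0$), while the frequency $N$ is taken large enough that the tangent planes of the wiggled submanifold sweep through values that are $\varepsilon$-close to $G_{t,s}$ at every point of $h_t^1(\sigma)$.

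The parametric aspect (smoothness in $t,s$ and the rel $\{0,1\}$ condition) is handled by performing the wiggling continuously in all parameters and by multiplying the profiles $\psi_j$ by a cutoff in $s$ that vanishes near $s\in\{0,1\}$, so that $\widetilde f_{t,0}=f_t$ holds identically and the endpoint constraint $\widetilde f_{t,1}$ matches the prescribed tangential data at $s=1$. The main obstacle will be coordinating the wiggling between cells of different dimensions sharing common faces: the wiggling performed on a $k$-cell must not undo the approximation already achieved on $K^{(k-1)}$. One deals with this by supporting the wiggling of each $\sigma$ in a tubular neighborhood strictly disjoint from a thinner tubular neighborhood of $K^{(k-1)}$ where the inductive construction lives, and interpolating in the normal directions via a cutoff. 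The positive codimension hypothesis is exactly what supplies the transverse room needed both to wiggle and to separate these nested tubular neighborhoods cleanly; maintaining uniform $\varepsilon$-closeness of the Gauss map across the interpolation is the most delicate bookkeeping step of the argument.
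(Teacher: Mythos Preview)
The paper does not give its own proof of Theorem \ref{thm4.4.1}; it simply quotes the statement from \cite{EM02} and then uses it as a black box in the proofs of Theorems \ref{thm4.5.1} and \ref{thm4.5.2}. So there is no proof in the paper to compare against.

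That said, your outline is the right one: in \cite{EM02} this theorem is indeed a direct consequence of the Holonomic Approximation Theorem, and the inductive wiggling-over-skeleta picture you sketch is exactly how HAT is proved there. Identifying the Gauss lift $G_{t,s}$ as a formal section of a jet-type bundle and recognising ``$G=Gd\widetilde f$'' as the holonomicity condition is precisely the reduction used.

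One point is garbled, however. You write that the cutoff in $s$ should vanish near $s\in\{0,1\}$ so that ``$\widetilde f_{t,0}=f_t$ holds identically and the endpoint constraint $\widetilde f_{t,1}$ matches the prescribed tangential data at $s=1$.'' If the wiggling is switched off near $s=1$ then $\widetilde f_{t,1}=f_t$ and hence $Gd\widetilde f_{t,1}=Gdf_t=G_{t,0}$, which is \emph{not} $\varepsilon$-close to $G_{t,1}$ in general; that would defeat the whole purpose. The ``rel $\{0,1\}$'' in the statement refers to the $t$-parameter (look at how the theorem is applied in the proof of Theorem \ref{thm4.5.1}: one needs $\widetilde f_{0,s}$ and $\widetilde f_{1,s}$ to stay equal to $f_0$ and $f_1$, which are already holonomic, so that $h_t^\tau\equiv\mathrm{Id}$ for $t$ near $0,1$). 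The correct move is therefore to cut off the wiggling in $t$ near $t\in\{0,1\}$, not in $s$; at $s=0$ the condition $\widetilde f_{t,0}=f_t$ is automatic because $G_{t,0}=Gdf_t$ is already holonomic and no wiggling is needed there.
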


\begin{rem}
    The relative version of the theorem is also true.
\end{rem}

\subsection{Directed embeddings of open manifolds}

Let $A\subset\mathrm{Gr}_nW$ be an arbitrary subset. An embedding $f:V\rightarrow\mathrm{Gr}_nW$ is called $A$\textbf{-directed} if $Gdf(V)\subset A$. A \textbf{formal $A$-directed embedding} is a pair $(f,F_s)$ where $f:V\rightarrow W$ is an embedding, and $F_s: TV\rightarrow TW$ is a homotopy of monomorphisms covering $f$ such that $F_0=df$ and $GF_1(V)\subset A$. Given an open manifold $V$ a polyhedron $K\subset V$ is called a \textit{core} of $V$ if for an arbitrarily small neighborhood $U$ of $K$ there exists a fixed on $K$ isotopy $\varphi_t : V\rightarrow V$ which brings $V$ to $U$.

\begin{thm}[\cite{EM02} Theorem 4.5.1.]\label{thm4.5.1}
    Let $A\subset\mathrm{Gr}_n W$ be an open subset. Let $(f_t,G_{t,s})$ be a formal $A$-directed isotopy of tangential homotopies, such that $f_0$ and $f_1$ are $A$-directed. Then, there exists an isotopy of embeddings $f_{t,s}:V\rightarrow W$ $rel\,\{0,1\}$, such that $\widetilde{f}_t:=f_{t,1}$ is $A$-directed . Moreover, given a core $K\subset V$, the isotopy $f_{t,s}$ can be chosen arbitrarily $C^0$-close to $f_t$ on $Op(K)$.
\end{thm}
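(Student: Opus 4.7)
The plan is to follow the classical Gromov strategy for $h$-principles on open manifolds, with the three ingredients being the core-shrinking property, the approximate integration theorem (Theorem \ref{thm4.4.1}), and the openness of $A \subset \mathrm{Gr}_n W$. Throughout, the parameter $t \in [0,1]$ plays the role of an extra parameter, and everything will be done relative to $\{0,1\}$ since $f_0$ and $f_1$ are assumed to be genuinely $A$-directed (so the interpolation in $s$ can be taken constant at the endpoints).

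First I would localize the problem near the core $K$. By the definition of a core, there exists a diffeotopy $\chi^{\sigma}: V \to V$, $\sigma \in [0,1]$, with $\chi^{0}=\mathrm{Id}$ and $\chi^{1}(V)$ contained in an arbitrarily small neighbourhood of $K$. The point is that precomposing an embedding with a diffeomorphism of the source reparametrizes the Gauss map but does not affect whether the image lies in $A$; hence it suffices to construct the desired deformation on a neighbourhood of $K$ and then pull back along $\chi^{1}$.

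Second, apply Theorem \ref{thm4.4.1} to the isotopy of tangential homotopies $(f_t, G_{t,s})$ with the given polyhedron $K \subset V$. This yields small diffeotopies $h^{\tau}_t$ and an isotopy of embeddings $\widetilde{f}_{t,s}: Op(h^{1}_t(K)) \to W$, $\mathrm{rel}\,\{0,1\}$, with $\widetilde{f}_{t,0} = f_t|_{Op(h^1_t(K))}$, such that the Gauss maps $Gd\widetilde{f}_{t,s}$ are $\varepsilon$-close to $G_{t,s}|_{Op(h^{1}_t(K))}$. Since $G_{t,1}$ takes values in the \emph{open} subset $A$ and the parameter space is compact, picking $\varepsilon$ small enough forces $Gd\widetilde{f}_{t,1}(Op(h^{1}_t(K))) \subset A$. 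Thus on a neighbourhood of $h^{1}_t(K)$ we have an $A$-directed embedding close to $f_t$, which at $t=0,1$ is equal to $f_0,f_1$ respectively.

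Finally, I would use the core-shrinking isotopy to spread this local $A$-directed embedding over all of $V$. Pick a diffeotopy $\varphi^{\sigma}_t$ of $V$ (depending continuously on $t$, equal to the identity for $t=0,1$ up to reparametrization, and equal to the identity on $Op(K)$) with $\varphi^{1}_t(V) \subset Op(h^{1}_t(K))$; such a diffeotopy exists because $h^{1}_t(K)$ is still a core since $h^{1}_t$ is a small diffeomorphism. Then define $f_{t,s}$ by concatenating, in the $s$-direction, the family $\widetilde{f}_{t,s}\circ\varphi^{1}_t$ with the precomposition homotopy $\widetilde{f}_{t,0} \circ \varphi^{\sigma}_t = f_t \circ \varphi^{\sigma}_t$ that connects $f_t$ to $f_t\circ\varphi^{1}_t$. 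After a smooth reparametrization of $s \in [0,1]$ this produces an isotopy of embeddings of all of $V$, starting at $f_t$ and ending at an $A$-directed embedding, and the whole construction stays $C^0$-close to $f_t$ on $Op(K)$ because $h^{\tau}_t$ and the perturbation from Theorem \ref{thm4.4.1} are as small as we wish, and $\varphi^{\sigma}_t$ is the identity on $Op(K)$.

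The main obstacle is the last step: one must stitch the $A$-directed modification defined only near $h^1_t(K)$ to the original embedding $f_t$ on the rest of $V$, maintaining the $\mathrm{rel}\,\{0,1\}$ condition and openness into $A$ throughout the $s$-homotopy. The trick is that this extension is achieved not by modifying $\widetilde{f}_{t,s}$ away from $h^1_t(K)$ (which would be genuinely hard), but rather by \emph{compressing the source} via the core isotopy $\varphi^{\sigma}_t$ so that the entire manifold maps into the region where the $A$-directed embedding has already been built; the openness of $A$ combined with the fact that $Gd(\widetilde{f}_{t,1}\circ\varphi^{1}_t) = Gd\widetilde{f}_{t,1}\circ d\varphi^{1}_t$ (which lands in the same $A$-subset by naturality) is what makes this compression harmless.
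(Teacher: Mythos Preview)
Your proposal is correct and follows essentially the same strategy as the paper: apply Theorem~\ref{thm4.4.1} to obtain $\widetilde{f}_{t,s}$ on $Op(h^1_t(K))$, use openness of $A$ to guarantee $Gd\widetilde{f}_{t,1}\subset A$, and then compress $V$ into $Op(h^1_t(K))$ via the core isotopy $\varphi^s_t$. The only cosmetic difference is that the paper writes the answer in one shot as $f_{t,s}=\widetilde{f}_{t,s}\circ\varphi^s_t$ (letting $s$ vary simultaneously in both factors, and noting that $h^\tau_t\equiv\mathrm{Id}$ near $t=0,1$ so the construction is automatically $\mathrm{rel}\,\{0,1\}$), whereas you reach the same endpoint by concatenating the source-compression $f_t\circ\varphi^\sigma_t$ with $\widetilde{f}_{t,s}\circ\varphi^1_t$.
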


\begin{proof}
    Using the Theorem \ref{thm4.4.1} we can approximate $G_{t,s}$ along $h^1_t(K)$ by an isotopy $\widetilde{f}_{t,s}:Op(h^1_s(K))\rightarrow W$. Since $A$ is open subset, for a sufficiently close approximation the image $Gd\widetilde{f}_{t,1}(V)$ belongs to $A$. Additionally, we can assume that $h^{\tau}_t\equiv\mathrm{Id}$ for $t$ near $0$ and $1$ (because $f_0$ and $f_1$ are $A$-directed). Using the fact that $K$ is a core, there exists a family of isotopies $\varphi^s_t:V\rightarrow V$ such that $\varphi^1_t(V)=Op(h^1_t(K))$. We finish the proof by defining $f_{t,s}=\widetilde{f}_{t,s}\circ\varphi^s_t$.
\end{proof}

\begin{rem}
    The relative version for $(V,V_0)$ is false in general, but is true if $\mathrm{Int}V\setminus Op(V_0)$ has no compact connected components. The following version of the above theorem will be useful in proving $h$-principle for isocontact embeddings.
\end{rem}

\begin{thm}[\cite{EM02} Theorem 4.5.2.]\label{thm4.5.2}
    Let $A\subset\mathrm{Gr}_n W$ be an open subset, and $(f_t,F_{t,s})$ a formal isotopy of $A$-directed embeddings such that $f_0$ and $f_1$ are $A$-directed and $F_{t,0}\equiv df_0,\,F_{t,1}\equiv df_1$. Then there exists an isotopy of embeddings $f_{t,s}$ $rel\,\{0,1\}$ such that  $\widetilde{f}_t:=f_{t,1}$ is $A$-directed, and $F_{t,1}$ is homotopic to $d\tilde{f}_t$ through isotopy of monomorphisms $\widetilde{F}_{t,s}:TV\rightarrow TW,\,\mathrm{bs}\widetilde{F}_{t,s}=f_{t,s}$ with $G\widetilde{F}_{t,s}(V)\subset A$ for all $t,s\in[0,1]$.
\end{thm}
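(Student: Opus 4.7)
The plan is to deduce the statement from Theorem \ref{thm4.5.1} by first passing to Gauss maps and then reconstructing the required homotopy of monomorphisms, exploiting openness of $A$ and the quantitative freedom in Theorem \ref{thm4.4.1}.

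First, I set $G_{t,s} := G F_{t,s} : V \to \mathrm{Gr}_n W$. This is a formal $A$-directed isotopy of tangential homotopies: it is $A$-directed at $s = 1$ because $GF_{t,1}(V) \subset A$, it agrees with $Gdf_t$ at $s = 0$ because $F_{t,0} = df_t$, and it is constant in $s$ at $t \in \{0,1\}$ by the hypotheses $F_{0,s} \equiv df_0$ and $F_{1,s} \equiv df_1$. Applying Theorem \ref{thm4.5.1} I obtain an isotopy of embeddings $f_{t,s}$, rel $t \in \{0,1\}$, with $\widetilde f_t := f_{t,1}$ being $A$-directed. By inspecting the proof of \ref{thm4.5.1} through \ref{thm4.4.1} I can moreover arrange that, for any prescribed $\varepsilon>0$, (i) $f_{t,s}$ is pointwise $\varepsilon$-close to $f_t$ on all of $V$, and (ii) $Gdf_{t,1}$ is uniformly $\varepsilon$-close to $GF_{t,1}$ on $V$.

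Next I build $\widetilde F_{t,s}$ covering $f_{t,s}$ in two halves. Fix a Riemannian metric on $W$ and let $P_{t,s}$ denote parallel transport of tangent spaces along the paths $s \mapsto f_{t,s}(x)$. For $s \in [0,\tfrac{1}{2}]$, define $\widetilde F_{t,s} := P_{t,2s} \circ F_{t,1}$. This is a monomorphism covering $f_{t,2s}$, equal to $F_{t,1}$ at $s = 0$ and to $\bar F_t := P_{t,1} \circ F_{t,1}$ at $s = \tfrac{1}{2}$, where $\bar F_t$ covers $\widetilde f_t$ and has Gauss map that is $O(\varepsilon)$-close to $GF_{t,1} \in A$, hence still in $A$. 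For $s \in [\tfrac{1}{2},1]$, both $\bar F_t$ and $d\widetilde f_t$ cover the same embedding $\widetilde f_t$ and have Gauss maps in a small neighbourhood of $GF_{t,1}$ inside $A$. I connect them fiberwise by first choosing a short path of $n$-planes in $A$ joining their Gauss maps, and then lifting to a path of monomorphisms using the local triviality of the bundle $\mathrm{Mono}(TV, TW) \to \mathrm{Gr}_n W$ (staying in a single connected component of each fiber since the two endpoints are close). Concatenating the two halves produces $\widetilde F_{t,s}$, which inherits the rel-$\{0,1\}$ property from the hypotheses at $t \in \{0,1\}$.

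The main obstacle is maintaining $A$-directedness throughout the entire interpolation. Parallel transport along $f_{t,s}$ in the first half and the Gauss-map interpolation in the second half both perturb the plane field in $\mathrm{Gr}_n W$, and it is only openness of $A$ that keeps the outputs inside. Each perturbation can be bounded in terms of the $C^0$-size of the isotopy $f_{t,s}$ on $\mathrm{Op}(K)$ and the approximation quality guaranteed by Theorem \ref{thm4.4.1}. A compactness argument over the core $K$ and the unit square in $(t,s)$ then lets me pick $\varepsilon>0$ small enough that every Gauss map appearing in the construction lies in $A$, completing the proof.
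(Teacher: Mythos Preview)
Your overall strategy matches the paper's: apply Theorem \ref{thm4.5.1} to obtain the $A$-directed isotopy $\widetilde f_t$, then use openness of $A$ and the approximation from Theorem \ref{thm4.4.1} to build the homotopy of monomorphisms. However, the second half of your construction has a genuine gap.

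The problem is the parenthetical claim that $\bar F_t$ and $d\widetilde f_t$ lie ``in a single connected component of each fiber since the two endpoints are close.'' You have only arranged that the \emph{Gauss maps} $G\bar F_t$ and $Gd\widetilde f_t$ are close; this does not imply the monomorphisms themselves are close, nor that they lie in the same component of the fiber of $\mathrm{Mono}(TV,TW)\to\mathrm{Gr}_nW$, which is a $GL(n)$-torsor with two components. For a concrete failure, take $V$ open in $\mathbb{R}$, $W=\mathbb{R}^2$, $f_t(x)=(x,0)$ constant in $t$, $F_{t,s}(v)=(\cos(\pi s)\,v,\sin(\pi s)\,v)$, and $A=\mathrm{Gr}_1\mathbb{R}^2$. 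Then one may take $f_{t,s}\equiv f_t$, so $\bar F_t=F_{t,1}$ sends $v\mapsto(-v,0)$ while $d\widetilde f_t=df_t$ sends $v\mapsto(v,0)$: identical Gauss map, opposite fiber components. No short path of $n$-planes repairs this.

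What is missing is exactly the piece the paper invokes: not merely that $Gd\widetilde f_t$ is close to $GF_{t,1}$, but that the \emph{entire} family $Gdf_{t,s}$ is $\varepsilon$-close to $GF_{t,s}$ for all $s$ (this is the actual output of Theorem \ref{thm4.4.1}). Since $df_{t,s}$ and $F_{t,s}$ are both paths of monomorphisms starting at the \emph{same} monomorphism $df_t=F_{t,0}$, and their Gauss maps stay uniformly close throughout, the concatenation $(F_{t,\cdot})^{-1}\ast df_{t,\cdot}$ already provides a path of monomorphisms from $F_{t,1}$ to $d\widetilde f_t$ whose Gauss image can be homotoped rel endpoints into $A$. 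Your parallel-transport construction only moves the single endpoint $F_{t,1}$ and never uses $F_{t,s}$ for $s<1$, which is why the fiber-component obstruction is invisible to your argument.
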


\begin{proof}
    Let $f_{t,s}$ be an isotopy constructed in Theorem \ref{thm4.4.1}. It is sufficent to construct a homotopy $\Psi_{t,s}$ between $F_{t,1}|_{Op(h^1_t(K))}$ and $d\widetilde{f}_t$. Subset $A$ is open, and since $GF_{t,1}|_{Op(h^1_t(K))}$ is $C^0$-close to $Gd\widetilde{f}_t$, and moreover the whole isotopy $GF_{t,s}|_{Op(h^1_t(K))}$ is $C^0$-close to $Gdf_{t,s}$, we conclude that there exists a homotopy $G_{t,s}:V\rightarrow A,\,\mathrm{bs}G_{t,s}=f_{t,s}$ between $GF_{t,1}|_{Op(h^1_t(K))}$ and $Gd\widetilde{f}_t$. Now we can define $\Psi_{t,s}$ via $G\Psi_{t,s}=G_{t,s}$.
\end{proof}

\subsection{$h$-principle for isocontact embeddings}

Let $(V,\xi_V)$ and $(W,\xi_W)$ be contact manifolds such that $V$ is open and $n=\mathrm{dim}(V)\leq\mathrm{dim}(W)-2$. A \textbf{formal isocontact embedding} is a pair $(f,F_s)$, where $f:V\rightarrow W$ is an embedding, and $F_s:TV\rightarrow TW$ is a homotopy of monomorphisms covering $f$, so that $F_0=df$ and $F_1$ is an isocontact monomorphism.

\begin{thm}[\cite{EM02} Theorem 12.3.1.]\label{thm12.3.1}
    Let $(f_t,F_{t,s})$ be an isotopy of formal isocontact embeddings between isocontact embeddings $f_0$ and $f_1$. Then $f_t$ can be isotoped ($rel\,\{0,1\}$) to an isocontact isotopy $\tilde{f}_t$, such that $d\tilde{f}_t$ is homotopic to $F_{t,1}$ through isocontact monomomorphisms. Moreover, given a core $V_0\subset V$ one can choose the isotopy $\widetilde{f}_t$ to be arbitrarily $C^0$-close to $f_0$ in $Op(V_0)$.
\end{thm}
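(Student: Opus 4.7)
The plan is to reduce Theorem \ref{thm12.3.1} to the $h$-principle for $A$-directed embeddings (Theorem \ref{thm4.5.2}) combined with the $h$-principle for contact structures on open manifolds (Theorem \ref{thm10.3.2}) and Gray's stability theorem. The key observation is that being isocontact factors into two conditions: the embedding's tangent planes should be \emph{contact} in $W$ (so that the pullback of $\xi_W$ defines a contact structure on $V$), and this induced contact structure should coincide with the given $\xi_V$. The first is an open tangency condition amenable to Theorem \ref{thm4.5.2}; the second will be corrected by deforming within the space of contact structures on the open manifold $V$.

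For the first step, let $A\subset\mathrm{Gr}_n W$ be the open subset consisting of $n$-planes $\tau\subset T_wW$ such that $\tau\cap\xi_w$ has codimension one in $\tau$ and inherits a non-degenerate conformally symplectic form from $\mathrm{CS}(\xi_W)$. An embedding $f:V\to W$ is $A$-directed precisely when $f^*\xi_W$ is a contact structure on $V$, and in particular every isocontact embedding is $A$-directed. Since $F_{t,1}$ is isocontact for each $t$, it lies in $A$, so Theorem \ref{thm4.5.2} applies to the isotopy $(f_t,F_{t,s})$ rel $\{0,1\}$. It produces an isotopy $f_{t,s}$ rel $\{0,1\}$ such that $\tilde f_t:=f_{t,1}$ is $A$-directed, together with a homotopy of $A$-directed monomorphisms $\tilde F_{t,s}:TV\to TW$ covering $f_{t,s}$ and linking $d\tilde f_t$ to $F_{t,1}$. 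Moreover, on a neighborhood of a chosen core $V_0\subset V$, the family $f_{t,s}$ is $C^0$-close to $f_t$.

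For the second step, set $\eta_{t,s}:=\tilde F_{t,s}^{-1}(\xi_W)$; since $\tilde F_{t,s}$ is $A$-directed, each $\eta_{t,s}$ is an almost contact structure on $V$, equal to the genuine contact structure $\xi_t:=d\tilde f_t^{-1}(\xi_W)$ at $s=0$ and to $\xi_V$ at $s=1$, with constant value $\xi_V$ at $t\in\{0,1\}$. By the relative parametric form of Theorem \ref{thm10.3.2}, these paths of almost contact structures can be deformed (rel $s\in\{0,1\}$ and rel $t\in\{0,1\}$) into paths $\zeta_{t,s}$ of genuine contact structures on $V$ with $\zeta_{t,0}=\xi_t$ and $\zeta_{t,1}=\xi_V$. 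Gray stability, applied to the $s$-path $\zeta_{t,s}$ on a relatively compact neighborhood of $V_0$ and cut off outside, yields an ambient isotopy $g_t^s:V\to V$ with $g_t^0=\mathrm{Id}$ and $(g_t^s)^*\zeta_{t,s}=\xi_t$. The composition $\hat f_t:=\tilde f_t\circ g_t^1$ then satisfies $\hat f_t^*\xi_W=(g_t^1)^*\xi_t=\xi_V$ with the induced map on $\xi_V$ conformally symplectic, so $\hat f_t$ is the desired isocontact isotopy. Concatenating the homotopy of monomorphisms coming from $g_t^s$ with $\tilde F_{t,s}$ produces the required homotopy of isocontact monomorphisms from $d\hat f_t$ to $F_{t,1}$.

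The principal obstacle lies in the third step: Gray's theorem is classically formulated on compact manifolds, whereas $V$ is open. To handle this I would first arrange, using a cutoff combined with the openness hypothesis and the fact that $V_0$ is a core, that the $\zeta_{t,s}$ differ from $\xi_V$ only inside a precompact neighborhood $U\supset V_0$, and then integrate the Gray vector field on $U$. A secondary subtlety is preserving the $C^0$-closeness to $f_0$ along $\mathrm{Op}(V_0)$ throughout: the Gray isotopy must be shown to be $C^0$-small there, which follows by choosing the homotopy $\zeta_{t,s}$ itself to be $C^0$-small on $\mathrm{Op}(V_0)$, using the $C^0$-approximation built into Theorems \ref{thm4.5.2} and \ref{thm10.3.2}. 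Finally, careful bookkeeping is needed to ensure that each intermediate homotopy of monomorphisms indeed passes through isocontact ones, so that the concluding statement about homotopy-through-isocontact-monomorphisms holds.
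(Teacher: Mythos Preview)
Your outline is essentially the paper's: define the open set $A_{\mathrm{cont}}\subset\mathrm{Gr}_nW$ of contact $n$-planes, apply Theorem~\ref{thm4.5.2} to make $f_t$ an $A_{\mathrm{cont}}$-directed isotopy $\tilde f_t$ with $d\tilde f_t$ homotopic to $F_{t,1}$ through $A_{\mathrm{cont}}$-monomorphisms, invoke Theorem~\ref{thm10.3.2} to connect $\tilde f_t^*\xi_W$ to $\xi_V$ through genuine contact structures, and finish with a Gray-type correction. The paper packages this last step as a separate Lemma~12.3.2 (stated for $V$ compact with boundary, so Gray applies directly and your cut-off discussion is unnecessary in that setting).

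One slip to fix: with your convention $(g_t^s)^*\zeta_{t,s}=\xi_t$ you obtain $(g_t^1)^*\xi_V=\xi_t$, hence $\hat f_t:=\tilde f_t\circ g_t^1$ gives $\hat f_t^*\xi_W=(g_t^1)^*\xi_t$, which is \emph{not} $\xi_V$ in general. You should set $\hat f_t=\tilde f_t\circ (g_t^1)^{-1}$ (equivalently, run Gray from $\xi_V$ to $\xi_t$) so that $\hat f_t^*\xi_W=((g_t^1)^{-1})^*\xi_t=\xi_V$.
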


\begin{proof}
    For each $x\in W$, we denote by $A_x\subset\mathrm{Gr}_{n}(T_{x}W)$ the subset of $n$-dimensional linear subspaces $S\subset T_{x}W$, such that $\mathrm{dim}(S\cap(\xi_W)_x)=n-1$ and $S\cap(\xi_W)_x$ is a symplectic subspace of $(\xi_W)_x$ with respect to the conformal symplectic structure $\mathrm{CS}(\xi_W)$. We define $A_{\mathrm{cont}}=\bigcup_{x\in W}A_x\subset\mathrm{Gr}_nW$. Note that $A_{\mathrm{cont}}$ is an open subset, and note that monomorphism $F:TV\rightarrow TW$ is contact if and only if $GF(V)\subset A_{\mathrm{cont}}$.\\
    
    Formal isocontact isotopy $(f_t,F_{t,s})$ lits to a tangential isotopy $(f_t,GF_{t,s})$ such that $GF_{t,1}(V)\subset A_{\mathrm{cont}}$. Since $A_{\mathrm{cont}}$ is open, according to Theorem \ref{thm4.5.1} $f_t$ is isotopic ($rel\,\{0,1\}$) to an isotopy of \textit{contact embeddings} (=$A_{\mathrm{cont}}$-directed embeddings) $\widetilde{f}_t:V\rightarrow W$. Since $d\tilde{f}_t$ is homotopic to $F_{t,1}$, we can use Theorem \ref{thm4.5.2} to additionally arrange that $d\tilde{f}_t$ and $F_{t,1}$ are homotopic via a homotopy $\Psi_{t,s}:TV\rightarrow TW$ for which $G\Psi_{t,s}(V)\subset A_{\mathrm{cont}}$. This implies that the contact structures $F_t^{-1}(\xi_W)=\xi_{V}$ and $d\widetilde{f}^*_t\xi_W$ are homotopic in $\mathcal{S}^+_{\mathrm{cont}}$, therefore Theorem \ref{thm10.3.2} implies the existence of a family of contact structures $\xi_{t,s}$ which connect $\xi_V$ and $d\tilde{f}^*_t\xi_{W}$.
    
    \begin{lemma}[\cite{EM02} Lemma 12.3.2.]\label{lem12.3.2.}
        Let $V$ be a compact manifold with boundary, $\xi_{t,s}$ be a family of contact structures, and $f_{t}:(V,\xi_{0,t})\rightarrow (W,\xi_W)$ an isotopy of isocontact embeddings. Then, there exists a contact isotopy $f_{t,s}:V\rightarrow W$ such that $f_{t,0}=f_t$ and $f_{t,1}:(V,\xi_{t,1})\rightarrow (W,\xi_W)$ is an isotopy of isocontact embeddings.
    \end{lemma}
\end{proof}

\subsection{$h$-principle for subcritical isotropic embeddings}

Let $(W,\xi_{W})$ be a contact manifold, and $V$ a compact manifold with boundary of subcirical dimension, i.e. $\mathrm{dim}(V)\leq (\mathrm{dim}(W)-1)/2$. A \textbf{formal isotropic embedding} is a pair $(f,F_s)$, where $f:V\rightarrow W$ is an embedding, and $F_s:TV\rightarrow TW$ is a homotopy of monomorphisms covering $f$, so that $F_0=df$ and $F_1(TV)\subset\xi_W$. 

\begin{thm}[\cite{EM02} Theorem 12.4.1.]\label{thm12.4.1}
    Let $f_0,f_1:V\rightarrow(W,\xi_W)$ be isotropic embeddings, which are isotopic through formal isotropic embeddings $(f_t,F_{t,s})$. Then $f_0$ and $f_1$ are isotopic through the isotopy of isotropic embeddings $\widetilde{f}_t$, so that $d\widetilde{f}_t$ and $F_{t,1}$ are homotopic through isotropic monomorphisms.
\end{thm}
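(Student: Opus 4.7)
The plan is to deduce Theorem \ref{thm12.4.1} from the isocontact $h$-principle (Theorem \ref{thm12.3.1}) by passing from the isotropic manifold $V$ to a ``thickening'' that is a contact manifold sitting inside $W$ in codimension at least~$2$. Write $k=\dim V$ and $2n+1=\dim W$, so that by the subcritical assumption $k<n$. The naive attempt to apply Theorem \ref{thm4.5.1} directly with $A=A^{\mathrm{iso}}\subset \mathrm{Gr}_k W$ the isotropic $k$-planes fails because this subset is not open. This forces us to work instead with the open subset of contact $(2k+1)$-planes and to promote the isotropic data to isocontact data.

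\textbf{Step 1: thickening.} Regard $V$ as the zero section of the 1-jet bundle $J^1V=T^*V\times\mathbb{R}$, a contact manifold of dimension $2k+1$ with the standard contact form $dz-\lambda_{\mathrm{can}}$. The subcritical condition gives $2k+1\le 2n-1=\dim W-2$, which is exactly the dimensional hypothesis that makes Theorem \ref{thm12.3.1} applicable to isocontact embeddings $J^1V\hookrightarrow W$. By the isotropic neighborhood theorem in contact geometry, each isotropic embedding $f_i:V\hookrightarrow W$ ($i=0,1$), together with a trivialization of its conformal symplectic normal bundle in a model symplectic bundle $\mathcal{N}$ of rank $2(n-k)$, extends to an isocontact embedding $\widetilde f_i:Op(V_0)\subset J^1V\oplus\mathcal{N}\hookrightarrow W$ of a neighborhood of the zero section~$V_0$. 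Shrinking $\mathcal{N}$ to a Darboux ball if necessary, the target of the extended embedding is a contact manifold of dimension $2n+1$, but we only need to track the $(2k+1)$-dimensional slice $J^1V\times\{0\}\subset J^1V\oplus\mathcal{N}$ in order to feed Theorem \ref{thm12.3.1}.

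\textbf{Step 2: lifting the formal data.} The formal isotropic isotopy $(f_t,F_{t,s})$ must now be promoted to a formal isocontact isotopy of the thickening. At each $(t,s)$, we need to extend $F_{t,s}:TV\to TW$ to a monomorphism $\widetilde F_{t,s}:T(J^1V)|_V\to TW$ so that at $s=1$ the extension is isocontact, i.e.\ pulls $\xi_W$ back to the standard contact hyperplane field on $J^1V$ conformally. This amounts to adding $k+1$ extra directions per point: a ``Reeb'' direction transverse to $\xi_W$, and a Lagrangian complement in the symplectic quotient $F_{t,1}(TV)^\omega/F_{t,1}(TV)$ of $\xi_W$. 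The subcritical condition is exactly what guarantees that this symplectic quotient has positive rank and that the space of Lagrangian complements at each point is contractible, so the extensions can be made parametrically in $(t,s)$ and smoothly along $V$. At $s=0$ we want the extension to agree with the differential of some genuine embedding of $Op(V_0)\subset J^1V$; this can be achieved by a further small perturbation using the fact that $Op(V_0)$ is an open manifold.

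\textbf{Step 3: invoking the isocontact $h$-principle and restricting.} The formal isocontact isotopy so constructed interpolates between the two honest isocontact embeddings $\widetilde f_0,\widetilde f_1$ of Step~1. Theorem \ref{thm12.3.1}, applied with core $V_0\subset J^1V$, produces a genuine isocontact isotopy $\Phi_t:Op(V_0)\to W$ rel $\{0,1\}$, together with a homotopy of $d\Phi_t$ to the lifted $\widetilde F_{t,1}$ through isocontact monomorphisms. Restricting everything to the zero section $V_0\cong V$ yields an isotropic isotopy $\widetilde f_t=\Phi_t|_V:V\to W$ rel $\{0,1\}$ connecting $f_0$ to $f_1$, whose differential $d\widetilde f_t$ is homotopic (via the restriction of the isocontact monomorphism homotopy) to $F_{t,1}$ through isotropic monomorphisms, as required.

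\textbf{Main obstacle.} The delicate point is Step~2: one needs a coherent parametric extension of the isotropic formal data to isocontact formal data that is consistent at the boundary values $t=0,1$ (where $\widetilde f_0,\widetilde f_1$ are already fixed) and at $s=0$ (where one must match the differential of a geometric embedding of $Op(V_0)$). The subcritical hypothesis $k<n$ is used crucially here and only here: it ensures that the fiber of choices at each point is the space of Lagrangian subspaces of a nontrivial symplectic vector space, hence nonempty and connected enough to build the extension by obstruction theory on $[0,1]^2\times V$. Once this extension is in place, the rest of the argument is purely formal application of Theorem \ref{thm12.3.1} and restriction.
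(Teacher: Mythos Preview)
Your strategy coincides with the paper's: thicken $V$ to $J^1V$, promote the formal isotropic data to formal isocontact data (the paper simply asserts that any isotropic monomorphism $TV\to TW$ extends ``in a homotopically canonical way'' to an isocontact monomorphism $T(J^1V)\to TW$), apply Theorem~\ref{thm12.3.1} using $\dim J^1V=2k+1\le\dim W-2$, and restrict to the zero section.

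That said, your justification of Step~2 and the ``Main obstacle'' paragraph are muddled, and you have misplaced where the subcritical hypothesis enters. Extending $F_{t,1}$ to an isocontact monomorphism of $T(J^1V)$ does \emph{not} amount to choosing a Lagrangian in the symplectic reduction $L^\omega/L$ (with $L=F_{t,1}(T_pV)$); what is actually needed is an isotropic complement $L'$ to the coisotropic $L^\omega$ inside $\xi_W$, so that $L\oplus L'$ is a symplectic $2k$-plane in which $L$ and $L'$ are Lagrangian. One obtains $L'=JL$ from any compatible almost complex structure $J$ on $\xi_W$, and since the space of such $J$ is contractible the extension is homotopically canonical---with no restriction on $k$. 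In particular, your assertion that ``the space of Lagrangian complements at each point is contractible'' is false as written (the Lagrangian Grassmannian $U(m)/O(m)$ is never contractible for $m\ge 1$), and the subcritical hypothesis $k<n$ plays no role in Step~2. It is used precisely where you already recorded it in Step~1: to guarantee the codimension-two inequality $\dim J^1V\le\dim W-2$ required by Theorem~\ref{thm12.3.1}. Your claim that subcriticality is used ``crucially here and only here'' (meaning Step~2) therefore has it backwards.
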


\begin{rem}
    The theorem also holds in the relative and $C^0$-dense forms.
\end{rem}

\begin{proof}
    Any isotropic homomorphism $F:TV\rightarrow TW$ extends in a homotopically canonical way to an isocontact homomorphism $\widetilde{F}:T(J^1(V))\rightarrow TW$, where $J^1(V)$ is endowed with the canonical contact structure. Therefore we can extend isotopy of formal isotropic embeddings to an isotopy of formal isocontact embeddings. The subcritical dimensional condition ensures that $\mathrm{dim}\,J^1(V)\leq\mathrm{dim}\,W-2$, and hence we can apply Theorem \ref{thm12.3.1}. Then we get the required isotropic embeddings by restricting to the $0$-section of the constructed isocontact embeddings.
\end{proof}

\newpage
\bibliographystyle{apacite}

\end{document}